\documentclass[10pt,a4paper]{article} 
\usepackage{amsmath,amssymb,amsthm,amsfonts,amscd,euscript,verbatim, t1enc, newlfont, graphicx, cancel}
\usepackage{hyperref}
\usepackage[all]{xy}
\providecommand{\keywords}[1]{\textbf{\textit{Key words and phrases }} #1}
\providecommand{\subjclass}[1]{\textbf{\textit{2020 Mathematics Subject Classification.}} #1}

\hfuzz2pt
\theoremstyle{definition}
\newtheorem{theo}{Theorem}[subsection]

\newtheorem{theore}{Theorem}[section]
\newtheorem{pr}[theo]{Proposition}

 \newtheorem{lem}[theo]{Lemma}
 \newtheorem{coro}[theo]{Corollary}
  
  \newtheorem{ass}[theo]{Assumption}
\theoremstyle{remark}
\newtheorem{rema}[theo]{Remark}

\newtheorem{rrema}[theore]{Remark}
\theoremstyle{definition}
\newtheorem{defi}[theo]{Definition}
\newtheorem{defin}[theore]{Definition}

\numberwithin{equation}{subsection}

\newcommand\tu{\mathcal{T}}

\newcommand\dmu{\mathcal{D}}
\newcommand\emu{\mathcal{E}}

\newcommand\xu{\mathcal{X}}
\newcommand\ywu{\mathcal{W}}
\newcommand\yu{\mathcal{Y}}
\newcommand\tux{\mathcal{D}_p}
\newcommand\tub{\mathcal{D}^b}
\newcommand\dux{\mathcal{D}_Q}
\newcommand\tup{\mathcal{D}^u}

\newcommand\au{\underline{A}}

\newcommand\bu{\underline{B}}

\newcommand\homm{\operatorname{Hom}}
\newcommand\obj{\operatorname{Obj}}

\newcommand\id{\operatorname{id}}
\DeclareMathOperator\adfu{\operatorname{Fun}_{\z}}
\DeclareMathOperator\adfur{\operatorname{Fun}_R}

\DeclareMathOperator\imm{\operatorname{Im}}
\DeclareMathOperator\co{\operatorname{Cone}}

\DeclareMathOperator\prli{\varprojlim}
\DeclareMathOperator\inli{\varinjlim}
\DeclareMathOperator\hinli{\underrightarrow{\operatorname{hocolim}}}

\newcommand\hrt{{\underline{Ht}}}

\newcommand\spe{\operatorname{Spec}}
\newcommand\modd{\operatorname{Mod}}
\newcommand\mmodd{\operatorname{mod}}

\newcommand\q{{\mathbb{Q}}}

\newcommand\z{{\mathbb{Z}}}

\newcommand\al{\alpha}

\newcommand\ns{\{0\}}
\newcommand\ab{\operatorname{Ab}}

\newcommand\cp{\mathcal{P}}
\newcommand\perpp{{}^{\perp}}
\newcommand\opp{^{op}}

\newcommand\kmp{\mathbb{K}_m(\operatorname{Proj}}
\newcommand\lo{\mathcal{LO}}
\newcommand\ro{\mathcal{RO}}

\newcommand\llo{\le_{\mathcal{LO}}}


\begin{document}

\title{
 Producing "new" semi-orthogonal decompositions 
out of "old" ones   in arithmetic geometry}
\author{Mikhail V. Bondarko
   \thanks{ 
 The main results of the paper were  obtained under support by the Leader (Leading scientist Math) grant no. 22-7-1-13-1 and by the Russian Science Foundation grant no. 20-41-04401.}}\maketitle
\begin{abstract} 
This paper is devoted to constructing new admissible subcategories and semi-orthogonal decompositions of triangulated categories out of old ones. 
 For two 
  triangulated subcategories $\tu$ and $\tu'$ of a certain $\dmu$ and a decomposition $(\ro,\lo)$ of $\tu$ we look either for a decomposition $(\ro',\lo')$ of $\tu'$ such that there are no non-zero $\dmu$-morphisms from $\ro$ into $\ro'$ and from $\lo$ into $\lo'$, or for a decomposition  $(\ro_{\dmu},\lo_{\dmu})$ of  $\dmu$ such that $\ro_{\dmu}\cap \tu=\ro$ and $\lo_{\dmu}\cap \tu=\lo$.
 We prove some general existence statements (that also extend to semi-orthogonal decompositions 
  of arbitrary length) and apply them to various derived categories of coherent sheaves over a scheme $X$ that is proper over the spectrum of a Noetherian ring $R$. This gives a one-to-one correspondence between semi-orthogonal decompositions of $D_{perf}(X)$ and $D^b(\operatorname{coh}(X)) $; the latter extend to $D^-(\operatorname{coh}(X))$, 
 $D^+_{coh}(\operatorname{Qcoh}(X))$,   $D_{coh}(\operatorname{Qcoh}(X))$, and  $D(\operatorname{Qcoh}(X))$  under 
  very mild 
   assumptions. In particular, we obtain a vast generalization 
   of a theorem of J. Karmazyn, A. Kuznetsov, and E. Shinder. 
  
  These applications rely on recent results of Neeman that express $D^b(\operatorname{coh}(X))$ and $D^- (\operatorname{coh}(X))$ in terms of $D_{perf}(X)$. We also prove and apply a   
rather similar	 new theorem 
 that relates
   $D^+_{coh}(\operatorname{Qcoh}(X))$ and  $D_{coh}(\operatorname{Qcoh}(X))$ (these are certain modifications of the bounded below and the unbounded derived category of coherent sheaves on $X$) to homological functors $D_{perf}(X)\opp\to R-\mmodd$. 
    Moreover, we discuss  an application of this theorem to the construction of certain adjoint functors.
\end{abstract}
\subjclass{18G80, 14F08, 18A40 (Primary) 18E10, 14A15, 14G40 (Secondary)}

\keywords{Triangulated category, adjoint functor, semi-orthogonal decomposition,  admissible subcategory, coherent sheaves, perfect complexes, $t$-structure, weight structure.}

\tableofcontents
 \section{Introduction}
This paper is devoted to constructing "new" {\it semi-orthogonal decompositions} (see Definitions \ref{dsort}(1) and \ref{dmult}(2) below) of certain triangulated categories out of "old" ones; the relevance of this matter is discussed in Remark \ref{rvac}(1). 
 We prove some general existence statements and apply them to various 
triangulated subcategories of $D(\operatorname{Qcoh}(X))$ (see Definition \ref{dadm}(\ref{icoh})); we always assume  $X$ to be a scheme that is proper over the spectrum of a noetherian ring $R$. 
 These applications rely on the main results of \cite{neesat} and \cite{neetc} along with the new Theorem \ref{tnee2}. 

In this introduction we re-formulate some of the results 
 of the main body of the paper in terms of {\it admissible} subcategories (cf. Remark \ref{rdec1}(2) below). We suggest  the reader to check that these statements 
  can be obtained from their "decomposition" versions (see Theorem \ref{tone}) via straightforward applications of Proposition \ref{pdec}. 
 
We start from  some definitions.
In this paper all the subcategories we consider will be assumed to be strictly full.

\begin{defin}\label{dadm}

Let $\dmu$ be a triangulated category; assume that $\tu,\tu'$, and some $\tu_i$ are
    its (strictly 
   full) triangulated subcategories.  

\begin{enumerate}
\item\label{iadm}
 We say that 
$\tu$ is {\it left} (resp. {\it right) admissible} in $\dmu$ if the embedding $\tu\to \dmu$ admits a left (resp. right) adjoint.

$\tu$ is said to be {\it  admissible} in $\dmu$ if it is both left and right admissible in it.

\item\label{irest}
We write  
$\tu\cap{\tu'}$  for the 
subcategory   of $\dmu$ whose object class  equals $\obj \tu\cap \obj \tu'$.

Moreover, 
we 
 write  $(\tu_i)\cap{\tu'}$  for the family $(\tu_i\cap{\tu'})$. 

\item\label{imor}
Given an additive category $C$ and  $X,Y\in\obj C$  we will write $C(X,Y)$ for  the set of morphisms from $X$ to $Y$ in $C$.

Moreover,  for
$D,E\subset \obj \dmu$ we write $D\perp E$ if $\dmu(X,Y)=\ns$ for all $X\in D,\
Y\in E$.

\item\label{iperp}
We will write 
$\tu^\perp_{\tu'}$ 
for the 
 subcategory  
 of $\tu'$ whose object class is $$\{Y\in \obj \tu':\ \{X\}\perp \{Y\}\ \forall X\in \obj \tu\}.$$
Dually, we set the object class of the 
 subcategory ${}^{\perp}_{\tu'}{}\tu$ 
 to be \par\noindent
  $\{Y\in \obj \tu':\ \{Y\}\perp \{X\}\ \forall X\in \obj \tu\}$. 

Moreover, 
 we will write 
   $(\tu_i)^{\perp}_{\tu'}$ for the family 
   $(\tu_i{}^{\perp}_{\tu'})$. 

\item\label{icopr}
Assume that $\dmu$ is closed with respect to small coproducts.

 Then we will write $\tu^{\coprod}$ for the smallest 
  (strict) triangulated  subcategory 
   of $\dmu$ that is closed with respect to $\dmu$-coproducts and contains 
 $\tu$.

 Moreover, we will write $\tu^{\coprod}_{\tu'}$  (resp. $(\tu_i)^{\coprod}$ and  $({\tu_i})^{\coprod}_{\tu'}$) for the category $\tu^{\coprod}\cap {\tu'}$ (resp. for the families $(\tu_i^{\coprod})$ and  $(\tu_i^{\coprod}\cap {\tu'})$).

\item\label{imod} Throughout this paper $R$ will be a commutative unital  ring. 

We set $R-\mmodd\subset R-\modd$ to be the subcategory of finitely generated $R$-modules;  $S=\spe R$.

\item\label{icoh}
  Assume that a scheme $X$ 
  proper over $S$ 
  is fixed.  We 
 set $\tux= D_{perf}(X)\subset \tub= D^b_{coh}(\operatorname{Qcoh}(X))  \subset \dmu^-= D^-_{coh}(\operatorname{Qcoh}(X)) \subset \tup= D_{coh}(\operatorname{Qcoh}(X)) \subset \dux=D(\operatorname{Qcoh}(X))$; here $D_{perf}(X)\subset 
 \dux$ is  the 
  subcategory of perfect complexes on $X$ (cf. \cite[Tag 08CM]{stacksd}), and 
 a complex $N$ 
  (in $\dux$) belongs to $\tup$ whenever all its 
  cohomology sheaves $H^i(N)$ are coherent; it also belongs to $\tub$ (resp. $\dmu^-$) if we also have $H^i(N)=0$ for $i\gg 0$ and $i\ll 0$ (resp. for $i\gg 0$ only).  
   Moreover, we 
    consider  $\dmu^+= D^+_{coh}(\operatorname{Qcoh}(X))\subset  \tup$ that is defined similarly. We discuss these categories in Remark \ref{rk}(3) below; cf. also Remark \ref{rstacks}.


\item\label{iproj} We will say that $X$ is projective over $S=\spe R$ 
 if  $X$ is a closed subscheme of the projectivization $Y$ of a vector bundle $\mathcal{E}$ over $S$.

\item\label{irlin} All $R$-linear categories in this paper will be additive. For two $R$-linear categories $\au,\bu$ we will write $\adfur(\au,\bu)$ for the category 
 of $R$-linear functors $\au\to \bu$.
\end{enumerate}
 \end{defin}

\begin{rrema}
 Clearly, all the subcategories of $\dmu$ that we 
 describe in Definition \ref{dadm}(\ref{irest}--\ref{icopr}) 
 are triangulated; recall the strictness assumption. 

\end{rrema}

\begin{theore}\label{tadm}
Let $\xu$ be a    left (resp. right)  admissible subcategory of $\tux$ and $\ywu$ be a      left (resp. right)   admissible subcategory of $\tub$ (see Definition \ref{dadm}(\ref{icoh})).

1. Then the categories $\xu^{\perp}_{\tub}$, $\xu^{\perp}_{\dmu^-}$, and  $\xu^{\perp}_{\dux}$ are left (resp. right) admissible in $\tub$,  $\dmu^-$, and $\dux$, respectively.  

Moreover,  
   $\xu^{\coprod}_{\dux}$ is left (resp. right) admissible in $\dux$ as well, and $\xu^{\perp}_{\dux}=(\xu^{\perp}_{\tub})^{\coprod}$.


2.  Assume in addition that either  $X$ is regular of finite Krull dimension or that {\it regular alterations}\footnote{This assumption is very far from being restrictive; cf.  Remark \ref{rgeom}(1) below.} 
 exist for all 
 integral closed subschemes of $X$. 
Then    $\ywu$ equals ${\xu'}^{\perp}_{\tub}$ for some  left (resp. right)   
    admissible subcategory $\xu'$ of $\tux$.

Consequently, $\ywu^{\coprod}_{\dmu^-}$ and $\ywu^{\coprod}_{\dux}$ are  left (resp. right) admissible in  $\dmu^-$ and $\dux$, respectively. 

Moreover, the correspondence $\emu\mapsto \emu\cap \tux$ gives a one-to-one correspondence between   right admissible subcategories of $\tub$ and left admissible  subcategories of $\tux$.

3. Assume that   $X$ is projective over $S=\spe R$  (see Definition \ref{dadm}(\ref{iproj})).
 Then $\xu^{\perp}_{\tup}$ is left (resp. right) admissible in $\tup$ and $\xu^{\perp}_{\dmu^+}$  is  left (resp. right) admissible in $\dmu^+$.


4. 
Assume that $X$  satisfies the assumptions both of assertion 2 and of assertion 3.  Then 
 $\ywu^{\coprod}_{\tup}$ and  $\ywu^{\coprod}_{\dmu^+}$ are left (resp. right) admissible in $\tup$ and $\dmu^+$, respectively.
\end{theore}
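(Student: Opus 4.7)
The plan is to chain the earlier parts of the theorem. By assertion 2 applied to $\ywu$, there exists a left (resp. right) admissible subcategory $\xu'$ of $\tux$ with $\ywu={\xu'}^{\perp}_{\tub}$. Then the ``moreover'' part of assertion 1 applied to $\xu'$ yields the identification
$$\ywu^{\coprod}=({\xu'}^{\perp}_{\tub})^{\coprod}={\xu'}^{\perp}_{\dux}$$
as subcategories of $\dux$. This is the key step: it lets us replace a coproduct closure (which is hard to control directly) by an orthogonal (which behaves well under restriction to full subcategories).

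Intersecting this identity with $\tup$ and with $\dmu^+$, and using that both are full subcategories of $\dux$ containing $\xu'$, I obtain
$$\ywu^{\coprod}_{\tup}={\xu'}^{\perp}_{\tup}\qquad\text{and}\qquad\ywu^{\coprod}_{\dmu^+}={\xu'}^{\perp}_{\dmu^+}.$$
Indeed, Hom-groups agree across the full inclusions, so the two sides of each identity consist of the same objects.

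Finally, the projectivity assumption on $X$ lets us invoke assertion 3 applied to $\xu'$, which gives exactly the left (resp. right) admissibility of ${\xu'}^{\perp}_{\tup}$ in $\tup$ and of ${\xu'}^{\perp}_{\dmu^+}$ in $\dmu^+$. Combined with the previous identities, this proves the claim.

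In summary, the heavy lifting --- Neeman's results and the new Theorem \ref{tnee2} --- is all encoded in assertions 1 through 3; assertion 4 is a bookkeeping argument that interleaves them. The only subtle point I would double-check is the compatibility $({\xu'}^{\perp}_{\dux})\cap\tup={\xu'}^{\perp}_{\tup}$ (and likewise for $\dmu^+$); this is immediate from fullness, but it is what makes the strategy of first passing to $\dux$ and then restricting actually work.
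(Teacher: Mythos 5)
Your argument establishes only assertion 4, and for that assertion it is correct and coincides with the paper's own route: given assertions 1--3, the chain $\ywu={\xu'}^{\perp}_{\tub}$, hence $\ywu^{\coprod}=({\xu'}^{\perp}_{\tub})^{\coprod}={\xu'}^{\perp}_{\dux}$, followed by the observation that $({\xu'}^{\perp}_{\dux})\cap\tup={\xu'}^{\perp}_{\tup}$ (and likewise for $\dmu^+$) since these are strictly full subcategories of $\dux$, is exactly how the ``moreover'' clause of Theorem \ref{tone}(II.1) is deduced there from parts I.1 and II.1 of that theorem. So your bookkeeping for part 4 is sound.

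The gap is that assertions 1--3 are part of the statement and carry essentially all of its mathematical content, and you take them as given. Assertion 1 is not formal: it requires Theorem \ref{tamain}(I,II) (existence of $\dmu$-adjacent decompositions in a compactly generated category and their restriction to the subcategories $\tu_{\au}$) together with Theorem \ref{tcond}(I), i.e., the identification of $D^s_{coh}(X)$ with $\tu_{\au^s}$; for $s\in\{b,-\}$ this rests on Neeman's Corollary 0.5 of \cite{neesat}, and for $s\in\{u,+\}$ on Theorem \ref{tnee2}(1), whose proof is an induction using the standard semi-orthogonal decomposition of the derived category of a projectivized bundle. Assertion 2 requires either the Grothendieck duality argument of Corollary \ref{cdual} (when $X$ is regular) or, under Assumption \ref{assa}, Theorem 0.2 of \cite{neetc} applied inside the mock homotopy category of projectives (Theorem \ref{tcond}(II)). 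Assertion 3 again needs Theorem \ref{tnee2}(1). None of these inputs, nor the translation between admissible subcategories and semi-orthogonal decompositions via Proposition \ref{pdec} by which the paper passes from Theorem \ref{tone} to Theorem \ref{tadm}, appears in your proposal; as a proof of the full theorem it is therefore incomplete.
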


\begin{rrema}\label{rk} 
1. A significant part of our 
arguments can be "axiomatized"; cf. Theorem \ref{tamain} below. 

2. The "moreover" statement in Theorem \ref{tadm}(2) vastly generalizes and extends Theorem A.1 of \cite{karkuz}. 

3. Furthermore, Corollary \ref{csub}  yields that all (right and left) admissible  subcategories provided by Theorem \ref{tadm}(1) (except $\xu^{\coprod}_{\dux}$) "restrict" to the intersections of the corresponding subcategories of $\dux$ with all "support subcategories" of $\dux$ coming from unions of closed subsets of $S=\spe R$. 

4. Recall that the obvious exact functors $D^-(\operatorname{coh}(X))\to D^-_{coh}(\operatorname{Qcoh}(X))$
and $D^b(\operatorname{coh}(X))\to D^b_{coh}(\operatorname{Qcoh}(X))$ are equivalences of categories; see \cite[Tag 0FDA]{stacksd}.

On the other hand, a similar functor $D(\operatorname{coh}(X))\to D_{coh}(\operatorname{Qcoh}(X))$ is not necessarily and equivalence; see \S3 of \cite{poshnur}. However, it {\bf is} an equivalence if $X$ is regular of finite Krull dimension; see Corollary 5.12 of ibid. 
\end{rrema}

The theorem was inspired by  "duality between weight and $t$-structure" statements that were studied by the author in several papers starting from 
 \cite{bws}; see \S\ref{swt} below for more detail. Another recent ingredients are the descriptions of some of our categories as certain categories of functors from $\tux$ and $\tub$. These are provided by Theorem 0.2 of \cite{neetc} (cf. Remark \ref{rimpr}(1)  below) together with the following theorem. 


\begin{theore}\label{tnee2}

I. The 
 restricted Yoneda 
 functor 
$\yu:\tup\to \adfur(\tux\opp,R-\mmodd)$ 
 is full.

II. Assume that $X$ is projective over $S$ (in the sense of Definition \ref{dadm}(\ref{iproj})).

1. Then 
 for an object $N$ of $\dux$ we have $\dux(M,N)\in R-\mmodd$ for all $M\in \obj \tux$ if and only if $N\in \obj \tup$  (see Definition \ref{dadm}(\ref{icoh},\ref{imod})).

Moreover, for $N\in \obj \tup$ we have $N\in \obj \dmu^-$ (resp.  $N\in \obj \dmu^+$) if and only if for any $M\in \obj \tux$ we have $\{M[i]\}\perp \{N\}$ if $i\ll 0$  (resp. $i\gg 0$). 

 
 2. Assume 
   that the ring $R$ is either countable or {\it self-injective}, that is, injective as a module over itself. Then any $R$-linear homological functor $\tux\opp\to R-\mmodd$ 
 is represented by an object of $\tup$. 
 \end{theore}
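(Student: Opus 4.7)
The plan is to address the three assertions in turn, bootstrapping from Neeman's analogous results for $D^b_{coh}(X)$ and $D^-_{coh}(X)$ proved in \cite{neesat,neetc} and from Brown representability for $\dux$. Throughout I exploit that under the weak projectivity hypothesis the twists $\mathcal{O}_X(n)$ pulled back from $Y=\mathbb{P}(\mathcal{E})$, together with their shifts, generate $\tux$ as a thick subcategory, so that testing against $\tux$ reduces to testing against these twists.

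For assertion 1, the ``only if'' direction is standard: if $N\in\obj\tup$ has coherent cohomology and $M$ is perfect, then $\dux(M,N)$ is computed by a strongly convergent spectral sequence whose terms are finitely generated $R$-modules (using properness of $X/R$); boundedness in either direction is preserved by the same spectral sequence. Conversely, if $\dux(M,N)\in R-\mmodd$ for every perfect $M$, specializing to $M=\mathcal{O}_X(-n)[i]$ gives that $H^{-i}(X,N(n))$ is finitely generated for all $n,i$. Combined with Serre vanishing for $n\gg 0$ and the standard characterization of coherence via twisted global sections, this forces every cohomology sheaf of $N$ to be coherent, yielding $N\in\obj\tup$. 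The bounded-above and bounded-below refinements then follow by specializing the one-sided vanishing conditions against the same twist generators.

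For assertion 2, the fullness of $\yu$ on $\tup$ is obtained by truncation from the analogous fullness for $\dmu^-$ proved in \cite{neetc}. Given a morphism $\phi:\yu(N_1)\to\yu(N_2)$ of restricted Yoneda functors with $N_1,N_2\in\obj\tup$, a perfect complex of amplitude $\leq d$ cannot distinguish $N_i$ from a sufficiently high truncation $\tau^{\leq n}N_i\in\obj\dmu^-$; hence $\phi$ descends to natural transformations between the restricted Yoneda functors of these truncations, which lift by the $\dmu^-$ case to morphisms $\phi_n$. Assembly of the $\phi_n$ into a single morphism $N_1\to N_2$ proceeds via $N_i\simeq R\varprojlim_n \tau^{\leq n}N_i$ together with vanishing of $\varprojlim^1$ against perfect test objects, a standard consequence of compact generation of $\dux$.

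For assertion 3, given an $R$-linear homological $H:\tux\opp\to R-\mmodd$, I apply Neeman's Brown representability for $\dux$ (compactly generated by $\tux$) to the composition of $H$ with the forgetful functor to $\ab$: this produces $N\in\obj\dux$ that represents $H$ at the level of abelian groups. The remaining task is to upgrade to an $R$-linear representation, matching the given $R$-action on $H$ with the canonical $R$-action on $\dux(-,N)$; the obstruction lies in a $\varprojlim^1$-type group of phantom natural transformations. For countable $R$, a Milnor-type exact sequence applied to a countable filtration of $R$ kills this $\varprojlim^1$; for self-injective $R$, injectivity of $R$ over itself splits the relevant extension. The main obstacle is expected to be precisely this $R$-linear upgrade in part 3, since the dichotomy of hypotheses signals that no unified treatment is available and each case requires its own phantom-vanishing input; parts 1 and 2, while technical, are essentially reductions to Neeman's framework via projective embedding and truncation.
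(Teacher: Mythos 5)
Your part 3 misidentifies where the difficulty lies, and this is a genuine gap. By the paper's Lemma \ref{lrlin}, $R$-linearity is free: any additive functor to $\ab$ becomes $R$-linear by letting $r$ act through $F(r\id)$, so there is no ``$R$-linear upgrade'' step and no phantom obstruction attached to it. The real problem is the representability over $\ab$ itself: Brown representability for a compactly generated $\dux$ applies to cohomological functors on all of $\dux$ converting coproducts to products, \emph{not} to homological functors defined only on the compacts $\tux\opp$ --- the latter (Brown--Adams type) representability is exactly what fails in general and is why the two hypotheses appear. The paper handles the countable case by noting $\tux$ is a countable category and invoking Neeman's Theorem 5.1 of \cite{neecb}, and the self-injective case by the Bondal--Van den Bergh double-duality trick: $\hat H=\homm_R(H(-),R)$ is homological because $R$ is injective over itself, extends by Krause to a coproduct-preserving homological functor on $\dux$, whose $R$-dual is a product-preserving cohomological functor, hence representable; quasi-Frobeniusness then gives $\homm_R(\homm_R(-,R),R)\cong\id$ on $R-\mmodd$. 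Your proposed mechanism (``injectivity splits the relevant extension'' of phantoms) does not engage with any of this.

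In part 1 the converse direction also has a gap. From finite generation of all $\dux(\mathcal{O}_X(-n)[i],N)$ you conclude that ``Serre vanishing and the standard characterization of coherence'' force every $H^j_{can}(N)$ to be coherent; but $N$ is an arbitrary unbounded object of $\dux$, the hypercohomology spectral sequence relating $H^p(X,H^q_{can}(N)(n))$ to $\dux(\mathcal{O}_X(-n),N[p+q])$ need not converge, and one cannot isolate a single cohomology sheaf this way. The paper (following \cite[Theorem A.1]{bvdb}) instead reduces to $X=Y$ via $i_*$ and runs an induction along the Beilinson-type semi-orthogonal decomposition of $\dux$ into the images of $\Phi_j=p^*(-)(j)$, $0\le j\le d$: the projection $\Phi_{m_0!}(N)$ has cohomology $\dux(p^*R(m_0),N[i])$, hence lies in $D(R-\mmodd)$, and the complementary piece inherits the hypothesis. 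Some such device is needed; your sketch replaces it with an appeal to a ``standard'' fact that is not standard for unbounded complexes. Finally, your part 2 is in the right spirit (approximate, lift, assemble by a colimit with a $\varprojlim^1$ argument), but note that $N\cong\hinli_n\tau^{\le n}N$, not $R\varprojlim$, and the compatibility of the individually chosen lifts $\phi_n$ across $n$ is not automatic --- this is precisely what the subsequence-extraction argument of Theorem \ref{tison}(I.2) (via Proposition 1.1.9 of \cite{bbd}) is for, and your sketch passes over it.
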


\begin{rrema}\label{rnee2}
1. Clearly, 
 Theorem \ref{tnee2}(II.1) also yields 
  a similar characterization for $\tub=\dmu^-\cap \dmu^+$.

2. This theorem substantially extends Corollary 0.5 of \cite{neesat}, where only $\tub$ and  $\dmu^-$ were considered. 
 Respectively, loc. cit. hints that it suffices to
assume that X is proper over $R$ in this theorem.

Recall also that ibid. was inspired by the question of existence of certain adjoint functors; see Remark 0.7 of ibid. 
 We prove a nice general result of this sort in Corollary \ref{cadj} below and combine it with Theorem \ref{tnee2}(1) in Remark \ref{rcadj}(1). Possibly, these statements are more "practical" than the corresponding Corollary 0.4 of ibid.
 

3. We will not apply Theorem \ref{tnee2}(II.2) in this paper (yet cf. Remark \ref{rimpr}(1); we mention adjoint functors there as well). Still 
 it is worth noting that combining it with the first part of the theorem one obtains the following (if $X$ is projective over $S$ and $R$ is either countable or self-injective):  the essential image of $\yu$ consists of all $R$-linear homological 
 functors $\tux\opp\to R-\mmodd$, and  the image of its restriction  to  $ \dmu^+$ consists of all those homological functors $H: \tux\opp\to R-\mmodd$ such that for every $M\in \obj \tux$ we have $H(M[i])=\ns$   if 
   $i\gg 0$. 
Clearly, the $ \dmu^-$ and $\tub$-versions of this observation  follow from Theorem \ref{tnee2} as well; yet recall that Corollary 0.5 of \cite{neesat} gives these statements without any extra assumptions on on $R$ (and $X$).  

4. The 
  proof of Theorem \ref{tnee2}(II.1) and of the self-injective version of part II.2 originates from the proof of \cite[Theorem A.1]{bvdb}.
\end{rrema}

We also prove some more statements of this sort; see Theorem \ref{tcond} below. 


\begin{rrema}\label{rvac}
1. Recall that semi-orthogonal decompositions of certain derived categories of (quasi)coherent sheaves are important for 
 non-commutative geometry.


 Note also that Theorem \ref{tadm}(1,2) gives more non-trivial statements in the case where $D_{perf}(X)\neq D^b(X)$, that is, if $X$ is singular; 
  cf. Remark \ref{rdual}(2) below.  
 This case is somewhat less popular than the regular one. Yet some non-trivial semi-orthogonal decompositions of  $D^b(X)$ for a possibly singular $X$ are 
 provided by Theorem 6.7 
   and Corollary 6.10 of \cite{schnurdb}. 
Moreover, semi-orthogonal decompositions in the case where $X$ is a singular surface are discussed in detail in \cite{karkuz}. 

Thus the "geometric" results of the current paper appear to be relevant.

2. An alternative  version of this paper is available as \cite{bdec}.  It is more self-contained than the current version and some of the proofs are more detailed; yet several remarks are omitted.
\end{rrema}

Let us  now describe the contents  of the paper. Some more information of this sort may be found in the beginnings of sections. 

In \S\ref{sprel} we give 
 some basic definitions (mostly) related to semi-orthogonal decompositions of triangulated categories, and prove several simple properties of these decompositions.
  
  In \S\ref{smain} we prove an abstract Theorem \ref{tamain} on the existence of certain semi-orthogonal decompositions. We use it to prove Theorem \ref{tadm} (see Theorem \ref{tone} that is formulated in the language of "binary" semi-orthogonal decompositions), to deduce the easy Corollary \ref{cadj} on the existence of adjoint functors (cf. Remark \ref{rcadj})), and to study certain "support subcategories" of $\dux$. 
   Moreover, we prove Theorem \ref{tnee2}(II.1).

In \S\ref{scomm} we finish the proof of Theorem \ref{tnee2}. 
 We  deduce its second part from a general theorem on Neeman-type approximability in triangulated categories. 
  Next we 
   study  semi-orthogonal decompositions that (may) consist of more than two components. 
 We also discuss the relation of our arguments to (adjacent) weight structures and $t$-structures; those were studied in several preceding papers of the author.

The author is deeply grateful to the referee for several important comments to the text.

\section{Preliminaries}\label{sprel}

In this section we discuss simple notions related to triangulated categories and  semi-orthogonal decompositions. 

In \S\ref{snotata}  we recall some definitions and statements 
related to triangulated categories. 

In \S\ref{ssdort} we 
 define and study ("binary") semi-orthogonal decompositions. 

\subsection{A few definitions and statements}\label{snotata}

\begin{itemize}


\item For categories $C',C$ we write $C'\subset C$ if $C'$ is a 
  subcategory of $C$;  recall that we  only consider strictly full subcategories in this paper.



\item The symbol $\tu$ below will always denote some triangulated category. The symbols 
 $\dmu$, $\lo$,  and $\ro$ (possibly, endowed with indices) 
  will  also be used  for triangulated categories only.


\item For a class  $\cp\subset  \obj\tu$ we call 
a class $\cp'\subset  \obj\tu $ 
 the {\it extension-closure} of $\cp$ if $\cp'$ is the smallest 
 class of objects of $\tu$ that contains $\cp\cup\ns$ and such that for a $\tu$-distinguished triangle $A \to C \to B \to A[1]$ we have $C\in \cp'$ whenever $A,B\in \cp'$.

\item Given a distinguished triangle $X\stackrel{f}{\to}Y\to Z$ we will write $Z=\co(f)$; recall that $Z$ is determined by $f$ up to a non-canonical isomorphism.


\item All 
 coproducts in this paper will be small.

\end{itemize}



 We will also need the following well-known definitions.

\begin{defi}\label{dcomp}
Let  $\dmu$ be a  triangulated category closed with respect to (small) coproducts. 

\begin{enumerate}

\item\label{dsmall}
 An object $M$ of $\dmu$ is said to be {\it compact} (in $\dmu$) if the functor  
 $\dmu(M,-):\dmu\to \ab$ 
 respects coproducts.

\item\label{dcgen}
We will say that a (triangulated)  subcategory $\tu$ of $\dmu$ {\it compactly generates} $\dmu$ whenever $\tu$ is essentially small, its objects are compact in $\dmu$, 
 and $\dmu=\tu^{\coprod}$ (see Definition \ref{dadm}(\ref{icopr})). 

\end{enumerate}
\end{defi}

The following  statements 
 are mostly simple and well-known. 

\begin{lem}\label{lbsn}

Let $\lo$ and $\ro$ be (strictly full) triangulated subcategories of $\tu$.
Take 
$C$ to be the class of those $M\in \obj \tu$  such that there exists a distinguished triangle $L\to M\to R\to L[1]$ with $L\in \lo$ and $R\in \ro$.

1.   If  $\lo\perp \ro$ then $C$ gives a triangulated subcategory of $\tu$ as well.

2. If  $\tu$ contains all (small) coproducts of its objects, and $\lo$ and $\ro$ are closed with respect to $\tu$-coproducts then $C$ is closed with respect to $\tu$-coproducts  as well.

3. If $\tu$ is closed with respect to coproducts then $\lo^{\perp}_{\tu}=(\lo^{\coprod})^{\perp}_{\tu}$.

4. If $\tu$  is closed with respect to coproducts, $\lo$ is essentially small and consists of compact objects, and  $\lo^{\perp}_{\tu}=\ns$, then $\lo$ compactly generates $\tu$. 
\end{lem}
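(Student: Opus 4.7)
For part 1, my plan is to follow the classical Beilinson--Bernstein--Deligne argument. Closure of $C$ under shifts is clear since $\lo$ and $\ro$ are triangulated. For closure under extensions, given a distinguished triangle $M_1 \to M_3 \to M_2 \to M_1[1]$ in $\tu$ with $M_1, M_2 \in C$ certified by triangles $L_i \to M_i \to R_i \to L_i[1]$ ($L_i \in \obj \lo$, $R_i \in \obj \ro$), the vanishing $\tu(L_2, R_1[1]) = 0$ (from $\lo \perp \ro$) allows one to lift the composition $L_2 \to M_2 \to M_1[1]$ along $L_1[1] \to M_1[1]$ to a morphism $\alpha \colon L_2 \to L_1[1]$. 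Setting $L_3 = \co(\alpha[-1])$ gives $L_3 \in \obj \lo$, and a symmetric construction yields $R_3 \in \obj \ro$; a diagram chase using the $3 \times 3$ lemma then produces the required triangle $L_3 \to M_3 \to R_3 \to L_3[1]$.

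Part 2 is immediate: for objects $M_i \in C$ sitting in triangles $L_i \to M_i \to R_i \to L_i[1]$, the coproduct diagram $\coprod_i L_i \to \coprod_i M_i \to \coprod_i R_i \to (\coprod_i L_i)[1]$ is distinguished (coproducts of distinguished triangles are distinguished in any triangulated category with small coproducts), and its outer terms lie in $\lo$ and $\ro$ respectively by the coproduct-closure hypothesis.

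For part 3, I would observe that for every $N \in \obj \tu$ the class ${}^{\perp_\tu}\{N\}$ is a triangulated subcategory of $\tu$ (from the long exact sequence of $\tu(-, N)$) that is closed under $\tu$-coproducts (from the identity $\tu(\coprod_i X_i, N) = \prod_i \tu(X_i, N)$). Consequently, if it contains $\lo$ then it must also contain $\lo^{\coprod}$ by the very definition of the latter; this gives the inclusion $\lo^{\perp}_\tu \subseteq (\lo^{\coprod})^{\perp}_\tu$, and the reverse inclusion is trivial since $\lo \subset \lo^{\coprod}$.

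Part 4 is the main obstacle, and is in essence Neeman's compact generation theorem. My plan is to invoke the usual homotopy-colimit construction: for any $M \in \obj \tu$ one inductively builds a sequence $N_0 \to N_1 \to \cdots$ with $N_i \in \obj \lo^{\coprod}$ and compatible maps $N_i \to M$, where each $N_{i+1}$ is the cone of a morphism $\coprod_j L_j[n_j] \to N_i$ (with $L_j \in \obj \lo$) chosen to cumulatively kill every morphism from a shift of an object of $\lo$ into the cone of $N_i \to M$. Taking $N = \hinli N_i$, the compactness of objects of $\lo$ permits one to commute $\tu(L, -)$ past the homotopy colimit, and one concludes that $N \in \obj \lo^{\coprod}$ and that the cone of $N \to M$ lies in $\lo^{\perp}_\tu$; the hypothesis $\lo^{\perp}_\tu = \ns$ then forces $N \cong M$, so $M \in \obj \lo^{\coprod}$.
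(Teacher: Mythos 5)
Your proposal is correct and takes essentially the same route as the paper, whose proof consists of citations: parts 1--2 are referred to Proposition 2.1.1(1,2) of \cite{bsnew}, part 4 to Proposition 8.4.1 of \cite{neebook}, and your BBD-style extension argument, coproduct-of-triangles argument, and homotopy-colimit construction are exactly the standard proofs underlying those references. Your part 3 coincides with the paper's one-line hint (that ${}^{\perp_\tu}\{N\}$ is a coproduct-closed triangulated subcategory), fleshed out.
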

\begin{proof}
 Assertions 1 and 2 easily follow from Proposition 2.1.1(1,2) of \cite{bsnew}; note (for assertion 1) that $C[1]=C$.
 
 Assertion 3 is very easy; note that for any object $N$ of $\tu$ the class ${}^\perp_{\tu} \{N\}$ 
 is closed with respect to $\tu$-coproducts. 
 
 Assertion 4 is given by Proposition 8.4.1 of \cite{neebook}.
\end{proof}

\subsection{Semi-orthogonal decompositions: basics}\label{ssdort}

 Let us give some more of our central definitions.

\begin{defi}\label{dsort}

Assume that $\tu$ and $\tu'$ are 
  triangulated subcategories of a triangulated category $\dmu$

1. Let $\lo$ and $\ro$ be 
 (strictly full) 
  triangulated subcategories of $\tu$. 

We will say that the couple $D=(\lo,\ro)$ 
 is a {\it semi-orthogonal decomposition} of $\tu$ (or just gives a decomposition of $\tu$) if $\obj \lo\perp \obj \ro$ and for any $M\in \obj \tu$ there exists a distinguished triangle \begin{equation}\label{edec} L\to M\to R\to L[1]\end{equation}
 with $L\in \lo$ and $R\in \ro$. 
 
 
 2. 
  The couple $D'=D^{\perp}_{\tu'}=(\lo^{\perp}_{\tu'},\ro^{\perp}_{\tu'})$ (see Definition \ref{dadm}(\ref{iperp})) 
 is said to be {\it $\dmu$-adjacent} 
  to $D$ 
  if  $D'$ is a decomposition of $\tu'$.

3. If $D_{\dmu}=(\lo_{\dmu},\ro_{\dmu})$ is a semi-orthogonal decomposition of $\dmu$ 
  and $D_{\dmu}{\cap}{\tu'}=(\lo_{\dmu}\cap  \tu',\ro_{\dmu}\cap  \tu')$ (see   Definition \ref{dadm}(\ref{irest}))  is a semi-orthogonal decomposition of $\tu'$ then we say that $D_{\dmu}$ {\it restricts} to $\tu'$, $D_{\dmu}{\cap}{\tu'}$ is the corresponding {\it restriction}, and $D_{\dmu}$ is an {\it extension} of  $D_{\dmu}{\cap}{\tu'}$ to $\dmu$.
 
 4. We will write $D_1\llo D_2$ if  $D_i=(\lo_i,\ro_i)$ ($i=1,2$) are  semi-orthogonal decompositions of 
  $\tu$ and $\lo_1\subset \lo_2$.
\end{defi}

\begin{rema}\label{rdec1}
 1.  Clearly, $D'$ is $\dmu$-adjacent to $D$ if and only if it is $\dmu'$-adjacent to it, where $\dmu'$ is any triangulated subcategory of $\dmu$ that contains both $\tu$ and $\tu'$.

2. Semi-orthogonal decompositions described in Definition \ref{dsort}(1) may be called the "binary" ones. We 
  postpone the more general 
   "multiple" decompositions and their properties till \S\ref{smult}. 
    The reason for this is that these more general decompositions do not help in proving anything like Theorem \ref{tadm}. 

On the other hand,  binary semi-orthogonal decompositions 
 are important for our proofs even though 
  Theorems \ref{tone} below contains just a little more information than the corresponding 
   "admissible statements" in  Theorem \ref{tadm}.

3.   We will discuss some 
    predecessors of Definition \ref{dsort}(2) in Remark \ref{radj}(1) below.

\end{rema}

\begin{pr}\label{pdec}
 1. Assume that $D=(\lo,\ro)$ 
  is a  semi-orthogonal decomposition of $\tu$. 

Then $\lo^\perp_{\tu}= \ro $, $\lo={}^\perp_{\tu}  \ro$, and 
 there exists  an exact right adjoint $R_D$ to the embedding $\lo\to \tu$ and a left adjoint $L_D$ to the embedding $\ro\to\tu$.

Moreover,  the triangle 
(\ref{edec})  is functorially 
 determined by $M$, and the arrows  $L\to M\to R$ in it come from the  transformations corresponding to the aforementioned 
  adjunctions.

 2.  Respectively, the correspondence  $D\mapsto \ro$ (resp. $D\mapsto \lo$)  gives a bijection of between the class of  semi-orthogonal decompositions of $\tu$ and that of    left (resp. right) admissible subcategories of $\tu$; see Definition \ref{dadm}(\ref{iadm}). 

  \end{pr}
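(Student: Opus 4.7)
The plan is to establish Part 1 first and deduce Part 2 from it; the semi-orthogonality condition $\obj\lo\perp\obj\ro$ drives every step. For the orthogonality identities in Part 1, the inclusion $\ro\subset\lo^\perp_\tu$ is semi-orthogonality itself. For the reverse, take $M\in\lo^\perp_\tu$ with decomposition triangle $L\to M\to R\to L[1]$ and apply $\tu(L,-)$. Since $M\in\lo^\perp_\tu$ one has $\tu(L,M)=0$, and since $L\in\lo$ while $R,R[-1]\in\ro$ also $\tu(L,R)=\tu(L,R[-1])=0$; the long exact sequence then forces $\id_L=0$, hence $L=0$ and $M\cong R\in\ro$. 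The identity $\lo={}^\perp_\tu\ro$ is dual.

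To construct the right adjoint $R_D$, for each $M\in\obj\tu$ choose a decomposition triangle $L\to M\to R\to L[1]$ and put $R_D(M):=L$. The key observation is that applying $\tu(\ell,-)$ to this triangle for any $\ell\in\lo$ yields an isomorphism $\tu(\ell,L)\xrightarrow{\cong}\tu(\ell,M)$ induced by post-composition with $L\to M$, the outer terms $\tu(\ell,R)$ and $\tu(\ell,R[-1])$ vanishing by $\lo\perp\ro$. Since $L=R_D(M)\in\lo$ and $\lo\subset\tu$ is a full subcategory, this is precisely the sought adjunction isomorphism $\lo(\ell,R_D(M))\cong\tu(\ell,M)$; and specialising to $\ell=R_D(M)$ with any $f:M\to M'$ it shows that the composite $R_D(M)\to M\to M'$ lifts uniquely through $R_D(M')\to M'$ to a morphism $R_D(M)\to R_D(M')$. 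This single uniqueness simultaneously makes $R_D$ a functor, identifies $L\to M$ and $M\to R$ with the counit and unit of the two new adjunctions, and shows that the whole triangle is functorially (hence canonically) determined by $M$. Exactness of $R_D$ is automatic, as a right adjoint to an exact functor between triangulated categories. The construction of $L_D$ is entirely dual.

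For Part 2, the forward direction is immediate from Part 1: given $D=(\lo,\ro)$, the functor $L_D$ (resp.\ $R_D$) witnesses the left admissibility of $\ro$ (resp.\ the right admissibility of $\lo$). Conversely, suppose $\ro\subset\tu$ is left admissible with left adjoint $\pi:\tu\to\ro$ and unit $\eta$. Set $\lo:={}^\perp_\tu\ro$; this is strictly full, shift-stable, and closed under cones (by an easy long exact sequence argument), hence a strictly full triangulated subcategory. For $M\in\obj\tu$ complete $\eta_M$ to a distinguished triangle $K\to M\xrightarrow{\eta_M}\pi(M)\to K[1]$. Applying $\tu(-,R)$ for $R\in\ro$ and using that the two arrows $\tu(\pi(M),R)\to\tu(M,R)$ and $\tu(\pi(M)[-1],R)\to\tu(M[-1],R)$ are the adjunction isomorphisms (the latter since $R[1]\in\ro$), the long exact sequence forces $\tu(K,R)=0$; hence $K\in\lo$, and $(\lo,\ro)$ is a semi-orthogonal decomposition. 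Bijectivity is then immediate from Part 1: $\ro\mapsto({}^\perp_\tu\ro,\ro)$ is a two-sided inverse to $D\mapsto\ro$.

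The only mildly delicate point is packaging the arguments of Part 1 to extract simultaneously the adjunction isomorphism, the functoriality of $R_D$, and the canonicity of the decomposition triangle; but all three follow from the single pair of vanishings $\tu(L,R)=\tu(L,R[-1])=0$ supplied by semi-orthogonality, and everything else reduces to formal manipulation of long exact sequences and standard properties of adjoints.
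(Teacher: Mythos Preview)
Your proof is correct. The paper itself does not give an argument here but simply cites Lemmata 2.3 and 2.5 of \cite{kuzbch} together with Proposition 1.3.3 of \cite{bbd}; what you have written is precisely the standard direct proof underlying those references, so the approaches coincide in substance. One small remark: the claim that a right adjoint to an exact functor between triangulated categories is automatically exact, while true, is itself a lemma (e.g.\ \cite[Lemma 5.3.6]{neebook}) rather than a tautology, so you might cite it.
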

\begin{proof}

   These are  well-known statements.  They are mostly contained in Lemmata 2.5 and 2.3 of \cite{kuzbch}; yet invoke   Proposition 1.3.3. of \cite{bbd} (along with its proof; the 
    relation of $t$-structures mentioned loc. cit. to semi-orthogonal decompositions is discussed in \S\ref{swt} below) 
    for the calculation of the arrows in (\ref{edec}).
     \end{proof} 

Let us prove 
 some more properties of 
  our notions.

\begin{pr}\label{port}
Assume  $\tu,\tu'\subset \dmu$,  and  $D$ (resp. $D'$) 
  is a  semi-orthogonal decomposition of $\tu$ (resp. $\tu'$). 

I. If $D'=D^{\perp}_{\tu'}$ (see 
Definition \ref{dadm}(\ref{iperp})) and $\dmu$ is $R$-linear 
 (see Definition \ref{dadm}(\ref{imod}) then the following 
  bi-functors $\tu\opp\times \tu'\to R-\modd$ are canonically isomorphic: $\dmu(L_D(-),-)\cong \dmu(-,R_{D'}(-))$ and    $\dmu(R_D(-),-)\cong \dmu(-,L_{D'}(-))$. 

II. Assume that $D_1'=D^{\perp}_{1,\tu'}$ and $D_2'=D^{\perp}_{2,\tu'}$, where $D_i=(\lo_i,\ro_i)$ ($i=1,2)$) are semi-orthogonal decompositions of  $\tu$. 

1.  Then $D_1\llo D_2$ if and only if $\ro_2\subset \ro_1$. Moreover, if these conditions are fulfilled then  $D'_2\llo D'_1$. 

2. Suppose that $\tu\subset \tu'$. Then all the conditions in assertion II.1 are equivalent. 
\end{pr}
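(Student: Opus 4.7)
The plan is to extract the canonical isomorphisms of Part I from the two decomposition triangles, and then derive Part II from Proposition \ref{pdec}(1) together with the observation that when $\tu\subset\tu'$ the relation $\perp_{\tu'}$ restricts to $\perp_{\tu}$ on objects of $\tu$.

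For Part I, I will fix $M\in\obj\tu$ and $N\in\obj\tu'$ and write the decomposition triangle of $M$ with respect to $D$ as $L\to M\to R\to L[1]$ (so $L=R_D(M)\in\lo$, $R=L_D(M)\in\ro$) and that of $N$ with respect to $D'$ as $L'\to N\to R'\to L'[1]$ (so $L'=R_{D'}(N)\in\lo^{\perp_{\tu'}}$, $R'=L_{D'}(N)\in\ro^{\perp_{\tu'}}$). Applying $\dmu(R,-)$ to the second triangle, the outer terms $\dmu(R,R')$ and $\dmu(R,R'[-1])$ vanish since $R\in\ro$ and $\ro^{\perp_{\tu'}}$ is shift-stable; this yields a natural isomorphism $\dmu(L_D(M),N)\cong\dmu(L_D(M),R_{D'}(N))$. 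Applying $\dmu(-,L')$ to the first triangle, the outer terms $\dmu(L,L')$ and $\dmu(L[1],L')$ vanish because $L\in\lo$ and $\lo^{\perp_{\tu'}}$ is shift-stable; this yields $\dmu(L_D(M),R_{D'}(N))\cong\dmu(M,R_{D'}(N))$. Composing gives the first claim, and naturality in both arguments follows from the functoriality of the two decomposition triangles supplied by Proposition \ref{pdec}(1). The second isomorphism $\dmu(R_D(-),-)\cong\dmu(-,L_{D'}(-))$ is obtained by the symmetric argument, interchanging the roles of the two halves of each triangle.

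For Part II.1, Proposition \ref{pdec}(1) identifies $\ro_i=\lo_i^{\perp_\tu}$ and $\lo_i={}^{\perp_\tu}\ro_i$, so monotonicity of orthogonal complements in $\tu$ immediately gives $\lo_1\subset\lo_2\iff\ro_2\subset\ro_1$, i.e.\ $D_1\llo D_2\iff\ro_2\subset\ro_1$. The ``moreover'' assertion then reduces to the evident implication $\lo_1\subset\lo_2\Longrightarrow\lo_2^{\perp_{\tu'}}\subset\lo_1^{\perp_{\tu'}}$, which is precisely $D_2'\llo D_1'$.

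For Part II.2 it remains only to derive $\ro_2\subset\ro_1$ from $D_2'\llo D_1'$ under the extra hypothesis $\tu\subset\tu'$. Inspecting Definition \ref{dadm}(\ref{iperp}) shows that for any triangulated $\lo_i\subset\tu$ one has $\lo_i^{\perp_{\tu'}}\cap\tu=\lo_i^{\perp_\tu}=\ro_i$ (the first equality because $Y\in\tu\subset\tu'$ automatically, the second by Proposition \ref{pdec}(1)); intersecting the hypothesis $\lo_2^{\perp_{\tu'}}\subset\lo_1^{\perp_{\tu'}}$ with $\tu$ then produces the required inclusion. No step is a genuine obstacle; the only delicate point is keeping straight the ambient category in which each orthogonal complement is taken, for which the notation of Definition \ref{dadm}(\ref{iperp}) is tailor-made.
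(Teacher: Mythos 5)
Your proof is correct throughout; parts II.1 and II.2 coincide with the paper's argument (in II.2 the paper likewise intersects the inclusion $\lo_2^{\perp_{\tu'}}\subset\lo_1^{\perp_{\tu'}}$ with $\tu$ after observing $\ro_i=\lo_i^{\perp_{\tu'}}\cap\tu$). Where you genuinely diverge is Part I. The paper does not argue directly: it invokes Proposition 2.5.4(1) of \cite{bger}, which computes $\dmu(M,L^t_nN)$ and $\dmu(M,R^t_nN)$ as images of connecting maps between weight truncations, and then observes (in Remark \ref{radj}(2)) that when the weight structure and $t$-structure come from $D$ and $D'$ these connecting maps are identities, so the images collapse to the desired Hom groups. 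You instead run the two decomposition triangles through $\dmu(R_D(M),-)$ and $\dmu(-,R_{D'}(N))$ and kill the flanking terms of the long exact sequences using $\obj\lo\perp\obj\lo^{\perp_{\tu'}}$ and $\obj\ro\perp\obj\ro^{\perp_{\tu'}}$. Your route is more elementary and self-contained (it needs nothing beyond Proposition \ref{pdec}(1) and the defining orthogonalities), and your two vanishing computations are exactly right, including the shift-stability used to kill $\dmu(R,R'[-1])$ and $\dmu(L[1],L')$. What the paper's route buys is the connection to the weight/$t$-structure orthogonality formalism of \cite{bger} and \cite{bvttr}, which is the conceptual source of the whole construction and is reused elsewhere (e.g.\ in Remark \ref{rimpr}(3)); your argument is the one a reader would reconstruct who only wants this proposition.
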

\begin{proof}
I. This statement easily follows from Proposition 2.5.4(1) of \cite{bger}; see 
 Remark \ref{radj}(2) below for more detail. 

II.1.  Obvious from our definitions along with Proposition \ref{pdec}(1).

2. We assume $D'_2\llo D'_1$; it suffices to prove that 
 $\ro_2\subset \ro_1$. 
Now, $\lo_i'=(\lo_i)^{\perp}_{\dmu}\cap \tu'$ and  $\ro_i=(\lo_i)^{\perp}_{\dmu}\cap  \tu$  (for $i=1,2$; see  Proposition \ref{pdec}(1) once again); hence $\ro_i=\lo_i'\cap  \tu$. Thus $\ro_2\subset \ro_1$ indeed.
  \end{proof}

\begin{rema}\label{rort}
 In all the examples that we consider in this paper the decompositions mentioned in Proposition \ref{port}(I)  extend to two adjacent decomposition of a ("big enough") category $\dmu$;  see  Theorem \ref{tamain}(I.2) below (and Remark \ref{rdec1}(1)). Now, if $\tu=\tu'=\dmu$ then for $M,N\in \obj \dmu$ we have bi-functorial isomorphisms $\dmu(L_D(M),N)\cong \dmu(L_D(M),R_{D'}(N))\cong \dmu(M,R_{D'}(N))$  that come from the corresponding adjunctions.

\end{rema}

\section{Main results}\label{smain}

In \S\ref{sabs} we prove our main abstract Theorem \ref{tamain} on the existence of certain $\dmu$-adjacent semi-orthogonal decompositions. We also deduce a simple 
 Corollary \ref{cadj} on the existence of adjoint functors.

In \S\ref{sgeom} we apply our general results to semi-orthogonal decompositions of various subcategories of $D(\operatorname{Qcoh}(X))$ (where $X$ is proper over the spectrum of a ring $R$); this 
 yields 
  a "geometric" Theorem \ref{tone} on semi-orthogonal decompositions in $D_{perf}(X)$, $D^b_{coh}(X)$, $D^-_{coh}(X)$, $D^+_{coh}(\operatorname{Qcoh}(X))$, and $D_{coh}(\operatorname{Qcoh}(X))$. 
    Moreover, Corollary \ref{csub} says that these ($D(\operatorname{Qcoh}(X))$-adjacent) decompositions can be restricted to certain "support subcategories" of the corresponding categories (that correspond to unions of closed subsets of $S=\spe R$).

In \S\ref{sunb} we prove Theorem \ref{tnee2}(II.1). We also apply Grothendieck duality arguments to establish the "regular" case of Theorems \ref{tadm} and \ref{tone}
 and relate semi-orthogonal decompositions to duality; see  Corollary \ref{cdual}.

\subsection{Abstract decomposition statements}\label{sabs}


Now we study certain decompositions coming from semi-orthogonal decompositions of categories of compact objects. We will use 
 much 
  of  Definitions \ref{dadm} and \ref{dcomp}.

\begin{theo}\label{tamain}

Assume 
 that $\dmu=\tu^{\coprod}$, 
    where $\tu\subset \dmu$ is a triangulated subcategory  whose objects are $\dmu$-compact, 
    and $D=(\lo,\ro)$ is a semi-orthogonal decomposition of $\tu$. 

I.1. 
Then the couple $D^{\coprod}$ 
  is a semi-orthogonal decomposition of $\dmu$. 

2. Assume in addition that $\tu$ is essentially small (respectively, $\dmu$ is compactly generated by it). Then  $D^{\perp}_{\dmu}$ is a semi-orthogonal decomposition  of $\dmu$ as well. Moreover,  $D^{\perp}_{\dmu}$  is also  $\dmu$-adjacent to $D^{\coprod}$, and $\lo^{\perp}_{\dmu}=\ro^{\coprod}$.

3. Assume that $\tu_0$ is a subcategory of $\dmu$ such that $D^{\perp}_{\dmu}$ restricts to a semi-orthogonal decomposition 
$D_0$ on it (see Definition \ref{dsort}(3)). 

Then $D^{\perp}_{\dmu}$ restricts to the category $\tu_{0}^{\coprod}$ 
  as well, and this restriction equals  $D_0^{\coprod}$.  


II. Assume that $R$ is a commutative unitial ring, 
 $\dmu$ is $R$-linear, and $\au$ is an exact abelian subcategory of the category $R-\modd^{\z}$ of $\z$-graded $R$-modules that is stable with respect to obvious shifts on this category (that is, $M=\bigoplus M^i$ 
  belongs to $\au$ if and only if the module $M[1]=\bigoplus M^{i+1}$ does). 
Take the following (full) subcategory $\tu_{\au}$ of $\dmu$: 
 $N\in \obj \dmu$ is an object of $\tu_{\au}$ whenever 
 for any $M\in \obj \tu$ the graded module $S_N(M)=\bigoplus_j\dmu(M[-j],N)$  belongs to $\au$. 

1. Then $\tu_{\au}$ is triangulated and the functors $L_{D^{\perp}_{\dmu}}$ and $R_{D^{\perp}_{\dmu}}$ corresponding to $D^{\perp}_{\dmu}$ (see Proposition \ref{pdec}(1) and assertion I.2) send $\tu_{\au}$ into itself.

2. Consequently, 
the couple $D^{\perp}_{\tu_{\au}}$ 
  is a decomposition of $\tu_{\au}$. 

\end{theo}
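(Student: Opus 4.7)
The plan is to handle the five parts sequentially, with each later part reducing essentially to earlier ones together with Lemma \ref{lbsn} and Proposition \ref{port}(I).

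For Part I.1, I would form the class $C$ of $M\in \obj\dmu$ admitting a triangle $L\to M\to R\to L[1]$ with $L\in \lo^{\coprod}$ and $R\in \ro^{\coprod}$. The orthogonality $\lo^{\coprod}\perp \ro^{\coprod}$ is obtained in two steps: first, for any $L\in \lo$ the class $\{Y\in \obj\dmu:\dmu(L,Y)=\ns\}$ is coproduct-closed (by $\dmu$-compactness of $L$) and triangulated, and contains $\ro$, hence contains $\ro^{\coprod}$; second, Lemma \ref{lbsn}(3) upgrades $\lo\perp \ro^{\coprod}$ to $\lo^{\coprod}\perp \ro^{\coprod}$. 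With this, Lemma \ref{lbsn}(1,2) gives that $C$ is triangulated and coproduct-closed; since $\tu\subset C$ by $D$, we conclude $\dmu=\tu^{\coprod}\subset C$.

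For Part I.2 I would first identify $\lo^{\perp}_{\dmu}$ with $\ro^{\coprod}$: the $\supset$ direction is the orthogonality from I.1, and for $\subset$ I take $M\in \lo^{\perp}_{\dmu}$ and use the Part I.1 triangle $L\to M\to R$; its first arrow is zero, so the rotation $M\to R\to L[1]\to M[1]$ splits, making $M$ a direct summand of $R$; since triangulated subcategories closed under coproducts are idempotent-complete (B\"okstedt--Neeman), $M\in \ro^{\coprod}$. Thus $D^{\perp}_{\dmu}=(\ro^{\coprod},\ro^{\perp}_{\dmu})$, and to promote this pair to a SOD I need $\ro^{\coprod}$ right admissible in $\dmu$: under either hypothesis, $\ro^{\coprod}$ is compactly generated by $\ro$ (whose objects are $\dmu$-compact), and Neeman's Brown representability produces a right adjoint to the inclusion $\ro^{\coprod}\hookrightarrow \dmu$. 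The $\dmu$-adjacency to $D^{\coprod}$ is then automatic from Lemma \ref{lbsn}(3). Part I.3 follows by applying Lemma \ref{lbsn}(1,2) to the class of $M\in \dmu$ whose $D^{\perp}_{\dmu}$-decomposition lies entirely in $\lo_0^{\coprod}$ and $\ro_0^{\coprod}$ (using that $\ro^{\perp}_{\dmu}$ is itself coproduct-closed since $\ro$ is compact): this class contains $\tu_0$ via $D_0$, hence $\tu_0^{\coprod}$; uniqueness of the decomposition (Proposition \ref{pdec}(1)) then identifies $\lo_0^{\coprod}$ with $\ro^{\coprod}\cap \tu_0^{\coprod}$ and likewise on the right.

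For Part II.1, the triangulated-ness of $\tu_{\au}$ follows from the long exact sequence attached to a triangle $N'\to N\to N''$: summing $\dmu(M[-j],-)$ over $j$ yields an exact sequence $\cdots\to S_{N'}(M)\to S_N(M)\to S_{N''}(M)\to\cdots$ in $R-\modd^{\z}$, and $\au$ being exact abelian and shift-stable forces $S_N(M)\in \au$ once $S_{N'}(M),S_{N''}(M)\in \au$. For stability of $\tu_{\au}$ under $L_{D^{\perp}_{\dmu}}$ and $R_{D^{\perp}_{\dmu}}$, I would invoke Proposition \ref{port}(I) applied to the adjacent pair $(D,D^{\perp}_{\dmu})$, giving bi-functorial isomorphisms $\dmu(M',R_{D^{\perp}_{\dmu}}(N))\cong \dmu(L_D(M'),N)$ and $\dmu(M',L_{D^{\perp}_{\dmu}}(N))\cong \dmu(R_D(M'),N)$ for $M'\in \tu$. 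Summing over $M'[-j]$ and using that $L_D,R_D$ commute with shifts produces $S_{R_{D^{\perp}_{\dmu}}(N)}(M')\cong S_N(L_D(M'))$ and $S_{L_{D^{\perp}_{\dmu}}(N)}(M')\cong S_N(R_D(M'))$; these lie in $\au$ whenever $N\in \tu_{\au}$ since $L_D(M'),R_D(M')\in \tu$. Part II.2 is then immediate: for $M\in \tu_{\au}$, the decomposition triangle from I.2 sits inside $\tu_{\au}$ by II.1, its two terms realize $\ro^{\coprod}\cap \tu_{\au}=\lo^{\perp}_{\tu_{\au}}$ and $\ro^{\perp}_{\dmu}\cap \tu_{\au}=\ro^{\perp}_{\tu_{\au}}$, and orthogonality descends from $\dmu$.

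The main obstacle I expect is Part I.2: both (a) the identification $\lo^{\perp}_{\dmu}=\ro^{\coprod}$, which requires the B\"okstedt--Neeman idempotent-completeness input, and (b) the right admissibility of $\ro^{\coprod}$, which invokes Brown representability and is precisely where the essential smallness / compact generation hypothesis enters. Part II.1 is slightly delicate in that stability cannot be read off directly from the decomposition triangle (one only knows the middle term lies in $\tu_{\au}$); Proposition \ref{port}(I) circumvents this by trading places via the adjunction.
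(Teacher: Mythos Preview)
Your proof is correct and follows essentially the same architecture as the paper's; Parts I.1, I.3, II.1, and II.2 are virtually identical (same use of Lemma \ref{lbsn}(1,2,3) and Proposition \ref{port}(I)).

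The only substantive divergence is in Part I.2. The paper dispatches the statement that $D^{\perp}_{\dmu}$ is a semi-orthogonal decomposition by a direct citation of Corollary 2.4 of \cite{nisao}, and then reads off $\lo^{\perp}_{\dmu}=\ro^{\coprod}$ immediately from Proposition \ref{pdec}(1) applied to the already-established decomposition $D^{\coprod}$ (together with Lemma \ref{lbsn}(3)). Your route unpacks the black box: you obtain $\lo^{\perp}_{\dmu}=\ro^{\coprod}$ first via a splitting argument and B\"okstedt--Neeman idempotent completeness, and then invoke Brown representability to produce the right adjoint to $\ro^{\coprod}\hookrightarrow\dmu$. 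Both are valid; your version is more self-contained, while the paper's is shorter and avoids the B\"okstedt--Neeman step entirely (once I.1 is done, $\lo^{\perp}_{\dmu}=(\lo^{\coprod})^{\perp}_{\dmu}=\ro^{\coprod}$ is automatic from Proposition \ref{pdec}(1)). Note also that the parenthetical ``(respectively, $\dmu$ is compactly generated by it)'' in the hypothesis of I.2 is a consequence of essential smallness, not an alternative assumption, so your phrase ``under either hypothesis'' is a slight misreading, though harmless.
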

\begin{proof}
I.1. Since objects of $\lo$ are compact in $\dmu$,  the class $\lo^\perp_{\dmu}$ is closed with respect to coproducts. Since it contains $\ro$, $\lo\perp \ro^{\coprod}$. 
Applying Lemma \ref{lbsn}(3) we obtain
 that  $\lo^{\coprod}\perp \ro^{\coprod}$.

It remains to prove the existence of decompositions of the type (\ref{edec}). Take the  set $E$ of those $M\in \obj \dmu$ such that there exists a distinguished triangle $L\to M\to R\to M[1]$ with $L\in \lo^{\coprod}$ and $R\in \ro^{\coprod}$. $E$ clearly contains $\obj \tu$ and applying Lemma \ref{lbsn}(1,2) we obtain 
 $E=\obj \dmu$. 

2. Corollary 2.4 of \cite{nisao} easily implies that 
$D^{\perp}_{\dmu}$ is a semi-orthogonal decomposition of $\dmu$ indeed. 
Is is obviously $\dmu$-adjacent both to $D$ and to 
$D^{\coprod}$.  Lastly, $\lo^{\perp}_{\dmu}=\ro^{\coprod}$ by Proposition \ref{pdec}(1) (applied to 
 $D^{\coprod}$). 

3. Since all objects of $\lo$ and $\ro$ are compact, 
 both $\lo_{D^{\perp}_{\dmu}}$ and $\ro_{D^{\perp}_{\dmu}}$  are closed with respect to $\dmu$-coproducts. Hence if $D_0=(\lo_0,\ro_0)$ then 
  $\lo_{0}^{\coprod}\perp \ro_{0}^{\coprod}$. 
  
  Consequently, it suffices to verify  that the class $C_{0}$ of extensions of elements of $\ro_{0}^{\coprod}$ by those of $\lo_{0}^{\coprod}$ coincides with $\obj \tu_0^{\coprod}$; see Proposition \ref{pdec}(1). The latter statement easily follows from Lemma \ref{lbsn}(1,2) similarly to the proof of assertion I.1. 
 
 II.1.  Since the functor $\dmu(-,N)$ ($N\in \obj \dmu$) sends $\tu$-distinguished triangles  into long exact sequences (one may also say that $S_N$ sends distinguished triangles into triangles of a certain sort), 
 $\tu_{\au}$ is 
   triangulated. 
  Next, Proposition \ref{port}(I) implies that   
 for any $M\in \obj \tu$ 
 we have $S_{L_{D^{\perp}_{\dmu}}(N)}(M)\cong S_N(R_D(M))$ and  $S_{R_{D^{\perp}_{\dmu}}(N)}(M)\cong S_N(L_D(M))$.
 Since the objects $R_D(M)$ and $L_D(M)$ belong to $\tu$, we obtain that $L_{D^{\perp}_{\dmu}}(N)$ and $R_{D^{\perp}_{\dmu}}(N)$ belong to $\tu_{\au}$ whenever $N$ does.

2. We should check 
  that any object $M$ of $\tu_{\au}$ possesses a ${D^{\perp}_{\dmu}}$-decomposition   (\ref{edec}) inside $\tu_{\au}$. This statement 
   follows from assertion II.1 
    according to Proposition \ref{pdec}(1). 

  \end{proof}

\begin{rema}\label{rlin}
1. Below we will apply our theorem in the following setting only: $R$ is a (commutative unital) Noetherian ring and $\au$ consists of those modules whose components are finitely generated and satisfy some boundedness condition; see Definition \ref{dbound}.

  Note however that one can take 
   finitely presented modules over a coherent ring instead; see Definition 2.1,  Theorem 2.4, Corollary 2.7, and Lemma 2.8 of \cite{swancoh}. 
   In particular, it appears that the arguments below that we  use for the proof of Theorem \ref{tnee2}(II.1)  generalize to
    this setting without much difficulty.  

2. Another possible generalization here is to fix an infinite cardinal $\aleph$ and take 
 $\au$ that consists of those $M=\bigoplus M^i$ such that each $M^i$ has less than $\aleph$  generators over $R$. This gives a certain "smallness" filtration on $\dmu$, which is clearly exhaustive (since $\tu$ is essentially small). 
 
 We will also 
  consider certain type of  $\au$ related to support sets  $T\subset S=\spe R$ in 
   Corollary \ref{csub} below.
 
 3. It is also easily seen that for any $\aleph$ as above the semi-orthogonal decomposition $D^{\coprod}$ restricts to the smallest triangulated subcategory of $\dmu$ that it closed with respect to coproducts of less than $\aleph$ objects and contains $\tu$. Yet the corresponding 
  decompositions seem to be less interesting in the "geometric" setting that we will consider below. Moreover, the author does not know of any arguments that would allow to combine these cardinality restrictions with any bounds on the degree. 
 

 4. In our theorem we send 
  an object $N$ of $\dmu$ into 
   the class  
   $\{S_N(M)\}$ of graded $R$-modules (where $M$ runs through objects of $\tu$). Clearly, it is possible to get "more information" from the functor represented by $N$, and this may  give descriptions of a larger class of triangulated subcategories of $\dmu$. In particular, one may 
  look at $\infty$-enhancements. This can possibly be useful for the study semi-orthogonal decompositions corresponding to certain stacks; cf. Theorem  3.0.2 of \cite{benzvi}.   Note also that differential graded enhancements were used in the proof of \cite[Theorem A.1]{karkuz}; cf. Remark \ref{rk}(2).  
   Furthermore, one may combine this approach with the "$\dmu$-free one" that is discussed in Remark \ref{rimpr}(1) below.
 
 5. Assertion I.1 is possibly well-known. Note also that in the "geometric" setting that we will study below this statement essentially coincides with  Proposition 4.2 of \cite{kuzbch}. 
\end{rema}

Theorem \ref{tamain} easily yields the existence of certain adjoint functors.

\begin{coro}\label{cadj}
Adopt the assumptions of Theorem \ref{tamain}(II). Assume that $F: \dmu\to \dmu'$ is an exact functor that respects coproducts and suppose that $\tu \subset \tu_{\au}$. 

Let $T$ be a 
subcategory of $\dmu'$ that contains the essential image $F(\tu_{\au})$. For any objects $M$ of $\dmu$ and  $N$ of $\dmu'$ 
 endow the abelian group $\dmu'(F(M),N)$ with the structure of $R$-module as follows: define the multiplication by $r\in R$ on  $\dmu'(F(M),N)$ by means of composing its elements with $F(r\id_{M})$. 

Then the restriction of $F$ to a  functor $F_T:\tu_{\au}\to T$ possesses a right adjoint if and only 
if for any 
 $M\in \obj \tu$ and $N\in \obj T$ the graded module $$
 S'_N(F(M))=\bigoplus_{i\in \z}\dmu'(F(M[-i]),N)$$ 
   belongs to $\au$.
 
 Moreover, this adjoint is exact if $T$ is a triangulated subcategory of $\dmu'$.
\end{coro}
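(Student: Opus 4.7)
The plan is to construct the right adjoint via Neeman's Brown representability for the compactly generated category $\dmu$, and then to use the definition of $\tu_{\au}$ to see that the stated condition is precisely what forces the representing objects to lie in $\tu_{\au}$.

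More precisely, for each $N\in\obj T$ consider the functor
\[
H_N:\dmu\opp\to\ab,\qquad M\longmapsto\dmu'(F(M),N).
\]
It is cohomological (since $F$ is exact and $\dmu'(-,N)$ is) and sends coproducts to products (since $F$ respects coproducts and $\dmu'(-,N)$ does). As $\dmu=\tu^{\coprod}$ with the objects of the essentially small $\tu$ compact in $\dmu$, Brown representability provides $G(N)\in\obj\dmu$ together with a natural isomorphism $\dmu(-,G(N))\cong H_N$, and functoriality of $G$ in $N$ follows by Yoneda. Naturality of this isomorphism with respect to the endomorphisms $r\id_M$ ($r\in R$) shows that it is $R$-linear for the $R$-module structure on $\dmu'(F(M),N)$ described in the statement. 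Thus for each $M\in\obj\tu$ one obtains an isomorphism of graded $R$-modules
\[
S_{G(N)}(M)=\bigoplus_{j\in\z}\dmu(M[-j],G(N))\;\cong\;\bigoplus_{j\in\z}\dmu'(F(M[-j]),N)=S'_N(F(M)),
\]
so by the very definition of $\tu_{\au}$, $G(N)\in\obj\tu_{\au}$ if and only if $S'_N(F(M))\in\obj\au$ for every $M\in\obj\tu$. Under the hypothesis of the corollary, $G$ therefore takes values in $\tu_{\au}$, and restricting the adjunction isomorphism to $M\in\obj\tu_{\au}\subset\obj\dmu$ yields the desired right adjoint to $F_T$. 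Conversely, if a right adjoint $G'$ to $F_T$ already exists then $G'(N)\in\obj\tu_{\au}$, and the same computation (using $\tu\subset\tu_{\au}$) gives $S'_N(F(M))\cong S_{G'(N)}(M)\in\obj\au$. The moreover clause is the standard fact that a right adjoint to an exact functor between triangulated categories is automatically exact.

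I expect the main non-routine point to be the applicability of Brown representability, which requires $\tu$ to be essentially small --- an assumption that is implicit in the setup of Theorem \ref{tamain}(II), since it is already used for Theorem \ref{tamain}(I.2), whose conclusion underlies Theorem \ref{tamain}(II.2). Everything else is a direct unravelling of the definitions of $\tu_{\au}$, $S_N$ and $S'_N$, together with the routine observation that naturality of the Brown representability isomorphism in the variable $M$ encodes exactly the $R$-linearity needed to translate between $S_{G(N)}(M)$ and $S'_N(F(M))$.
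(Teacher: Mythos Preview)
Your proof is correct and follows essentially the same route as the paper: the paper invokes the adjoint functor theorem for compactly generated categories (Theorems 8.3.3, 8.4.4 and Lemma 5.3.6 of \cite{neebook}) to obtain a global exact right adjoint $G:\dmu'\to\dmu$, then checks $G(T)\subset\tu_{\au}$ via the adjunction isomorphism $S_{G(N)}(M)\cong S'_N(F(M))$, which is exactly your argument phrased through Brown representability. Your flag about the essential smallness of $\tu$ is accurate and matches the paper's implicit use of compact generation.
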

\begin{proof}
If $F_T$ possesses an adjoint functor $G_T$ then for any $M\in \obj \tu_{\au}$ and $N\in \obj T$ we have $
S'_N(F(M)) \cong \bigoplus_{i\in \z} S_{G_T(N)}(M)$  (cf. Theorem \ref{tamain}(II.1)), 
 and this isomorphism is clearly an isomorphism of $R$-modules.  Since $\tu\subset  \tu_{\au}$, we obtain that  
  the graded module $S'_N(F(M))$ belongs to $\au$ whenever $M\in \obj \tu$.

Let us prove the converse implication. Since $\dmu$ is compactly generated, the functor $F$ is well known to possess an exact right adjoint  $G$; see Theorems  8.3.3 and 8.4.4 and Lemma 5.3.6 of \cite{neebook}. Thus it suffices to verify that $G$ sends $T$ inside $\tu_{\au}$. Now, for any $N\in \obj T$ if $M$ belongs to $\tu$ then   
  $S_{G_T(N)}(M)\cong S'_N(F(M))$; hence $G(N)$ belongs to $ \tu_{\au}$ indeed. 
\end{proof}

We also describe an argument that can be used to restrict semi-orthogonal decompositions of Theorem \ref{tamain}(II)  to "large enough" subcategories of  $\tu_{\au}$. We will not apply it in this paper.

 \begin{pr}\label{ptu}
 
 Adopt the assumptions and notation of Theorem \ref{tamain}(II).
  Moreover, suppose that $\tu'$ is a triangulated subcategory of $\tu_{\au}$ such that the restricted Yoneda functor $\yu_{\tu'}:\tu'\to \adfur(\tu,R-\modd)$  (see Definition \ref{dadm}(\ref{irlin}))  that sends $N\in \obj \tu'$ into the restriction of $\dmu(-,N)$ to $\tu$ satisfies the following conditions: it is full and its essential image coincides with the image of the (similarly defined) restricted Yoneda functor $\yu_{\tu_{\au}}: \tu_{\au}\to \adfur(\tu,R-\modd)$.

Then $D^{\perp}_{\tu'}$ is a decomposition of $\tu'$.  
\end{pr}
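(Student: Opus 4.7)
The aim is, for every $N\in\obj\tu'$, to produce a distinguished triangle $\tilde L\to N\to C\to \tilde L[1]$ in $\tu'$ with $\tilde L\in \lo^{\perp_{\tu'}}$ and $C\in \ro^{\perp_{\tu'}}$. The orthogonality $\lo^{\perp_{\tu'}}\perp \ro^{\perp_{\tu'}}$ will be inherited by restriction from the corresponding orthogonality inside the decomposition $D^{\perp}_{\tu_{\au}}$ of $\tu_{\au}$ supplied by Theorem~\ref{tamain}(II.2); only the existence of the triangle requires an argument.

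My strategy is to import just the left half of the $\tu_{\au}$-decomposition of $N$ into $\tu'$ via the Yoneda hypotheses, and let the right half take care of itself. I would begin with the $D^{\perp}_{\tu_{\au}}$-decomposition $L\to N\to R\to L[1]$ of $N$ inside $\tu_{\au}$, so $L\in \lo^{\perp_{\tu_{\au}}}$ and $R\in \ro^{\perp_{\tu_{\au}}}$. Using the coincidence of essential images of $\yu_{\tu'}$ and $\yu_{\tu_{\au}}$, fix $\tilde L\in \obj\tu'$ together with an isomorphism $\iota_L\colon \yu_{\tu'}(\tilde L)\xrightarrow{\sim}\yu_{\tu_{\au}}(L)$ in $\adfur(\tu,R-\modd)$; since this $\yu$-image vanishes on $\obj\lo$, automatically $\tilde L\in \lo^{\perp_{\tu'}}$. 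Fullness of $\yu_{\tu'}$ then lifts the composite natural transformation $\yu(L\to N)\circ\iota_L$ to a morphism $\alpha\colon\tilde L\to N$ in $\tu'$, and completing $\alpha$ inside the triangulated subcategory $\tu'$ produces a distinguished triangle $\tilde L\xrightarrow{\alpha}N\to C\to \tilde L[1]$ with $C\in \obj\tu'$.

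The remaining point is $C\in \ro^{\perp_{\tu'}}$, i.e., $\dmu(X,C)=0$ for every $X\in \obj\ro$. The original semi-orthogonality $\obj\lo\perp\obj\ro$ puts $\obj\ro$ inside $\obj\lo^{\perp_{\tu_{\au}}}$, while $R\in \ro^{\perp_{\tu_{\au}}}$ by construction, so the orthogonality in $D^{\perp}_{\tu_{\au}}$ forces $\dmu(X,R[n])=0$ for every $n\in\z$. Applying $\dmu(X,-)$ to $L\to N\to R\to L[1]$ thus produces isomorphisms $\dmu(X,L[n])\xrightarrow{\sim}\dmu(X,N[n])$ for each $n$. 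Evaluating $\iota_L$ on the shifts of $X$, which remain in $\tu$ because $\ro$ is closed under shifts, combined with the identity $\yu(\alpha)=\yu(L\to N)\circ\iota_L$, shows that $\alpha[n]_*\colon\dmu(X,\tilde L[n])\to \dmu(X,N[n])$ is also an isomorphism for every $n\in\z$. Feeding this into the long exact sequence of $\tilde L\to N\to C\to \tilde L[1]$ squeezes out $\dmu(X,C)=0$, as required.

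The mechanics amount to a five-lemma style comparison of two long exact sequences; the main obstacle to sidestep is the temptation to compare $C$ with a "right replacement" $\tilde R\in\tu'$ of $R$, which would require fullness of $\yu_{\tu_{\au}}$ — a property that is not assumed in the proposition (and in general fails). As set up, $C$ belongs to $\tu'$ by triangulatedness and to $\ro^{\perp_{\tu'}}$ by the LES argument, so fullness of $\yu_{\tu'}$ together with the equality of essential images is exactly the data used, matching the proposition's hypotheses.
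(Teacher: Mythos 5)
Your proposal is correct and follows essentially the same route as the paper's own proof: take the $D^{\perp}_{\tu_{\au}}$-decomposition triangle of $N$, use the coincidence of essential images to replace its left term by an object of $\tu'$, lift the comparison morphism to $N$ via fullness of $\yu_{\tu'}$, and verify that the cone lies in $\ro^{\perp}$ by the long-exact-sequence argument using $\obj\ro\perp\{R[n]\}$. Your explicit remark that one must not attempt to replace $R$ itself (which would need fullness of $\yu_{\tu_{\au}}$) is exactly the point the paper's proof is built around.
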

 \begin{proof}
 For an object $N$ of $\tu'\subset \tu_{\au}$ we take its  $D^{\perp}_{\tu_{\au}}$-decomposition triangle (see (\ref{edec}) and Theorem \ref{tamain}(II.2)): $L\stackrel{h}{\to}N\to R\to L[1]$.
 
 According to our assumptions, we can choose $L'\in \obj \tu'$ and $h'\in \tu'(L',N)$ such that $\yu_{\tu_{\au}}(h')\cong \yu_{\tu_{\au}}(h)$ (in the category of objects over $\yu_{\tu_{\au}}(N)$). Clearly, $L'\in \lo^{\perp}_{\tu'}$.
 It remains to verify that $\co(h')\in \ro^\perp_{\dmu}$.
 
 If $M\in \ro$ then $M[i]\perp R$ for any $i\in \z$; hence the homomorphisms $\dmu(M[i],h)\cong \dmu(M[i],h')$ are 
  bijective. Looking at the exact sequence $$\dmu(M,L')\to \dmu(M,N)\to \dmu(M,\co(h'))\to  \dmu(M[1],L')\to \dmu(M[1],N)$$ we obtain that $M\perp\co(h')$. Thus $h$ can be completed 
 to a $D^{\perp}_{\tu'}$-decomposition triangle for $N$.
 \end{proof}

\begin{rema}\label{rptu}
This statement may be combined with Corollary 0.5 of \cite{neesat} to obtain a "substitute" for Theorem \ref{tison} below (cf. Remark \ref{rison}) that would be sufficient to establish Theorem \ref{tone}(I.1).

However, the author does not have any "interesting" examples for this proposition, that is,  with $\tu'$ 
  distinct from $\tu_{\au}$ (recall that $\tu'$ is a strict subcategory). 
 \end{rema}

\subsection{Main geometric  applications}\label{sgeom}
Till the end of the paper we will always assume that the following condition is fulfilled.

\begin{ass}\label{assm} $R$ is a commutative unital Noetherian ring and $X$ is a scheme proper over $S=\spe R$.\end{ass}

In some of the statements we will also need the following "very common" condition on $X$.  

\begin{ass}\label{assa}
 {\it Regular alterations} (see  Remark \ref{rgeom}(1) below) exist for all 
 integral closed subschemes of $X$.\end{ass}

\begin{defi}\label{dbound}
We will write $\au^-$ (resp. $\au^+$) 
 for the following 
   subcategories of  $\au^u=R-\mmodd^{\z}$ (see Definition \ref{dadm}(\ref{imod})): 
   $M=\bigoplus M^i\in \obj \au^-$ (resp. $\au^+$)  whenever $M^i=\ns$ for $i\gg 0$ (resp. for 
   $i\ll 0$). 
   
    Moreover, $\au^b$ is the 
     subcategory corresponding to $\obj \au^-\cap \obj \au^+$
\end{defi}

Let us describe 
  some examples for the assumptions of 
  Theorem \ref{tamain}. We should recollect some statements from \cite{stacksd} that allow us to apply the results of ibid. to various categories of coherent sheaves.

\begin{rema}\label{rstacks}
The "main" derived categories of  \cite{stacksd} are the derived categories of $\mathcal{O}_X$-modules. However,
$D(\mathcal{O}_X)$ contains a full triangulated subcategory $D_{\operatorname{Qcoh}}(\mathcal{O}_X)$ consisting of those complexes whose cohomology is quasi-coherent. 
Now, if $X$ is Noetherian then the obvious functor $\dux=D({\operatorname{Qcoh}}(X))\to D_{\operatorname{Qcoh}}(\mathcal{O}_X)$ is an equivalence; see  \cite[Tag 09T4]{stacksd}. It clearly follows that $D_{\operatorname{coh}}({\operatorname{Qcoh}}(X))\cong D_{\operatorname{coh}} (\mathcal{O}_X)$ (cf.  Definition \ref{dadm}(\ref{icoh}) or  \cite[Tag 06UP]{stacksd} for categorical notation of this sort).

Moreover, the direct and inverse image functors (that is, $f_*: D(\mathcal{O}_X) \leftrightarrows D(\mathcal{O}_Y):f^* $ for a quasi-separated and quasi-compact morphism $f:X\to Y$ of schemes) and tensor products respect the subcategories $D_{\operatorname{Qcoh}}(\mathcal{O}_{-})$ of $D(\mathcal{O}_{-})$; see  \cite[Tags 08DW, 08D5 08DX]{stacksd}.
These observations allow us to apply results of ibid. to the categories $D(\operatorname{Qcoh}(-))$ and their subcategories mentioned in Definition \ref{dadm}(\ref{icoh}) instead of $D_{\operatorname{Qcoh}}(\mathcal{O}_{-})\subset D(\mathcal{O}_{-})$ and the corresponding triangulated subcategories that are defined in terms of cohomology of complexes of sheaves of modules (similarly to  Definition \ref{dadm}(\ref{icoh})). 
\end{rema}

\begin{theo}\label{tcond}
I. 
 Take  $\tu=\tux$ and $\dmu=\dux$ (see Definition \ref{dadm}(\ref{icoh})).

1. Then  $\dmu$ is compactly generated by  $\tu$.

2. Let the symbol s be equal to $u,+,-$, or $b$. Then we have $\dmu^s\subset \tu_{\au^s}$.

3. Moreover, 
 this inclusion is an equality  if either $X$ is projective over $S$ 
   or if $s\in \{b,-\}$. 

II. Take $\dmu$ 
to be  
the mock homotopy category $\kmp X)$ of projectives over $X$ as defined in   \cite[Definition 3.3]{mur}, and $\tu$ to be the essential  image of  
$\tub{}\opp$ under the 
 functor $(-)^{\circ} U_{\lambda}$. 

1. Then  $\dmu$ is compactly generated by  $\tu$ and the restriction of $(-)^{\circ} U_{\lambda}$ to 
$\tub$ is a full embedding. 

2. If
 $X$ satisfies Assumption \ref{assa}
 then the corresponding category $\tu_{\au^b}$ 
   equals  the essential image 
    $(-)^{\circ} U_{\lambda}(\tux\opp)$. 
\end{theo}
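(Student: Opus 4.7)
My plan is to treat the four assertions in sequence, reducing each to well-known compact-generation theorems together with the finiteness content of Theorem \ref{tnee2}(1) and Corollary 0.5 of \cite{neesat}; genuine work beyond citation is needed only for the non-trivial half of assertion II.2.

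Assertion I.1 is the classical Bondal--Van den Bergh / Thomason result that $\dux$ is compactly generated by $\tux$ under Assumption \ref{assm}. For I.2, fix $N \in D^s_{coh}(X)$ and $M \in \tux$. Finite generation of each $\dux(M[-j], N)$ over $R$ follows by a two-sided truncation of $N$: since $M$ is cohomologically bounded in some range $[a,b]$, for each $j$ one can choose $m \le n$ with $\homm(M, \tau_{>n}N[j]) = \homm(M, \tau_{<m}N[j]) = 0$, reducing to $\homm(M, \tau_{[m,n]}N[j])$, which is finitely generated over $R$ by Grothendieck's finiteness theorem applied to $M^{\vee} \otimes^L \tau_{[m,n]}N[j]$ on the proper $X/\spe R$. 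The vanishing ranges defining $\au^{\pm}$ are immediate from cohomological amplitude considerations (if $N$ has cohomology in $[c, \infty)$ then $\homm(M, N[j]) = 0$ for $j \ll 0$, and dually). Assertion I.3 is then by citation: the projective case is both halves of Theorem \ref{tnee2}(1) at once, and the cases $s \in \{b,-\}$ under only properness are Corollary 0.5 of \cite{neesat}.

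Assertion II.1 reproduces Murfet's / Neeman's compact generation of $\kmp X)$ together with the identification of its compact subcategory with $\tub\opp$ via $(-)^{\circ} U_{\lambda}$; this immediately gives both the compact generation and the fullness of $(-)^{\circ} U_{\lambda}$ on $\tub$. The inclusion $(-)^{\circ} U_{\lambda}(\tux\opp) \subset \tu_{\au^b}$ of II.2 is then formal: unwinding II.1, the relevant mock Hom becomes $\dux(M, P[-j])$ for $M \in \tub$ and $P \in \tux$, and this lies in $\au^b$ by assertion I.2 for $s=b$ applied with $M$ and $N$ swapped (legitimate because $P$ is perfect).

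The converse inclusion $\tu_{\au^b} \subset (-)^{\circ} U_{\lambda}(\tux\opp)$ is the main obstacle and the only step genuinely using Assumption \ref{assa}. I would argue by noetherian induction on support: given $N \in \tu_{\au^b}$ corresponding to $F \in \tub$, pick an irreducible component $Z \subset \operatorname{Supp} F$ and a regular alteration $\pi\colon X'' \to Z$ (this is where Assumption \ref{assa} enters); the hope is that the $\au^b$-condition survives $L\pi^{*}$, and since $D_{perf}(X'') = D^b_{coh}(X'')$ on the regular $X''$ the pullback is automatically perfect, whence descent along $\pi$ combined with the approximability machinery of \cite{neesat} should force $F$ to be perfect on a dense open of $Z$; noetherian induction on the closed complement then concludes. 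The technical heart is the compatibility of the $\au^b$-condition with $L\pi^{*}$ and with the excision triangles in $\dmu$, together with the descent of perfection from $X''$ back to $X$; I expect these to reduce, via a spectral-sequence calculation, to statements already accessible through the approximability results of \cite{neesat} and \cite{neetc}.
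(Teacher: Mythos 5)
Parts I and II.1 of your proposal follow the paper's route: I.1 and II.1 are citations (compact generation of $\dux$ by $\tux$; Murfet's Theorems 4.10 and 7.4 for $\kmp X)$), and your truncation-plus-Grothendieck-finiteness argument for I.2 is the same computation the paper performs via $Rp_*(M^{\vee}\otimes^L N)$ and finite Tor-amplitude. One elision in I.3, though: for $s\in\{b,-\}$, Corollary 0.5 of \cite{neesat} only says that the homological functor $\yu(N)$ is \emph{represented} by some $N'\in \obj D^s_{coh}(X)$; it does not by itself give $N\cong N'$ in $\dux$, since the restricted Yoneda functor to $\adfur(\tux\opp,R-\modd)$ is not conservative on all of $\dux$. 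The paper closes this with an approximability argument (Theorem \ref{tison}(I.1,III), cf. Remark \ref{rison}(2)); you need some such step.

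The genuine gap is the hard inclusion in II.2. Your sketch begins with ``given $N\in\tu_{\au^b}$ corresponding to $F\in\tub$'', but a general object $N$ of $\kmp X)$ lying in $\tu_{\au^b}$ does not come with an object of $\tub$ attached to it: all you have is a finite homological functor on $\tub$, and producing a (compact, indeed perfect) object representing that functor is precisely the content to be proved, not a starting point. From there the argument is a programme rather than a proof --- the compatibility of the $\au^b$-condition with $L\pi^*$, the ``descent of perfection'' along the alteration, and the passage from a dense open to all of $Z$ are each left as hopes, and together they amount to reproving Theorem 0.2 of \cite{neetc}. The paper instead simply quotes that theorem: under Assumption \ref{assa} every homological functor $F:\tub\to R-\modd$ with $\bigoplus_{i}F(M[i])$ finitely generated is represented by an object of $\tux\opp$. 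Hence for $N\in\obj\tu_{\au^b}$ there is $N'\in (-)^{\circ}U_{\lambda}(\tux\opp)$ with the same restricted functor; since $N'$ lies in $\tu$ itself, ordinary Yoneda produces an actual morphism $f:N'\to N$ realizing the isomorphism, and $\obj\tu\perp\{\co(f)\}$ forces $\co(f)=0$ by Lemma \ref{lbsn}(3) and compact generation. If you do not want to cite \cite{neetc}, you must supply complete proofs of exactly the steps your sketch defers.
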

\begin{proof}
I.1. Since $R$ is noetherian, $X$ is a noetherian separated scheme; thus the compact generation statement is well-known (see 
 \cite[Tags 09M1, 09IS]{stacksd} along with Remark \ref{rstacks} (or Theorem 3.1.1 of \cite{bvdb}) 
  and  Lemma \ref{lbsn}(4). 

2. This statement is easy and probably well-known; our argument in the proof Theorem \ref{tnee2}(II.1) (in \S\ref{sunb} below) actually establishes it in this generality as well.

 3. In the case where $p$ is projective the assertion is just a re-formulation of Theorem \ref{tnee2}(II.1).
 
 In the cases 
 $s= b$ and $s=-$ 
 Corollary 0.5 of \cite{neesat} implies  
  the following: for any $N\in \obj \tu_{\au^s}$ there exists $N'\in \obj \dmu^s$ such that $\yu(N)\cong \yu(N')$; here we use the notation of Theorem \ref{tnee2}(I). To prove that $N\cong N'$ one can either apply 
  some more results of ibid. 
  (see Remark \ref{rison}(2) below) 
   or use Theorem  
   \ref{tison}(I.1,III).



II.1. These statements are given by  
Theorems 4.10 and 
7.4 of \cite{mur};  
 see also Proposition 7.2 of ibid. for the notation.

2. 
If $X$ satisfies Assumption \ref{assa} then Theorem 0.2 of \cite{neetc} says that the objects of 
$\tux\opp\subset \tub{}\opp$ represent all homological functors 
$F:
\tub\to R-\modd$ such that $\bigoplus_{i\in \z}F(M[i])$ is a finitely generated $R$-module for any $M\in \obj \tub$. 
 It follows that for any $N\in \obj 
 \tu_{\au^b}$ there exists $N'\in  (-)^{\circ} U_{\lambda}(\tux\opp)$ such that the restrictions of the functors $\dmu(-,N)$ and $\dmu(-,N')$ to $\tu$ are isomorphic. Since $N'$  belongs to 
  $\tu$,  $\id_{N'}$ yields a canonical morphism $f:N'\to N$, and 
   we have $\obj \tu\perp \co(f)$. It easily follows that $\co(f)=0$; see 
Lemma \ref{lbsn}(3).    Hence    $\tu_{\au^b}$  equals the essential image $(-)^{\circ} U_{\lambda}(\tux\opp)$ indeed.
\end{proof}

\begin{rema}\label{rcadj}
1. Clearly, one can combine Corollary \ref{cadj} with Theorem \ref{tcond}(I.3) to obtain an if and only if 
 criterion for the existence of a right adjoint to the corresponding restriction 
 $F_T:\dmu^s\to T$, where  $F:\dux\to \dmu$ is an exact  functor that respects coproducts, $F(\dmu^s)\subset T$, and   $(X,s)$ is any couple that satisfies the assumptions of Theorem \ref{tcond}(I.3).
 
 2. One can also construct a category $\dmu$ that satisfies the assumptions of Theorem \ref{tcond}(II) using certain abstract nonsense; see Corollary 3.7 of \cite{dgk} and Proposition 4.2.5 of \cite{bdec}.
  However, the author conjectures that all possible choices of $\dmu$ that satisfy the conditions of our theorem 
   are equivalent.
\end{rema}

 Now we 
  pass  to semi-orthogonal decompositions; see Definitions \ref{dsort} and \ref{dadm}.

\begin{theo}\label{tone}

I.1. 
 For any semi-orthogonal decomposition $D$ of $\tux$ the couples $D^{\perp}_{\tub}$, $D^{\perp}_{\dmu^-}$, and $D^{\perp}_{\dux}$ give semi-orthogonal decompositions of $\tub$, $\dmu^-$, and $\dux$, respectively. 
 
Moreover, $D^{\perp}_{\dux}=(D^{\perp}_{\tub})^{\coprod}$; 
 consequently,  
 $D^{\perp}_{\dmu^-}= (D^{\perp}_{\tub})^{\coprod}_{\dmu^-}$. 

Furthermore, if $X$ is projective over $S$ (see Definition \ref{dadm}(\ref{iproj})) then 
$D^{\perp}_{\tup}$ 
 is a  decomposition  of $\tup$ and $D^{\perp}_{\dmu^+}$ is a decomposition  of $\dmu^+$. 

2. The maps $D\mapsto D^{\perp}_{\tub}$, $D\mapsto D^{\perp}_{\dmu^-}$, $D\mapsto D^{\perp}_{\dux}$ are injective, and the following assumptions on semi-orthogonal decompositions $D_1$ and $D_2$ of $\tux$ are equivalent:

(a) $D_1\llo D_2$;

(b) $D_2{}^{\perp}_{\tub}\llo (D_1)^{\perp}_{\tub}$;

(c) $D_2{}^{\perp}_{\dmu^-}\llo (D_1)^{\perp}_{\dmu^-}$;

(d) $(D_2)^{\perp}_{\dux}\llo (D_1)^{\perp}_{\dux}$.

II. Suppose in addition that  $X$ is either regular of finite Krull dimension or satisfies Assumption \ref{assa}. 
  
 1.  Then the 
  map $D\mapsto  D^{\perp}_{\tub}$ 
gives all  semi-orthogonal decompositions of  $\tub$, and the inverse correspondence  is of the form $E\mapsto {}^\perp_{\tup} E$.  

Consequently, 
the couple $E^{\coprod}$ gives a semi-orthogonal decomposition of $\dux$ that coincides with $ (\perpp E)^{\perp}_{\dux}$, and this decomposition restricts to the
 semi-orthogonal decomposition $(\perpp E)^{\perp}_{\dmu^-}$ of $\dmu^-$.

 Moreover, if $X$ is projective over $S$ then $E^{\coprod}$  restricts to $\tup$ and $\dmu^+$.
 
2. 
 The map $\emu\mapsto \emu\cap \tux$ gives a one-to-one correspondence between   right admissible subcategories of $\tub$ and left admissible  subcategories of $\tux$.

\end{theo}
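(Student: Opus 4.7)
The plan is to deduce this assertion formally from parts (I.2) and (II.1) of the theorem combined with Proposition \ref{pdec}; this is exactly the route advertised by Remark \ref{rdec2}. No new geometric input is needed beyond what is already packed into (II.1).

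More concretely, I would proceed in three steps. First, Proposition \ref{pdec}(2) identifies right admissible subcategories $\emu \subset \tub$ bijectively with semi-orthogonal decompositions $D_\emu = (\emu, \emu^{\perp}_{\tub})$ of $\tub$, and similarly identifies left admissible subcategories $\ro \subset \tux$ bijectively with decompositions $D^\ro = ({}^{\perp_{\tux}}\ro,\, \ro)$ of $\tux$. Second, by combining (I.2) (injectivity of $D\mapsto D^{\perp}_{\tub}$) with (II.1) (which asserts that every decomposition of $\tub$ arises this way), the assignment $D \mapsto D^{\perp}_{\tub}$ is a bijection between the class of semi-orthogonal decompositions of $\tux$ and that of $\tub$. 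Third, for any decomposition $D = (\lo, \ro)$ of $\tux$ one has
$$ \lo^{\perp}_{\tub} \cap \tux \;=\; \lo^{\perp}_{\tux} \;=\; \ro, $$
where the first equality is tautological from the definition of $(-)^{\perp}_{\tu'}$ in Definition \ref{dadm}(\ref{iperp}), and the second is Proposition \ref{pdec}(1) applied to $D$ inside $\tux$.

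Chaining these three bijections, the composite map sends a right admissible subcategory $\emu\subset\tub$ (viewed as $D_\emu = (\emu, \emu^{\perp}_{\tub})$) to the unique $D = (\lo,\ro)$ of $\tux$ with $D^{\perp}_{\tub} = D_\emu$ (so that $\emu = \lo^{\perp}_{\tub}$), and then to the left admissible subcategory $\ro$. By the displayed computation $\ro = \emu \cap \tux$, so this composite is precisely the map $\emu \mapsto \emu \cap \tux$ of the statement. Being a composition of bijections, it is itself a bijection, proving the claim.

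There is no genuine obstacle in this argument; it is essentially a diagram chase between three pairs of mutually inverse dictionaries. The substantive work has already been done in (II.1), which is where Assumption \ref{assa} (or regularity of $X$) is invoked via Theorem \ref{tcond}(II.2) to guarantee that every decomposition of $\tub$ is induced from $\tux$. Once that surjectivity is granted, Proposition \ref{pdec} and the tautological identity $\lo^{\perp}_{\tub}\cap \tux = \lo^{\perp}_{\tux}$ suffice to translate the bijection of decompositions into the desired bijection of admissible subcategories.
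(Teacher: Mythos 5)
Your argument for the final assertion (part II.2) is correct, and for that assertion it is essentially the paper's own proof: the paper likewise composes the dictionary of Proposition \ref{pdec}(2) between admissible subcategories and decompositions with the bijection $D\mapsto D^{\perp}_{\tub}$ supplied by parts I.2 and II.1, and concludes via the tautology $\emu^{\perp}_{\tux}=\emu^{\perp}_{\tub}\cap\tux$ (the paper routes the intermediate step through right admissible subcategories of $\tux$ rather than through decompositions, but this is the same diagram chase).

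The gap is one of scope: the statement under consideration is the whole of Theorem \ref{tone}, and your proposal proves only II.2 while explicitly taking I.1, I.2 and II.1 as inputs. Those parts carry all the mathematical content, and none of them is formal. Part I.1 requires Theorem \ref{tamain}(I, II.2) combined with Theorem \ref{tcond}(I) (compact generation of $\dux$ by $\tux$, and the identification of $D^{s}_{coh}(X)$ with the category $\tu_{\au^s}$, whose projective case is Theorem \ref{tnee2}(1)). Part I.2 requires Proposition \ref{port}(II). Part II.1 --- precisely the surjectivity your chain of bijections leans on --- requires either the mock homotopy category $\kmp X)$ of projectives together with Neeman's Theorem 0.2 of \cite{neetc} (via Theorem \ref{tcond}(II) and Theorem \ref{tamain}) when Assumption \ref{assa} holds, or Grothendieck duality (Corollary \ref{cdual}) in the regular case, followed by the comparison $(\perpp E)^{\perp}_{\tub}=E$ via Proposition \ref{port}(II.1). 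If the intended target were only assertion II.2, your write-up would be complete and faithful to the paper; as a proof of the full theorem it is circular, since it assumes the parts where the actual work happens.
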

\begin{proof}
I.1.  Combining Theorem \ref{tamain}(I,II.2)  with 
  Theorem \ref{tcond} 
    we immediately obtain that $D^{\perp}_{\tub}$, $D^{\perp}_{\dmu^-}$, and $D^{\perp}_{\dux}$ give semi-orthogonal decompositions of the corresponding categories, indeed. 
 This is also true for $D^{\perp}_{\tup}$  and $D^{\perp}_{\dmu^+}$ whenever $X$ is projective over $S$.
 
 Next, recall that $\tux\subset \tub\subset 
  \dux$. Hence $\tux^{\coprod}=\tub{}^{\coprod}=\dux$ (see Theorem \ref{tcond}(I.1)); 
   thus $D^{\perp}_{\dux}=(D^{\perp}_{\tub})^{\coprod}$  by Theorem \ref{tamain}(I.3). 
  It clearly follows that  $D^{\perp}_{\dmu^-}= (D^{\perp}_{\tub})^{\coprod}_{\dmu^-}$.

 
 2. Immediate from Proposition \ref{port}(II).

II.1. We claim that for any semi-orthogonal decomposition $E$ of $\tub$ 
 the couple $\perpp E=({}^{\perp}_{\tux}\lo_E,{}^{\perp}_{\tux}\ro_E)$ is a decomposition of  $\tux$. 

In the case where $X$ satisfies Assumption \ref{assa} this statement follows from Theorem \ref{tamain}(I,II.2) combined with Theorem \ref{tcond}(II). 
If $X$ is regular of finite Krull dimension then the statement is given by  Corollary \ref{cdual}(I) below.

  Next,  compare the decomposition  $(\perpp E)^{\perp}_{\tub}$ (provided by assertion I.1) with $E$ using
  Proposition \ref{port}(II.1); we obtain $(\perpp E)^{\perp}_{\tub}=E$. 
   Applying assertion I.1 
 we deduce all the remaining statements in our assertion. 
 
2. Combining  
 assertion II.1 with Proposition \ref{pdec} 
  we obtain that the correspondence $\emu\mapsto \emu^{\perp}_{\tux}$ (resp. $\emu\mapsto \emu^{\perp}_{\tub}$) gives a bijection between the class of right admissible subcategories of $\tux$ and the class of   left  admissible subcategories of $\tux$ (resp. of right admissible subcategories of $\tub$). We conclude the proof by noting that $\emu^{\perp}_{\tux}=\emu^{\perp}_{\tub}\cap \tux$.
 \end{proof}

\begin{rema}\label{rgeom} 

1.  Recall that alterations were introduced in \cite{dej}; regular alterations generalize Hironaka's resolutions of singularities. Being more precise, a  regular alteration for a scheme $Z$ is a proper surjective morphism $Y\to Z$ that is generically finite 
and such that $Y$ is regular and finite dimensional. 

 Since 
  resolutions of singularities exist for arbitrary 
   quasi-excellent $\spe\q$-schemes according to  Theorem 1.1 of \cite{temr}, part II of our proposition can be applied whenever $R$ is a quasi-excellent noetherian $\q$-algebra. Moreover, 
 Assumption \ref{assm} is fulfilled whenever $X$ is of finite type over a scheme $B$ that is quasi-excellent of dimension at most $3$; see Theorem 1.2.5 of \cite{tema}.


2. 
It appears to be no easy way to prove that semi-orthogonal decompositions of  $\tub$   extend to $\dmu^-\subset \tup$  (cf. part II.1 of our theorem).


Moreover, the only non-trivial (cf. Remark \ref{rdual}(2) below)
 extension statement of this sort known to the author 
 is Theorem 6.2 of \cite{schnurdb} which relies on certain "geometric" assumptions on the initial decomposition.

  3. Recall also that  Corollary 1.12 of \cite{orl6} treats decompositions 
   of $\tub$  whose components are admissible in the sense of Definition \ref{dadm}(\ref{iadm}). This  additional restriction allows to apply arguments (in the proof of Proposition 1.10 of ibid.) that are rather similar to our ones (but avoid "auxiliary" categories) to obtain that decompositions of this sort restrict to $\tux$.
  It is worth noting that Proposition 1.10 and   Corollary 1.12 of ibid. extend to our $R$-linear setting without any difficulty.

 Note also that the arguments of the current paper were not inspired by ibid.; cf. \S\ref{swt} below for more detail on this matter. 
 \end{rema}


Let us  now 
 consider certain support subcategories.

\begin{coro}\label{csub}
Let $\bu$ be an exact
  abelian subcategory of $R-\modd$; adopt the notation and assumptions of  Theorem \ref{tcond}(I.3). 

1. Then for any decomposition $D$ of $\tux$ the couple $D^{\perp}_{\tu_{\bu^\z}}$ gives a decomposition of the corresponding category $\tu_{\bu^\z}$, and this decomposition restricts to $\tu_{\au^s}\cap \tu_{\bu^\z}$ for any $s$ as in our theorem. 

2. Assume that $\bu$ consists of $R$-modules supported on $T$, where $T$ is a union of closed subsets of $S=\spe R$ (see 
 \cite[Tags 00L1, 01AT]{stacksd}).
Then    $\tu_{\bu^\z}$ consists of all those 
   objects of $\dux$ 
  the sections of whose cohomology sheaves (note that those are $R$-modules) 
   are supported on $T$.
\end{coro}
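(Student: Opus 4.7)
The plan splits into two independent parts.

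For part (1), the strategy is to apply Theorem~\ref{tamain}(II) twice. First one checks that the subcategory $\bu^{\z}$ of $R-\modd^{\z}$ consisting of graded modules with every component in $\bu$ is an exact abelian subcategory stable under the shift $M\mapsto M[1]$; both properties are inherited componentwise from the corresponding properties of $\bu\subset R-\modd$. Combined with Theorem~\ref{tcond}(I.1), which ensures that $\dux=\tux^{\coprod}$ and that $\tux$ consists of compact objects, Theorem~\ref{tamain}(II) applied to $\au=\bu^{\z}$ yields that $D^{\perp}_{\tu_{\bu^{\z}}}$ is a decomposition of $\tu_{\bu^{\z}}$. For the restriction, set $\emu=\au^{s}\cap\bu^{\z}$ inside $R-\modd^{\z}$; this is again exact abelian and shift-stable, and directly from the definition of $\tu_{?}$ one has $\tu_{\emu}=\tu_{\au^{s}}\cap\tu_{\bu^{\z}}$. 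A second application of Theorem~\ref{tamain}(II) produces a decomposition $D^{\perp}_{\tu_{\emu}}$ of this intersection whose components are $\lo^{\perp}_{\dux}\cap\tu_{\emu}$ and $\ro^{\perp}_{\dux}\cap\tu_{\emu}$; since $\tu_{\emu}\subset\tu_{\bu^{\z}}$, these visibly equal the intersections of the components of $D^{\perp}_{\tu_{\bu^{\z}}}$ with $\tu_{\au^{s}}$, so $D^{\perp}_{\tu_{\bu^{\z}}}$ restricts to $\tu_{\au^{s}}\cap\tu_{\bu^{\z}}$ as claimed.

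For part (2), one proves the two inclusions between $\tu_{\bu^{\z}}$ and the full subcategory of objects of $\dux$ supported on $p\ob(T)$. Suppose first that $N\in\obj\dux$ is supported on $p\ob(T)$ and take $M\in\obj\tux$. Since $M$ is perfect, $R\mathcal{H}om_{\mathcal{O}_{X}}(M,N)\simeq M^{\vee}\otimes^{L}_{\mathcal{O}_{X}}N$ has support contained in $\operatorname{supp}(N)\subset p\ob(T)$; because $p$ is proper and $T$ is a union of closed subsets of $S$, each cohomology module of $Rp_{*}R\mathcal{H}om(M,N)$, which computes $\dux(M[-j],N)$, is then supported on $T$. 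Hence $S_{N}(M)\in\bu^{\z}$ and $N\in\tu_{\bu^{\z}}$. Conversely, assume $N\in\tu_{\bu^{\z}}$ and pick a prime $\mathfrak{p}\notin T$. Base change along the flat map $R\to R_{\mathfrak{p}}$ produces $X_{\mathfrak{p}}=X\times_{S}\spe R_{\mathfrak{p}}$ and the pulled-back complex $N_{\mathfrak{p}}$; for every perfect $M$ on $X$, the compactness of $M$ together with flat base change yields a natural isomorphism $\dmu(X_{\mathfrak{p}})(M_{\mathfrak{p}},N_{\mathfrak{p}}[j])\cong\dux(M,N[j])\otimes_{R}R_{\mathfrak{p}}$, and the right-hand side vanishes by hypothesis because $\dux(M,N[j])$ is supported on $T$ while $\mathfrak{p}\notin T$. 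Since the pullbacks $M_{\mathfrak{p}}$ compactly generate $D_{qcoh}(X_{\mathfrak{p}})$, this forces $N_{\mathfrak{p}}=0$, so $N$ vanishes at every point of $X$ over $\mathfrak{p}$; as $\mathfrak{p}\notin T$ was arbitrary, $N$ is supported on $p\ob(T)$.

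The main obstacle is the flat-base-change identification in the last paragraph: the ring map $R\to R_{\mathfrak{p}}$ is not of finite type, so the cleanest route is to write $R_{\mathfrak{p}}=\varinjlim_{f\notin\mathfrak{p}}R[1/f]$, obtain the identification at the level of each principal localization (where $X\times_{S}D(f)\hookrightarrow X$ is an honest open immersion and the claim is routine from compactness of perfect complexes), and then take the filtered colimit. Compact generation of $D_{qcoh}(X_{\mathfrak{p}})$ by pulled-back perfect complexes is obtained by the same colimit argument applied to Theorem~\ref{tcond}(I.1) over each $X\times_{S}D(f)$.
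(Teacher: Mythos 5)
Your proof of part 1 is the paper's: both reduce to observing that $\bu^\z$ and $\au^s\cap\bu^\z$ satisfy the hypotheses on $\au$ in Theorem \ref{tamain}(II), together with the tautological identity $\tu_{\au^s\cap\bu^\z}=\tu_{\au^s}\cap\tu_{\bu^\z}$. For part 2 the underlying mechanism is also the same --- localize at a prime $s_0\notin T$, commute the localization past $\dux(M,-)$ using compactness of $M$, and use compact generation to translate ``all Hom-modules die after localization'' into ``the object dies after localization'' --- but you package it differently. The paper stays inside $\dux$: it forms $C\otimes_R R_{s_0}$ as an object of $\dux$, uses $\dux(M,C)\otimes_R R_{s_0}\cong \dux(M,C\otimes_R R_{s_0})$ together with Theorem \ref{tcond}(I.1) and Lemma \ref{lbsn}(3) to conclude $C\otimes_R R_{s_0}=0$, and then reads off the vanishing of the stalks $C_x$ for $x$ lying over $s_0$. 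You instead base-change the scheme to $X\times_S\spe R_{s_0}$, which obliges you to also establish that the pulled-back perfect complexes compactly generate $D_{qcoh}$ of the localized scheme --- a true statement, and your colimit-over-principal-opens sketch (or, alternatively, an adjunction argument pushing forward to $X$) does give it, but it is extra overhead that the paper's formulation avoids entirely. Your forward inclusion via $R\mathcal{H}om(M,N)\simeq M^\vee\otimes^L N$ is fine, though properness of $p$ is not actually needed there (flat base change for $Rp_*$ at primes outside $T$ suffices); the paper obtains both inclusions at once from a single chain of equivalences. Both arguments are correct; the paper's is shorter, yours is more explicit about the geometry.
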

\begin{proof}
1. 
Theorem \ref{tcond}(I.3) easily implies that both $\bu^\z$ and all $\bu^\z\cap \au^s$ satisfy the assumptions on $\au$ in Theorem \ref{tamain}(II). Now, the latter theorem yields the result immediately.

2. By the definition of support, an object $C$ of $\dux$ belongs to $\tu_{\bu^\z}$  if and only if for any object $M$ of $\tux$ and any scheme point $s_0\in S\setminus T$ we have $\dux(M,C)\otimes_R R_{s_0}=\ns$; here $R_{s_0}$ is the localization of $R$ at $s_0$. Next, 
 for the dual object $M^{\bigvee}$ we have  $\dux(M,C)\cong H^0(X,M^{\bigvee}\otimes C)$; see \cite[Tag 08JJ]{stacksd}. 
Since the ring $R_{s_0}$ is a flat $R$-module, applying the associativity of $-\otimes -$ we deduce  $$\dux(M,C)\otimes_R R_{s_0}\cong \dux(M,C\otimes_R R_{s_0}).$$ 
 Combining Theorem \ref{tcond}(1) with Lemma \ref{lbsn}(3) we obtain that $C$ belongs to $\tu_{\bu^\z}$ if and only if $C\otimes_R R_{s_0}=0$ for all ${s_0}\in S\setminus T$.
 
 Moreover, the flatness of $R_{s_0}$ implies that for any  $n\in \z$ and $U\subset X$ we have $H^n(C\otimes_R R_{s_0})(U)\cong H^n(C)(U)\otimes_R R_{s_0}$. Since a complex of sheaves is acyclic if and only if all the sections of its cohomology sheaves are trivial, we conclude that $C\otimes_R R_{s_0}=0$  if and only if the sections of  all the  sheaves   $H^n(C)$  are supported on $T$.
\end{proof}

\begin{rema}\label{rngeom}
1. Clearly, all the categories  $\tu_{\au^s}\cap \tu_{\bu^\z}$ as in 
  Corollary \ref{csub}(1) depend on the category $\bu\cap R-\mmodd$ only. Now, all intersections of this sort 
 consist of finitely generated $R$-modules supported at some $T$ as in Corollary \ref{csub}(2); see Theorem A of \cite{taka} (along with Definition 2.3(1) of ibid.).


2. The 
 functoriality of the decomposition triangles (\ref{edec}) provided by Proposition \ref{pdec}(1) implies that $r\id_L=0=r\id_R$ whenever $r\in R$ and $r\id_M=0$. 
 Possibly, this observation can be used to obtain some  result related to Corollary \ref{csub}(2). 
\end{rema}

\subsection{
The proof Theorem \ref{tnee2}(II.1) and 
  some duality arguments}\label{sunb}


\begin{proof}
  Denote the projection $X\to S=\spe R$ by $p$.

 Recall that an object $N$ of $\dux$ belongs to $\tup$ if and only if all its 
  cohomology sheaves $H^i(N)$ are coherent. Moreover, $N$ belongs
 to $\dmu^-$ (resp. $\dmu^+$) whenever 
 we also have $H^i(N)=0$ for $i\gg 0$ (resp. $i\ll 0$).
Firstly we discuss the following easy part of Theorem \ref{tnee2}(II.1): for any $M\in \obj \tux$ and $N\in \obj \tup$
 we have $\dux(M,N)\in R-\mmodd$,   and that  $\dux(M[i],N)=\ns$ whenever $N\in \obj \dmu^-$ (resp.  $N\in \obj \dmu^+$) and $i$ is small (resp. large) enough.
 Recall that   $M$ is dualizable, its dual is perfect as well, and perfect complexes have finite Tor-amplitude.
 Hence it suffices to note that the functor $Rp_*:\dux\to D(R)$ sends $\tup$ into $D(R-\mmodd)$ and has finite cohomological amplitude. 
 
 Now we verify the converse implications. We will ignore the case of $\dmu^-$ for the reasons described in the proof of Theorem \ref{tcond}(I.3); yet note that the corresponding version of our argument works 
  without any difficulty.

 We argue similarly to the proof of \cite[Theorem A.1]{bvdb}; recall  
 that $X$ is closed subscheme of the projectivization $Y$ of a vector bundle over $S$. 

 Let us reduce the latter statement to the case $X=Y$. For any $M\in D_{perf}(Y)$ we have $\dux(Li^*M,N)\cong D(\operatorname{Qcoh}(Y)) (M, i_*N)$, where $i$ is the embedding $X\to Y$.
Since $Li^*M\in\tux$, the functor represented by the object $i_*N$ fulfils the corresponding 
  assumptions, and it remains to note that 
   $N$ belongs to $\tup$ (resp. $\dmu^+$) if and only if $i_*N$ belongs to $D_{coh}(\operatorname{Qcoh}(Y))$ (resp. to $D^+_{coh}(\operatorname{Qcoh}(Y))$); see  \cite[Tags 01QY, 087T]{stacksd} (along with Remark \ref{rstacks}). 
  
Now we assume $X=Y$, and $X$ is of dimension $d\ge 0$ over $S$.
 We apply Theorem 6.7 of \cite{schnurdb}. It gives fully faithful functors $\Phi_j:D(R)\to \dux;\ F\mapsto p^*F(j)$, for $j\in \z$; here we identify $D(\operatorname{Qcoh}(S))$ with $D(R)$. Moreover, it gives a "multiple semi-orthogonal decomposition" of $\dux$ into the essential images $\imm \Phi_j$ for $0\le j\le d$; see Definition \ref{dmult}(2) below (or Definition 5.3 of ibid.). 

Let us prove by 
 induction in $m,\ -1\le m\le d$, that $N$ belongs to $ \tup$ (resp. to $\dmu^+$)  if we assume in addition that $N$ belongs to the extension-closure of $\cup_{0\le j\le m}\imm \obj\Phi_j$; we will write 
  $\dux{}_{\le m}$ for the corresponding full triangulated subcategory of $\dux$ (see Lemma \ref{lbsn}(1) or Proposition \ref{pmult}(1) below). 
  This statement is vacuous if $m=-1$.
 
 Suppose that the inductive assertion is fulfilled for $m=m_0-1$ (where $0\le m_0\le d$) and $N\in \obj \dux{}_{\le m_0}$.
  Now the subcategories $\imm \Phi_{m_0}$ and  $\dux{}_{\le m_0-1}$ give a semi-orthogonal decomposition of  $ \dux{}_{\le m_0}$; see 
    Proposition \ref{pmult}(1) below or Definition 2.2 of \cite{kuzbch}. 
   Hence there exists a distinguished triangle 
   \begin{equation}\label{edn} N'\to N\to N''\to N'[1]\end{equation}
    with $N'\in \imm  \obj \Phi_{m_0}$ and $N''\in \obj  \dux{}_{\le m_0-1}$, and $N'\cong  \Phi_{m_0}\circ \Phi_{m_0}^! (N)$; here  $\Phi_{m_0}^!$ is the right adjoint to the functor $\Phi_{m_0}:D(R)\to  \dux{}_{\le m_0}$. 
      Now, the cohomology of the complex $\Phi_{m_0}^! (N)$ is given by  $\dux(p^*R(m_0),N[i])$ for $i\in \z$ (here $R$ is the tensor unit object of $D(R)\cong D(\operatorname{Qcoh}(S))$). Hence $\Phi_{m_0}^! (N)$ belongs to  $ D_{coh}(\operatorname{Qcoh}(S))\subset  D(R-\modd)$ (resp. to 
     $D^+_{coh}(\operatorname{Qcoh}(S))$); thus $\Phi_{m_0}\circ \Phi_{m_0}^! (N)$ belongs to $ \tup$ (resp. to 
       $\dmu^+$). Moreover, 
 applying (\ref{edn}) to functors corepresented by objects of $\tux$ we obtain that $\dux(M,N'')$ belongs to $R-\mmodd$ for any $M\in \obj \tux$ (and 
  we also have  $\dux(M[-i],N'')=\ns$ for $i\ll 0$ and the
  $\dmu^+$-version of the argument). Applying the inductive assumption we deduce that $N''$ is an object of $ \tup$ (resp.   $\dmu^+$) as well; hence the same is valid for $N$ itself. 
 
 Lastly, the category $\dux{}_{\le d}$ 
  equals $\dux$; see Proposition \ref{pmult}(1) below  or combine Definition 5.3 of \cite{schnurdb} with Lemma \ref{lbsn}(1).
    \end{proof}

 Now let us 
 pass to Grothendieck duality arguments; see Definition \ref{dadm}(\ref{icoh}) for the notation.

\begin{pr}\label{pdual}

Assume that  $X$  {\it admits a dualizing complex} in the sense of \cite[
Tag 0A87]{stacksd}. 

1.  Then an exact Grothendieck duality functor $D_X: \tup\to \tup{}\opp$ is defined 
 (uniquely up to an equivalence).
 The functor  $D_X\opp\circ D_X$ is isomorphic to the identity; respectively, $D_X$ is an equivalence (and an {\it involution}).

2. $D_X$ switches $\dmu^-$ and $\dmu^+$ and fixes $\tub$. 

3. If  $Y'$ is a scheme of 
  finite type 
  over a {\it Gorenstein} scheme  $Y$ 
  (see 
  \cite[Tag 0AWW]{stacksd}) of finite Krull dimension then $Y'$   admits a dualizing complex.

Moreover, if  $X$ is  Gorenstein of finite Krull dimension 
 then  $D_X$ also restricts to an equivalence $\tux\to \tux\opp$. In particular, this is the case if $X$ is regular of finite Krull dimension.

4. If $D_0=(\lo_0,\ro_0)$ is a semi-orthogonal decomposition of a triangulated subcategory $\tu_0$ of $\tup$ then the couple $D_X(D_0)=(D_X(\ro_0),D_X(\lo_0))$  is  a semi-orthogonal decomposition of the subcategory $D_X\opp(\tu_0\opp)\subset \tup$.
\end{pr}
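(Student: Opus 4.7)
The plan is to treat parts 1--3 as bookkeeping in standard Grothendieck duality, and then reduce part 4 to the formal observation that an exact contravariant equivalence sends semi-orthogonal decompositions to semi-orthogonal decompositions with $\lo$ and $\ro$ swapped. First I would fix a dualizing complex $\omega_X^\bullet$ on $X$ and define $D_X(N)=R\mathcal{H}om_{\mathcal{O}_X}(N,\omega_X^\bullet)$. For part 1, the facts that $D_X$ preserves $\tup$ and that the natural biduality map gives $D_X\circ D_X\cong \id_{\tup}$ are the characterising properties of a dualizing complex on a noetherian scheme (see e.g.\ the relevant tags in \cite{stacksd} around Tag 0A7C); in particular $D_X$ becomes an exact equivalence $\tup\to\tup\opp$.

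For part 2, I would use that $\omega_X^\bullet$ has finite injective dimension, which forces $D_X$ to have finite cohomological amplitude; combined with the involution property this forces $D_X$ to interchange $\dmu^-$ and $\dmu^+$ and hence to preserve their intersection $\tub$. For part 3, the existence of a dualizing complex on a scheme of finite type over a Gorenstein scheme is standard: the structure sheaf itself is dualizing on any Gorenstein scheme, and one transports it along the finite-type morphism via the relative duality formalism. When $X$ is Gorenstein the dualizing complex is locally a shift of an invertible sheaf, hence perfect; so $R\mathcal{H}om(M,\omega_X^\bullet)$ is perfect whenever $M$ is, giving the claimed restriction to $\tux$.

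Part 4 is the only portion with non-cited content. Given a decomposition $D_0=(\lo_0,\ro_0)$ of $\tu_0\subset\tup$, the subcategories $D_X(\lo_0)$ and $D_X(\ro_0)$ of $\tup$ are triangulated because $D_X$ is exact and an equivalence. The orthogonality $\obj D_X(\ro_0)\perp \obj D_X(\lo_0)$ follows from the isomorphism
\[
\tup(D_X(R'),D_X(L'))\cong \tup(L',R')\opp=\ns
\]
for $L'\in\obj\lo_0$, $R'\in\obj\ro_0$, which comes from the contravariant-equivalence property of $D_X$. Finally, for any $M\in\obj\tu_0$ I would apply $D_X$ to its $D_0$-decomposition triangle $L\to M\to R\to L[1]$ to obtain a distinguished triangle $D_X(R)\to D_X(M)\to D_X(L)\to D_X(R)[1]$ with $D_X(R)\in D_X(\ro_0)$ and $D_X(L)\in D_X(\lo_0)$; this is precisely a decomposition triangle for $D_X(M)$ with respect to $(D_X(\ro_0),D_X(\lo_0))$.

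I do not expect any serious obstacle; the only care needed is the bookkeeping around opposite categories, so that $D_X\opp(\tu_0\opp)$ is interpreted as the subcategory of $\tup$ whose objects are (isomorphic to) the $D_X$-images of objects of $\tu_0$. Otherwise all four parts are either direct invocations of Grothendieck duality or short categorical verifications.
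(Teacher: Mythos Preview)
Your proposal is correct and follows essentially the same route as the paper: parts 1--3 are dispatched by invoking the standard Grothendieck duality facts from \cite{stacksd}, and part 4 is the formal observation that a contravariant exact equivalence sends a semi-orthogonal decomposition to one with the components swapped. The paper's own proof is terser (it just points to \S48.19 and Lemmata 48.24.3, 48.24.4(2) of \cite{stacksd} for 1--3 and says part 4 is ``an easy consequence of our definitions; recall that $D_X$ is fully faithful''), but your more explicit unpacking of the orthogonality and the decomposition-triangle check is exactly what that sentence abbreviates.
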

\begin{proof}
All statements in assertions 1--3 
  easily follow from the properties of Grothendieck  duality listed in 
   \cite[Tags 0AU3, 0DWG, 0BFQ part 2]{stacksd} 
along with \cite{hares}; see 
%
the Sufficient condition 2 in 
\S V.10 
  of ibid.

Assertion 4 is an easy consequence of our definitions; recall that $D_X$ is fully faithful and  essentially surjective. 
\end{proof}



\begin{coro}\label{cdual}
 Assume that $X$   admits a dualizing complex and $E$ 
  is a semi-orthogonal decomposition  
 of $\tub$.

I. Assume that $X$ is regular of finite Krull dimension. 

Then 
$\tux=\tub$, and 
 there exist a unique semi-orthogonal decomposition 
  $\perpp E$ of $\tux$ such that $E=(\perpp E)^{\perp}_{\tux}$. 
   Moreover, $\perpp E=D_X(D_X(E)^{\perp}_{\tux})$. 


II. Suppose 
 that $X$ is either  regular of finite Krull dimension or  satisfies Assumption \ref{assa}.
 
1. Then 
 %
  $E$ extends (see  Definition \ref{dsort}(3))  to 
 a decomposition of $\dmu^+$ of the following form: 
    $E^+=D_X( (D_X(E))^{\coprod}_{\dmu^-})$. 

2. Assume in addition that $X$ is projective over $S$. 
Then  the 
decomposition 
  $E^{\coprod}_{\tup}$  of $\tup$ (see  Theorem \ref{tone}(II.1)) 
  equals  $E^u=D_X(D_X(E)^{\coprod}_{\tup})$.  

Consequently, the aforementioned $E^+$ 
  equals $E^{\coprod}_{\dmu^+}$. 

III. Assume 
 that $X$ is a Gorenstein scheme (cf. Proposition \ref{pdual}(3)) of finite Krull dimension and $D=(\lo,\ro)$ is a semi-orthogonal decomposition of $\tux$. 

Then ${}^\perp_{\tub}D=({}^\perp_{\tub}\lo, {}^\perp_{\tub}\ro)$ is a semi-orthogonal decomposition  of $\tub$ and 
  ${}^\perp_{\dmu^+}D$ is a semi-orthogonal decomposition  of $\dmu^+$. Moreover, if $X$ is projective over $S$ 
    then  
   ${}^\perp_{\tup}D$  is a semi-orthogonal decomposition  of $\tup$.
\end{coro}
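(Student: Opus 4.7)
The overall plan is to treat the Grothendieck duality functor $D_X$ of Proposition \ref{pdual} as a contravariant involution that interchanges left and right orthogonals, fixes $\tub$, swaps $\dmu^-$ with $\dmu^+$, and, when $X$ is Gorenstein, restricts to an auto-equivalence of $\tux$; then transport the existence statements of Theorem \ref{tone} through $D_X$ to their mirror versions. The key identity I would use throughout is that for any triangulated subcategory $\tu$ of $\tup$ preserved by $D_X$ and any $\cp \subset \obj \tu$, contravariance of $D_X$ together with $D_X \circ D_X \cong \id$ give $D_X(\cp^{\perp}_{\tu}) = {}^{\perp_{\tu}} D_X(\cp)$; combined with Proposition \ref{pdual}(4), which promotes $D_X$-images of decompositions to decompositions (with components swapped), this converts right-orthogonal constructions into left-orthogonal ones.

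For Part I, regularity of $X$ yields $\tub = \tux$, and since regular implies Gorenstein, Proposition \ref{pdual}(3) makes $D_X$ an auto-equivalence of $\tux$. Starting from the decomposition $E$ of $\tub = \tux$, I would form $D_X(E)$ as a decomposition of $\tux$ via Proposition \ref{pdual}(4), apply Theorem \ref{tone}(I.1) to produce $D_X(E)^{\perp}_{\tux}$, and then apply $D_X$ once more to obtain $\perpp E := D_X(D_X(E)^{\perp}_{\tux})$. The identity $(\perpp E)^{\perp}_{\tux} = E$ drops out from the orthogonality swap identity together with $D_X \circ D_X \cong \id$, and uniqueness of $\perpp E$ with this property is the injectivity assertion in Theorem \ref{tone}(I.2).

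For Part II.1, since $D_X$ fixes $\tub$ the couple $D_X(E)$ is a decomposition of $\tub$; under the present hypotheses Theorem \ref{tone}(II.1) extends it to the decomposition $D_X(E)^{\coprod}_{\dmu^-}$ of $\dmu^-$, and applying $D_X$ (which swaps $\dmu^-$ with $\dmu^+$) produces $E^+ = D_X(D_X(E)^{\coprod}_{\dmu^-})$ on $\dmu^+$, whose restriction to $\tub$ reduces via $D_X \circ D_X \cong \id$ to $E$ itself. For Part II.2, under projectivity Theorem \ref{tone}(II.1) identifies $E^{\coprod}_{\tup}$ with $(\perpp E)^{\perp}_{\tup}$, and I would prove $E^u = (\perpp E)^{\perp}_{\tup}$ as well by routing through the analogous identification for $D_X(E)$ and applying the orthogonality swap identity together with the compatibility $\perpp D_X(E) = D_X(\perpp E)$ (with components swapped), which in the Gorenstein case follows directly from Proposition \ref{pdual}(4) and the injectivity in Theorem \ref{tone}(I.2); the claim $E^+ = E^{\coprod}_{\dmu^+}$ then follows by restricting $E^u$ from $\tup$ to $\dmu^+$ through $D_X$ and comparing with the construction of $E^+$.

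For Part III, the Gorenstein hypothesis ensures $D_X$ restricts to $\tux$, so $D_X(D)$ is a decomposition of $\tux$ by Proposition \ref{pdual}(4); applying Theorem \ref{tone}(I.1) produces the decompositions $D_X(D)^{\perp}_{\tub}$, $D_X(D)^{\perp}_{\dmu^-}$, and, under projectivity, $D_X(D)^{\perp}_{\tup}$, and dualizing back via $D_X$ (which fixes $\tub$ and $\tup$ and swaps $\dmu^- \leftrightarrow \dmu^+$) together with the orthogonality swap identity transforms these into $\perpp_{\tub} D$, $\perpp_{\dmu^+} D$, and $\perpp_{\tup} D$ respectively. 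I expect the main obstacle to be the identification $E^{\coprod}_{\tup} = E^u$ in Part II.2: it demands careful tracking of how $D_X$ interchanges left with right orthogonals, and, when $X$ is only known to satisfy Assumption \ref{assa} rather than to be Gorenstein, the compatibility $\perpp D_X(E) = D_X(\perpp E)$ has to be established at the level of $\tub$ via the uniqueness of $\perpp(-)$ rather than by directly applying $D_X$ to $\tux$.
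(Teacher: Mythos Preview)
Your approach in Parts I, II.1, and III coincides with the paper's: in each case one transports an existing decomposition through $D_X$, invokes Theorem~\ref{tone}, and transports back.

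For Part II.2 there is a genuine gap. The compatibility $\perpp D_X(E) = D_X(\perpp E)$ that you propose to use requires $D_X$ to preserve $\tux$, since both sides must be decompositions of $\tux$. This holds when $X$ is Gorenstein (Proposition~\ref{pdual}(3)), hence in the regular case, but under Assumption~\ref{assa} alone $D_X(\tux)$ is only a subcategory of $\tub$, and the identity cannot even be formulated. Your suggested workaround---establishing the compatibility ``at the level of $\tub$ via the uniqueness of $\perpp(-)$''---does not resolve this: the operation $\perpp(-)$ always outputs a decomposition of $\tux$, so there is nothing sensible to compare in $\tub$.

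The paper avoids this compatibility entirely. Writing $D = (\lo,\ro)$ for $\perpp D_X(E)$, one computes directly that the components of $E^u = D_X(D^{\perp}_{\tup})$ are ${}^{\perp_{\tup}}D_X(\ro)$ and ${}^{\perp_{\tup}}D_X(\lo)$; here $D_X(\lo)$ and $D_X(\ro)$ lie merely in $\tub$, which is all that is needed. The key observation is structural: left orthogonals are closed under $\tup$-coproducts, since representable functors send coproducts to products. Because $E^u$ restricts to $E$ on $\tub$, one has $\lo_E \subset \lo^u$ and $\ro_E \subset \ro^u$, and closure under coproducts then yields $\lo_E{}^{\coprod}_{\tup} \subset \lo^u$ and $\ro_E{}^{\coprod}_{\tup} \subset \ro^u$. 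Now $E^u$ and $E^{\coprod}_{\tup}$ are both decompositions of $\tup$ with componentwise inclusions running in both directions, and Proposition~\ref{port}(II.1) forces them to coincide. No relation between $\perpp E$ and $\perpp D_X(E)$ is ever invoked.
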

\begin{proof}
I. 
It is well known that $\tux=\tub$ in this case. 
Hence $D_X(D_X(E)^{\perp}_{\tux})$ is a decomposition of  $\tux$;  
 see Theorem \ref{tone}(I.1) and Proposition \ref{pdual}(2,4). Since $D_X$ gives an equivalence $\tux\to \tux \opp $,  $(D_X(D_X(E)^{\perp}_{\tux}))^{\perp}_{\tux}=E$ indeed; see  Proposition \ref{port}(II.1) and the proof of Theorem \ref{tone}(II.1). 

II.1. 
 According to Proposition \ref{pdual}(1,2,4), $D_X(E)$ is a semi-orthogonal decomposition of $\tub$ as well.  
 By Theorem \ref{tone}(II.1),  
  $(D_X(E))^{\coprod}_{\dmu^-}$  
   is a decomposition of $\dmu^-$ 
   that restricts to $D_X(E)$ on $\tub$. 
  Applying $D_X$ once again we obtain that 
   $E^+$  is a decomposition of $\dmu^+$ that restricts to $E$ on $\tub$. 

2. 
We similarly obtain that 
  $E^u$ 
   is a decomposition of $\tup$ that extends both  $E^+$ and  $E=(\lo_E,\ro_E)$.
  
Next, 
for $D={}^{\perp}_{\tux} (D_X(E))$ we have $D^{\perp}_{\tub}=D_X(E)$ and $D^{\perp}_{\tup}=(D_X(E))^{\coprod}_{\tup}$; see Theorem \ref{tone}(II.1). Consequently, if $D=(\lo,\ro)$ then 
$$E^u= ({}^{\perp}_{\tup}D_X(\ro),{}^{\perp}_{\tup}D_X(\lo)).$$ 
Since the decomposition $E^u=(\lo^u,\ro^u)$ 
extends $E$ and representable functors convert coproducts into products, 
  it follows that $\lo_E{}^{\coprod}_{\tup}\subset \lo^u$ and $\ro_E{}^{\coprod}_{\tup}\subset \ro^u$.
  Hence comparing 
    $E^u$ with %
    the decomposition $E^{\coprod}_{\tup}$ of $\tup$ we obtain that $E^{\coprod}_{\tup}=E^u$ indeed; see  
     Theorem \ref{tone}(II.1) and Proposition \ref{port}(II.1). 
       
  Since $E^+=E^u\cap \dmu^+$, we conclude that $E^+$ 
  equals $E^{\coprod}_{\dmu^+}$.

III. Proposition \ref{pdual}(3) allows us to deduce all these statements from Theorem \ref{tone}(I.1) easily. 
\end{proof}

\begin{rema}\label{rdual}
1. Note that people are usually interested in schemes that are of finite type over regular ones (say, over spectra of fields or Dedekind domains) only. In this case $X$ admits a dualizing complex automatically; see Proposition \ref{pdual}(3).

2. Certainly, the case where the scheme $X$ is regular itself
is quite important. Note however that in this case 
 the category $\tux=\tub$ is {\it $R$-saturated}; see Definition 4.1.1 of \cite{bvttr} that originates from Definition 2.5 of \cite{bondkaprserre}. In this case the existence of the $\tub$-adjacent semi-orthogonal decomposition (see Theorem \ref{tone}(I.1)) 
  can also be proved similarly to the rather easy Proposition 2.6 of ibid. 

3. Clearly, 
 Grothendieck duality arguments  
  can also be 
    combined with  the aforementioned Neeman's 
 Corollary 0.5 of \cite{neesat} and Theorem 0.2 of \cite{neetc} to yield certain duals of these statements. 
\end{rema}


\section{Supplements and remarks}\label{scomm}

In \S\ref{snee} we prove an abstract Theorem \ref{tison} closely related to \cite{neesat}. We use it to prove Theorem \ref{tnee2}(I); 
 next we prove part 3 of that theorem. We also 
  prove that the aforementioned Theorem 0.2 of \cite{neetc} gives certain "almost decompositions" of $\dmu^-$, and compare two methods for constructing semi-orthogonal decompositions and adjoint functors.

In \S\ref{smult} we describe the "multiple" versions of definitions and main properties of semi-orthogonal decompositions. They imply the corresponding generalizations of our central results.

In \S\ref{swt} we discuss the relation of our arguments to (adjacent) weight structures and $t$-structures.


\subsection{
 More statements related to Neeman's results}\label{snee}

Now we prove a general theorem that yields Theorem \ref{tnee2}(I). 
We use some  definitions and notation from \cite{neesat}; our arguments are also related to ibid. 

\begin{theo}\label{tison}
Assume that $\tu$ is compactly generated by its subcategory $\tu_c$, and $F$ is an object of $\tu$. 
Denote the corresponding Yoneda functor $$\tu\to\adfu(\tu_c\opp,\ab)$$ 
  (cf. Theorem \ref{tnee2})  by $\yu$.

I.1. 
Assume that $F$ is {\it $\tu_c$-approximable} in the following sense:
there exists an (infinite)  chain of morphisms $E_0\to E_1\to \dots$, and $E_i\in \obj \tu_c$ are equipped with compatible morphisms 
$s_i:E_i\to  F$ that yield an isomorphism $\inli \yu(E_i)\cong \yu(F)$. Then for any $G\in \obj \tu$ any $\adfu(\tu_c\opp,\ab)$-morphism $\yu(F)\to \yu(G)$ is induced by some  morphism $F\to G$. 
 
 Consequently, the restriction of the functor $\yu$ to the subcategory   $\tu_a\subset \tu$ 
  of  $\tu_c$-approximable objects  is full, and if $\yu(G)\cong \yu(F)$ for $F\in \obj \tu_a$ then $G\cong F$.

2. Suppose that  there exists a chain of morphisms $F_0'\to F'_1\to \dots$, and $F'_i$ are equipped with compatible morphisms $t_i$ into $F$ that yield an isomorphism $\inli \yu(F'_i)\cong \yu(F)$. Moreover, assume that there exist morphisms $c_i:E_i'\to F_i'$ such that   $E'_i\in \obj \tu_c$ and for any $T\in \obj \tu_c$ there exists $N_T\ge 0$ such that $\{T\}\perp\{\co(c_i)\}$ for all $i>N_T$. Then one can choose a subsequence $E_i$ of $E'_i$ along with some connecting morphisms between them and compatible maps $E_i\to F$ such that $\inli \yu(E_i)\cong \yu(F)$ (as in assertion I.1).

II. Assume that $\tu$ is {\it generated} by a single $G\in  \obj \tu_c$, that is, $\{G[i],\ i\in\z\}^\perp_{\tu}=\ns$. 
Then $F$ is approximable 
whenever 
 one of the following assumptions is fulfilled.

1. There exist  $c_i:E_i'\to F_i'$ and $t_i$ as in assertion I.2 such that $\{G[i],\ -j\le i\le j\in\z\}\perp \{\co(c_j),\co(t_j)\}$ for all $j\ge 0$. 

2. There exist a $t$-structure on $\tu$ and $N\in \z$ such that $G\in \tu^{\le N}$ and  $ \{G\}\perp \tu^{\le -N}$ (see Definition 1.3.1 of \cite{bbd}), and for any $i\ge 0$  there exists a morphism 
    $c'_i: \emu_i\to F^{\le i}$ such that $\emu_i\in \obj\tu_c$ and $\co(c'_i)\in \tu^{\le -i}$. 

III. One can take  $\tu=\dux$, $\tu_c=\tux$, and $G$ to be any compact generator of $\dux$ (see Example 3.4 of \cite{neesat}) 
  in assertion II.
  Moreover, all the assumptions of assertion II.2 are fulfilled whenever $t$ is the canonical $t$-structure on $\dmu$ and $F\in \obj \tup$. 
\end{theo}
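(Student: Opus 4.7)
The plan is to exploit compact generation together with the Milnor-type homotopy colimit in order to identify $F$ (when approximable) with $\hinli E_i$, reducing the problem of lifting natural transformations to the Yoneda lemma applied at compact objects.

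For part I.1, I would form $\hinli E_i$ via the telescope triangle $\bigoplus E_i\to \bigoplus E_i\to \hinli E_i$ and use the compatible $s_i$ to obtain a canonical morphism $\sigma:\hinli E_i\to F$. For each $T\in\obj\tu_c$, compactness together with the telescope triangle yields $\tu(T,\hinli E_i)\cong \inli \tu(T,E_i)$, which by hypothesis equals $\tu(T,F)$; compact generation (cf.\ Lemma \ref{lbsn}(4)) then forces $\co(\sigma)=0$, so $\sigma$ is an isomorphism. Given a transformation $\psi:\yu(F)\to\yu(G)$, Yoneda at each compact $E_i$ provides $f_i\in\tu(E_i,G)$ corresponding to $\psi_{E_i}(s_i)$; these are compatible and assemble into a morphism $\hinli E_i\to G$, which composed with $\sigma^{-1}$ gives the required lift of $\psi$. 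The fullness claim is immediate, and the rigidity statement $\yu(G)\cong\yu(F)\Rightarrow G\cong F$ with $F\in\obj\tu_a$ follows by lifting the candidate isomorphism to $\tilde f:F\to G$ and checking that its cone has trivial image under $\yu$, hence vanishes by compact generation.

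For part I.2, I would construct the subsequence $n_0<n_1<\dots$ by induction: having fixed $n_i$, the obstruction to lifting $E'_{n_i}\to F'_{n_i}\to F'_{n_{i+1}}$ through $c_{n_{i+1}}$ is a morphism into $\co(c_{n_{i+1}})$, which by hypothesis (with $T=E'_{n_i}$) can be killed by taking $n_{i+1}$ large enough. The resulting $E_i=E'_{n_i}$, with connecting maps just constructed and with structure maps $E_i\to F'_{n_i}\to F$, form the desired chain, and the orthogonality conditions on the cones of the $c_i$'s and $t_i$'s ensure $\inli\yu(E_i)\cong\inli\yu(F'_{n_i})\cong\yu(F)$. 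Part II.1 is then an immediate application of I.2, since when $\tu$ is generated by a single compact $G$, every $T\in\obj\tu_c$ lies in the thick envelope of finitely many shifts $G[k]$. For part II.2 I would take $F'_j=F^{\le j}$ with $t_j$ the canonical truncation map (so $\co(t_j)\in\tu^{>j}$) and use the given $c'_j$; the bounds $G\in\tu^{\le N}$ and $\{G\}\perp\tu^{\le -N}$ cut off $G$ cohomologically in both directions, so using shifts and the $t$-structure orthogonalities one checks the hypotheses of II.1 once $j$ is large compared to $|i|$ and $N$.

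For part III, one takes $\tu=\dux$, $\tu_c=\tux$, and any compact generator $G$, whose existence is standard. For $F\in\obj\tup$, the truncations $F^{\le i}$ with respect to the canonical $t$-structure supply the $F'_i$, while the approximations $c'_i:\emu_i\to F^{\le i}$ with $\co(c'_i)\in\tu^{\le -i}$ come from the standard approximation of bounded-above coherent complexes by perfect ones in the proper setting. The main obstacle is the bookkeeping in part I.1: one must verify that the abstract isomorphism $\inli\yu(E_i)\cong\yu(F)$ is realized by the canonical $\sigma$ coming from the $s_i$ (not just some unrelated iso), so that compact generation really detects it; here the explicit hypothesis that the iso is induced by the $s_i$, combined with the pointwise description $\yu(\hinli E_i)\cong\inli \yu(E_i)$ on compacts, is what makes the argument go through without circularity.
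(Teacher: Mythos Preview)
Your proposal is correct and follows essentially the same route as the paper: both identify $F$ with the homotopy colimit $\hinli E_i$ via compact generation, use the surjectivity of $\tu(\hinli E_i,G)\to\prli\tu(E_i,G)$ to lift transformations, build the subsequence in I.2 by the same inductive obstruction-killing argument (the paper phrases it via \cite[Prop.~1.1.9]{bbd}), reduce II.1 to I.2 via the thick-subcategory description of $\tu_c$ in terms of a single generator, shift indices to reduce II.2 to II.1, and invoke Neeman's approximability results for III. The only cosmetic difference is that for the isomorphism claim in I.1 you argue directly that $\co(\tilde f)$ is killed by $\yu$, whereas the paper first observes $G\in\tu_a$ and then checks that an endomorphism with invertible $\yu$-image is invertible; these are equivalent.
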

\begin{proof}
I.1. The proof relies on rather easy and well-known properties of "triangulated" countable homotopy colimits. We will not define these colimits and only recall the facts we need.

So, there exists an object $F'=\hinli E_i$ along with compatible morphisms $s_i':E_i\to F'$. Next, for any object $T$ of $\tu$ 
 the corresponding homomorphism $\tu(F',T)\to \prli \tu(E_i,T)$ is surjective, and if $T_c\in \obj \tu_c$ then we obtain an isomorphism $\inli \tu(T_c,E_i)\to \tu(T_c,F')$; see Lemma 2.1.3(2--4) of \cite{bpws}. 
 
 Now we argue somewhat similarly to Lemma 5.8 of \cite{neesat}. Taking $T=F$  and the sequence $(s_i)\in \prli \tu(E_i,T)$ in the first property of $F'$ we obtain the existence of  $s:F'\to F$ such that $s\circ s_i'=s_i$ for each $i\ge 0$. Applying our assumptions on $F$ along with the second property of $F'$ we deduce that the homomorphism $\tu(T_c,s)$ is bijective for any $T_c\in \obj \tu_c$.  
 Consequently, $\tu_c\perp\{\co(s)\}$; hence Lemma \ref{lbsn}(3) implies 
   $\co(s)=0$. 
 
 Thus $F\cong F'$. Taking $G=T$ in the 
  first property of $F'$ we obtain the surjectivity statement in question. Clearly, it implies that  the restriction of $\yu$ to   $\tu_a$
 is full.
 
 Lastly, note that our assumptions on $F$ depend on $\yu(F)$ only. Thus if $\yu(G)\cong \yu(F)$ for $F\in \obj \tu_a$  then $G$ belongs to $\tu_a$ as well. Applying the fullness 
statement that we had just proved we obtain that it remains to prove the following: if $\yu(e)$ is invertible for a $\tu_a$-endomorphism $e$ then $e$ is invertible itself. Now, we obtain $\tu_c\perp\{\co(e)\}$ in this case; once again, it follows that $\co(e)=0$.

2. Let us prove that for any $j\ge 0$ there exists $l>j$ along with a morphism $E'_j\to E'_l$ such that the square
 $$\begin{CD}
 E'_j@>{}>>E'_l\\
@VV{c_j}V@VV{c_l}V \\
F'_j@>{}>>F'_l
\end{CD}$$
is commutative. 
As follows from the well-known and easy Proposition 1.1.9 of \cite{bbd}, for this purpose it suffices to choose $l$ such that $\{E'_j\}\perp\{\co(c_l)\}$. Thus we can take any $l>\max(N_{{E'_j}},j)$.


Next, we apply this statement repetitively starting from $n_0=0$ to obtain an infinite commutative diagram 
$$\begin{CD}
 E'_{n_0}@>{}>>E'_{n_1}@>{}>>E'_{n_2}@>{}>>E'_{n_3}@>{}>>\dots\\
@VV{c_{n_0}}V@VV{c_{n_1}}V@VV{c_{n_2}}V@VV{c_{n_2}}V@VV{}V \\
F'_{n_0}@>{}>>F'_{n_1}@>{}>>F'_{n_2}@>{}>>F'_{n_3}@>{}>>\dots
\end{CD}$$
Composing the compatible morphisms $F'_{n_i}\to F$ with $c_{n_i}$ we clearly obtain compatible morphisms $E'_{n_i}\to F$. We set $E_i=E'_{n_i}$.

Lastly, for any object $T$ of $\tu_c$ we 
 have $\inli_{i\ge 0} \tu(T,E_i) \cong \inli_{i>N_{T\bigoplus T[1]}} \tu(T,E_i)\cong 
 \inli_{i\ge 0} 
   \tu(T,F'_{n_i})\cong \tu(T,F)$. 

II.1. 
It clearly suffices to note that any object $T$ of $\tu_c$ is a direct summand of $T'$ such that $T'$ belongs to the extension-closure of $\{G[i]:\ -N\le i\le N\}$ for some $N>0$; see Example 0.13 and Remark 0.15 of \cite{neesat} or Proposition 4.4.1 of \cite{neebook}. 

2. Recall that  $\tu^{\le s}=\tu^{\le 0}[-s]$, $\co(F^{\le s}\to F)\in \tu^{\ge s+1}$ and $\tu^{\le s}\perp  \tu^{\ge s+1}$ for any $s\in \z$; see Definition 1.3.1 of \cite{bbd} (once again).  
 It easily follows that 
  $c_i=c'_{i+N+1}$ along with the canonical morphisms $t_i:F^{\le N+i+1}\to F$ 
  fulfil the assumptions of the previous assertion.

III. All the statements in question except the $\tu_c$-approximability of $F$ are 
 provided by Example 3.4 of \cite{neesat}.
Next, an object $L$ of $\dux$ belongs to $\tup$ if and only if all of its canonical $t$-truncations $L^{\le i}$ belong to $\dmu^-$. Thus is remains to note that $\dmu^-$ in this case equals the corresponding  subcategory $\tu^-_c$ of $\tu$;  see  Definition 0.16 of ibid. 
\end{proof}

\begin{rema}\label{rison}
1. Combining   parts I.1 and III  of our theorem we immediately obtain Theorem \ref{tnee2}(I). Moreover, this theorem also extends  
to the case where $X$ (is proper but) is not necessarily projective over $S$. 

2. 
The arguments used in \cite{neesat} to establish the fullness statement similar to Theorem \ref{tison}(I.1) require some additional assumptions (see Lemma 7.5 
  of ibid.). This restricts their "geometric" applications to the category $\dmu^-$ (instead of $\tup$ in our Theorem \ref{tison}(III)).  
Note however that the essential uniqueness for the objects that $\dux$-represent  
 those functors that correspond to $\tu_{\au^-}$ (see Theorem \ref{tcond}(I.3) or  \ref{tnee2}(II.1)) can be easily "extracted" from ibid.; cf. Lemma 5.8 of   \cite{neesat}.
 
 3. One may say that approximations used in ibid. come from certain "truncations from the left", whereas the approximations in Theorem \ref{tison}(III) come from some "two-sided truncations".
\end{rema}

To prove Theorem \ref{tnee2}(II.2)
we need a simple lemma.

\begin{lem}\label{lrlin}
 For any $R$-linear (additive) category $\bu$ the category $\adfur(\bu,R-\modd)$ is equivalent to $ \adfu(\bu,\ab)$.
 \end{lem}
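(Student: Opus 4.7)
The plan is to construct an explicit quasi-inverse pair of functors between $\adfur(\bu,R-\modd)$ and $\adfu(\bu,\ab)$. In one direction, send an $R$-linear functor $F:\bu\to R-\modd$ to the composition $U\circ F$, where $U:R-\modd\to \ab$ is the forgetful functor; this is clearly additive. In the opposite direction, given an additive functor $G:\bu\to \ab$, upgrade each abelian group $G(X)$ to an $R$-module by setting $r\cdot m := G(r\id_X)(m)$ for $r\in R$ and $m\in G(X)$, where $r\id_X$ makes sense because $\bu$ is $R$-linear. Natural transformations will be treated similarly, using that the $R$-linear condition becomes automatic as explained below.

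First I would verify that $r\mapsto G(r\id_X)$ indeed gives a ring homomorphism $R\to \ndo_{\ab}(G(X))$, which is immediate from the additivity of $G$ combined with the fact that $r\mapsto r\id_X$ is a ring map $R\to \bu(X,X)$ (this is part of the $R$-linear structure of $\bu$). Next, I would check that for any morphism $f:X\to Y$ in $\bu$, the map $G(f):G(X)\to G(Y)$ is $R$-linear: using $R$-bilinearity of composition in $\bu$, one has $f\circ (r\id_X) = r\cdot f = (r\id_Y)\circ f$, and applying $G$ and evaluating at $m\in G(X)$ yields $G(f)(r\cdot m) = r\cdot G(f)(m)$. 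This produces an $R$-linear functor, and the composition with $U$ manifestly returns $G$ on the nose.

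For the morphism sides, I would observe that any natural transformation $\eta:G\to G'$ of additive functors $\bu\to \ab$ automatically has each component $\eta_X$ that is $R$-linear once we install the $R$-module structures as above: by naturality applied to $r\id_X$,
\[
\eta_X(r\cdot m) = \eta_X(G(r\id_X)(m)) = G'(r\id_X)(\eta_X(m)) = r\cdot \eta_X(m).
\]
Hence $\eta$ lifts uniquely to a natural transformation of $R$-linear functors $\bu\to R-\modd$, and the two Hom-sets coincide. Combining this with the object-level verification above gives the desired equivalence.

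The whole argument is essentially formal bookkeeping; no step is genuinely difficult. If there is a subtle point, it is the conventions: one must use that $R$-linearity of $\bu$ means both that Hom-sets are $R$-modules and that composition is $R$-bilinear, so that the central map $R\to \bu(X,X)$ exists and its image is central; this is what makes the $R$-action on $G(X)$ well-defined and makes $G(f)$ turn out $R$-linear without any further hypothesis.
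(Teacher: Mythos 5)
Your proposal is correct and is essentially the paper's own argument spelled out in full: the paper likewise installs the $R$-module structure on $G(X)$ by letting $r$ act as $G(r\id_X)$, and the rest (linearity of $G(f)$ and of the components of natural transformations, via $R$-bilinearity of composition and naturality at $r\id_X$) is the routine bookkeeping you carry out. No discrepancies to report.
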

 \begin{proof} Any additive functor $F:\bu\to \ab$ naturally becomes an $R$-linear one if we define the multiplication by $r\in R$ on $F(B)$ for $B\in \obj \bu$ by means of composing with $F(r\id_{\bu})$. 
\end{proof}

\begin{proof}[The proof of Theorem \ref{tnee2}(II.2)] 

 First assume that $R$ is countable.

 Then for any $Y$ that is of finite type over $S$ (that equals the spectrum of a countable ring) the category 
  $D_{perf}(Y)$  is {\it countable}, that is, the set of isomorphism classes of objects and all its morphism sets are  countable. 
 Indeed, this statement is trivial if $Y$ is affine, and the general case 
  can be reduced to this one; see also \cite[Tag 0G0W]{stacksd}. 
 Thus we can apply Theorem 5.1 of \cite{neecb} to obtain that all homological functors $\tux\opp\to \ab$ are represented by objects of $\dux$. 
  Thus all homological functors $\tux\opp\to R-\modd$ are representable as well; see Lemma \ref{lrlin}.
  Since  the values of $H$  belong to $R-\mmodd$ and $X$ is projective over $S$, $H$ is represented by an object $N$ of $\dux$ according to Theorem \ref{tnee2}(I).

  Next assume that $R$ is self-injective. Similarly to the proof of \cite[Theorem A.1]{bvdb},  we apply a double duality argument. 
    The idea is to extend a homological functor $H:\tux\opp\to R-\mmodd$ to a "nice" functor $\dux\opp\to   R-\modd$.
   
  Take the functor $\hat{H}:\tux\to R-\mmodd,\ M\mapsto \homm_R(H(M),R)$. Since $R$  is an injective $R$-module, $\hat{H}$ is homological. Next, it extends to a homological functor $\hat{H}':\dux\to R-\modd$ that respects coproducts; see Proposition 2.3 of \cite{krause}. Now we take $H':\dux\opp\to R-\modd,\  M\mapsto \homm_R(\hat{H}'(M),R)$.
  This functor is clearly cohomological and respects products. Consequently, $H'$ is representable; 
  see the well-known Theorem 8.3.3 
   of \cite{neebook} along with Lemmata \ref{lbsn}(4) and \ref{lrlin}. 
  
  It remains to prove that $H'$ restricts to $H$ on $\tux$. We note that $R$ is a  {\it quasi-Frobenius ring}; see Theorem 15.1 of \cite{lamod}. Hence the ("double duality") statement is question is provided by Theorem 15.11 of ibid.
\end{proof}


\begin{rema}\label{rnee212}
1. Since $R$ is commutative, it is also a {\it Frobenius ring} whenever it is self-injective. Moreover, rings of this sort can be described more or less explicitly; see  Theorem 15.27  of \cite{lamod}.

Note however that the most important ("from the algebraic geometry point of view") 
Frobenius rings are fields; this is the only case mentioned in \cite{bvdb}.  

2.  Once again, Corollary 0.5 of \cite{neesat} 
 suggests that the (additional) assumptions on $R$ in Theorem \ref{tnee2}(II.2) are not necessary; it also probably suffices to assume that $X$ is proper over $S$. 
\end{rema}

Now we deduce one more consequence from Theorem 0.2 of \cite{neetc}.
We argue somewhat similarly to Proposition \ref{ptu}.

\begin{pr}\label{pnee3}
  Assume that $X$ satisfies Assumption \ref{assa} 
  and let $D=(\lo,\ro)$ be a semi-orthogonal decomposition of $\tub$. 
 
Then  for any $M\in \obj \dmu^-$ there exists a distinguished triangle $$ L\to M\to R\to L[1]$$ with $L\in {}^\perp_{\dmu^-}\lo$ and $R\in {}^\perp_{\dmu^-}\ro$.

\end{pr}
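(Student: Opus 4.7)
My approach is to emulate the proof of Proposition \ref{ptu}, with Theorem 0.2 of \cite{neetc} playing the role of its Yoneda-representability hypothesis. Under Assumption \ref{assa}, that theorem (compare the proof of Theorem \ref{tcond}(II.2)) identifies $\tux\opp$ via $(-)^{\circ}U_{\lambda}$ with the category of homological functors $\tub\to R-\mmodd$ satisfying the finite-generation condition; this is the analog, at the level of $\tub$ and $\tux$, of the essential-image-coincidence hypothesis in Proposition \ref{ptu}.

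Given $M\in\obj\dmu^-$, the first step of my plan is to produce a candidate triangle via Theorem \ref{tamain}(II.2). By Theorem \ref{tone}(II.1) the decomposition $D=(\lo,\ro)$ of $\tub$ corresponds, under Assumption \ref{assa}, to the unique decomposition $\perpp D=D_0=(\perpp_{\tux}\lo,\perpp_{\tux}\ro)$ of $\tux$. Applying Theorem \ref{tamain}(II.2) to $D_0$ with $\tu=\tux\subset\dmu=\dux$ and $\au=\au^-$ (so that the corresponding $\tu_{\au}$ equals $\dmu^-$ by Theorem \ref{tcond}(I)) yields a distinguished triangle $L\to M\to R\to L[1]$ in $\dmu^-$ with $L$ and $R$ lying in the right-orthogonals (in $\dmu^-$) of $\perpp_{\tux}\lo$ and $\perpp_{\tux}\ro$ respectively.

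The critical second step is to upgrade these right-orthogonality conditions to the left-orthogonality conditions of the statement. For this, given $Y\in\obj\lo$, I would use Theorem 0.2 of \cite{neetc} to represent the functor $\dux(-,Y)|_{\tub}$ by an object of $\tux\opp$, and then use the long exact sequence associated to the triangle, together with the right-orthogonality of $L$ to $\perpp_{\tux}\lo$ established in the first step, to conclude $\dmu^-(L,Y)=0$. The argument for $R$ and $\ro$ is parallel, using the analogous representation for $\dux(-,Y)|_{\tub}$ with $Y\in\obj\ro$ and the right-orthogonality of $R$ to $\perpp_{\tux}\ro$.

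The main obstacle will be this bridging step: right-orthogonality in $\dmu^-$ with respect to $\perpp_{\tux}\lo$ and left-orthogonality with respect to $\lo$ itself are genuinely different directions of Hom-vanishing, and bridging them requires the full strength of Theorem 0.2 of \cite{neetc} to transfer control of morphism groups between the two orientations. This is analogous to (though moves in a different direction from) the delicate Yoneda-lifting argument in the proof of Proposition \ref{ptu}, and explains why the statement of the present proposition is weaker than a genuine semi-orthogonal decomposition -- hence the characterization in \S\ref{snee}'s introduction as providing "almost decompositions" of $\dmu^-$.
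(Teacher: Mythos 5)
Your plan starts from the wrong triangle, and the ``bridging step'' that you flag as the main obstacle is not merely delicate but impossible. The decomposition $D_0^{\perp}_{\dmu^-}$ obtained by applying Theorem \ref{tamain}(II.2) to $D_0=\perpp D$ restricts to $D$ on $\tub$ (this is part of Theorem \ref{tone}(II.1), since $(\perpp D)^{\perp}_{\tub}=D$); in particular its first component $(\perpp_{\tux}\lo)^{\perp}_{\dmu^-}$ \emph{contains} $\lo$ rather than being left-orthogonal to it. So for a nonzero $M\in\obj\lo$ your first step returns the (canonically determined, by Proposition \ref{pdec}(1)) triangle $M\stackrel{\id}{\to}M\to 0$, whose left term satisfies $\dmu^-(M,M)\neq\ns$ and hence does not lie in $\perpp_{\dmu^-}\lo$; the triangle the proposition demands for such an $M$ is $0\to M\to M$. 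Thus right-orthogonality to $\perpp_{\tux}\lo$ and membership in $\perpp_{\dmu^-}\lo$ are not two orientations of one condition that a representability theorem could interchange for a fixed $L$ --- they single out genuinely different triangles. Relatedly, invoking Theorem 0.2 of \cite{neetc} to ``represent $\dux(-,Y)|_{\tub}$ by an object of $\tux\opp$'' misreads that theorem: it concerns covariant homological functors $\tub\to R-\mmodd$, i.e.\ those of the form $\dmu^-(P,-)|_{\tub}$, not the contravariant functor $\dux(-,Y)$ (which is in any case tautologically represented by $Y$ itself).

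The missing idea is to build $L$ directly as a representing object rather than to extract it from an adjacent decomposition. Writing $\pi_{\ro}:\tub\to\ro$ for the projection $N\mapsto R$ attached to the $D$-decomposition triangle of $N$, the functor $N\mapsto\dmu^-(M,\pi_{\ro}(N))$ is homological with values in $R-\mmodd$, so by Theorem 0.2 of \cite{neetc} (essential surjectivity of the Yoneda-type functor $\dmu^-{}\opp\to\adfur(\tub,R-\mmodd)$ onto homological functors) it is isomorphic to $\dmu^-(L,-)|_{\tub}$ for some $L\in\obj\dmu^-$; since $\pi_{\ro}$ kills $\lo$, this $L$ automatically lies in $\perpp_{\dmu^-}\lo$. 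Fullness of the same functor lifts the transformation $\dmu^-(M,-)\to\dmu^-(M,\pi_{\ro}(-))$ induced by the unit morphisms $N\to\pi_{\ro}(N)$ to a morphism $f:L\to M$, and for $N\in\obj\ro$ one has $N\cong\pi_{\ro}(N)$, so the comparison maps $\dmu^-(M,N)\to\dmu^-(L,N)$ and $\dmu^-(M[1],N)\to\dmu^-(L[1],N)$ are bijective and the long exact sequence yields $\co(f)\in\perpp_{\dmu^-}\ro$. This is a mutation-type construction: the couple $(\perpp_{\dmu^-}\lo,\perpp_{\dmu^-}\ro)$ need not be semi-orthogonal and $f$ need not be unique, which is the actual reason the statement is only an ``almost decomposition.''
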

\begin{proof}
We fix $M$. Let $H_L:\tub\to R-\modd$ be the functor $N\mapsto \dmu^-(M, R_D(N))$ (see Proposition \ref{pdec}(1));  
 it is obviously homological. 
Now recall that the Yoneda-type functor $\dmu^-{}\opp\to \adfur(\tub,R-\modd)$ is full, and its essential image 
 consists of all homological functors $\tub\to R-\mmodd$; see Theorem 0.2 of \cite{neetc}. Hence all values of $H_L$ belong to $R-\mmodd$, and applying loc. cit. once again we obtain that $H_L$ is isomorphic to  the functor $N\mapsto \dmu^-(L, N)$ for some $L\in \obj \dmu^-$. Since $R_D$ annihilates $\lo$, $L$ belongs to $  {}^\perp_{\dmu^-}\lo$.

Next, set $H_M:\tub\to R-\modd$ to be the functor $N\mapsto \dmu^-(M, N)$;
take $\Phi: H_L\to H_M$ to be the transformation induced by the morphisms $N\to  R_D(N)$ in (\ref{edec}). Then loc. cit. also implies that this transformation comes from a (possibly, non-unique) morphism $f: L\to M$.

 Lastly, if $N\in \ro$ then $N\cong  R_D(N)$; hence in the exact sequence $$\dmu^-(M[1],N)\to \dmu^-(L[1],N)\to \dmu^-(\co(f),N) \to \dmu^-(M,N)\to \dmu^-(L,N)$$ the first and the last maps are isomorphisms, and we obtain  $\co(f)\in {}^\perp_{\dmu^-}\ro$.
\end{proof}

\begin{rema}\label{rimpr}
1. Clearly, this argument can be axiomatized  similarly to Theorem \ref{tamain}(II). 
In particular, one can 
 combine it with  Theorem \ref{tnee2}(II.2) (see Remark \ref{rnee2}(3)) to obtain a weaker version of Theorem \ref{tone}(I.1).

Moreover, 
 note that the corresponding  Yoneda-type functor $\yu_{\tux\opp}: \tux\opp\to \adfur(\tub,R-\modd)$ is fully faithful; this statement (that is contained in Theorem 0.2 of \cite{neetc} as well) is obvious since $\tux\subset \tub$. Loc. cit. also says that the image of $\yu_{\tux\opp}$ consists of all {\it finite homological functors} (cf. 
   Theorem \ref{tcond}(II.2)). Consequently, the corresponding version of the argument in the proof of Proposition \ref{pnee3} yields an alternative proof of Theorem \ref{tone}(II.1) in the case where Assumption \ref{assa} is fulfilled, and  it does not require any mock projectives (cf. Theorem \ref{tcond}(II)). 
  
  However, it appears that it makes sense to construct adjacent decompositions using Theorem \ref{tamain}(II.2) (possibly combining it with Proposition \ref{ptu}) 
   since this method  requires less knowledge on the relation between $\tu$ and $\tu'$; cf. Theorem \ref{tnee2}(II.1). 
     We have to pay the price of specifying certain $\dmu$ 
  which can be "not that interesting" (cf. Theorem \ref{tcond}(II) and Remark \ref{rdec1}(1)). 
  
  However, some $\dmu$ as desired is well known to exist "in all reasonable cases". 
  Similarly, a functor $\tu_{\au}\to D$ as in Corollary \ref{cadj} "usually" can be extended to an exact functor $\dmu\to \dmu'$ that respects coproducts. 
  For this reason, the author suspects that Corollary \ref{cadj} is somewhat more useful than the corresponding Corollary 0.4 of \cite{neesat}; cf. Remark \ref{rcadj}.
 Note also that the proof of that corollary is much easier than that of loc. cit.
 
 2. Moreover, Theorem 0.3 of \cite{neesat} and other results of Neeman may help in constructing "interesting" couples $(\tu,\tu')$ that satisfy the assumptions of  Proposition \ref{ptu}.
  

%
3. Proposition 2.5.4(1) of \cite{bger} easily implies that for $R=\co(f)$ the functor $N\mapsto \tu'(M, R)$ is isomorphic to  $N\mapsto \tu'(M, R_D(N))$; cf. Remark \ref{radj}. Moreover, we obtain a complete identification of the transformations between the corresponding Yoneda-type functor from part 2 of that proposition.
\end{rema}

\subsection{On "multiple" semi-orthogonal decompositions}\label{smult}

Now we generalize Definition \ref{dsort}(1,3).

\begin{defi}\label{dmult}
Let $n\ge 1$ and assume that $\tu_i,\ 0\le i\le n,$ are 
 (strictly full) 
  triangulated subcategories of $\tu$. 

1. Then for any $j,\ -1\le j\le n$ we will write $\tu_{\le j}$ (resp. $\tu_{\ge n-j}$) for the smallest (strictly full) triangulated subcategory of $\tu$ that contains $\tu_i$ for all $i\le j$ (resp. $i\ge n-j$).\footnote{Respectively, $\tu_{\le -1}=\tu_{\ge n+1}=\ns$.}

2. We will say that the family $(\tu_i)$ gives  a 
 {\it (length $n$) semi-orthogonal decomposition} of $\tu$ (or just 
  a decomposition of $\tu$) if $\tu_j\perp \tu_i$ whenever $0\le i<j\le n$, and 
 $\tu_{\le n}=\tu$.
 
 3.  Let $\tu'$ be a triangulated subcategory of $\tu$. We will say that a (semi-orthogonal) decomposition  $(\tu_i)$ of $\tu$ {\it restricts} to $\tu'$ whenever the family 
$(\tu_i) \cap{\tu'}$ (see Definition \ref{dadm}(\ref{irest})) gives a decomposition of $\tu'$.
 \end{defi}

\begin{pr}\label{pmult}

1.  If $(\tu_i)$ is a  semi-orthogonal decomposition of $\tu$ then for any $j,\ 1\le j\le n$, the couple $(
\tu_j,
 \tu_{\le j-1})$ gives a decomposition of $\tu_{\le j}$ 
 in the sense of Definition \ref{dsort}(1),  $(\tu_{\ge j}, \tu_{j-1})$ is decomposition of $\tu_{\ge j-1}$, 
 and $(\tu_{\ge j},\tu_{\le j-1})$ is a semi-orthogonal  decomposition of $\tu$.

2. The correspondence  $(\tu_i)\mapsto (\tu_{\le i-1},\ 1\le i\le n)$  (resp.   $(\tu_i)\mapsto (\tu_{\ge n+1-i},\ 1\le i\le n)$) gives a  bijection between the class of all length $n$  semi-orthogonal decompositions of $\tu$ and the class of   
 ascending chains  $(\ro_i),\ 1\le i\le n,$ of  left  (resp. $(\lo_i),\ 1\le i\le n$, of right) admissible subcategories of $\tu$.

Moreover, the inverse map is given by sending  $(\ro_i)$ into 
 $({}^{\perp}_{\ro_{i+1}}\ro_{i})$  (resp.   $(\lo_i)$ into $(\lo_{n-i}{}^{\perp}_{\lo_{n+1-i}})$) for $0\le i\le n$;
 here we expand  the chains $(\lo_i)$ and $(\ro_i)$ by setting  $\lo_0=\ns=\ro_0$ and $\lo_{n+1}=\tu=\ro_{n+1}$. 
    Furthermore, the corresponding bijection between  (length $n$ 
   ascending) "right admissible chains" and "left admissible chains" in $\tu$ is given by sending $(\lo_i)$ into $(\lo_{n+1-i}{}^{\perp}_{\ro})$.
  
3. Let $\tu'$ be a triangulated subcategory of $\tu$. Then the following conditions for a decomposition  $(\tu_i)$ of $\tu$ are equivalent. 

(a). 
 $(\tu_i)$  restricts to $\tu'$.

(b).  The smallest  triangulated subcategory of  $\tu'$ that contains all $\tu_i\cap \tu'$ is $\tu'$ itself.


(c).   $(\tu_{\ge j+1}, \tu_j)$ restricts to a semi-orthogonal decomposition of $\tu_{\ge j}\cap \tu'$ whenever $0\le j\le n-1$.

(d). $(\tu_{\ge j+1},\tu_{\le j})$  restricts to a semi-orthogonal decomposition of $\tu'$ for  $0\le j\le n-1$.

\end{pr}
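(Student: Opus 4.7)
The plan is to bootstrap the theory of binary semi-orthogonal decompositions (Proposition \ref{pdec}) combined with Lemma \ref{lbsn}(1) to handle the multiple case, proceeding by induction on $n$ (or $j$) throughout.

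For part 1, the orthogonality $\tu_{\ge j} \perp \tu_{\le j-1}$ reduces to the pairwise orthogonality by two applications of Lemma \ref{lbsn}(1): for fixed $k \ge j$, the class of objects right-orthogonal to $\tu_k$ is triangulated, hence contains $\tu_{\le j-1}$; then the class left-orthogonal to $\tu_{\le j-1}$ is triangulated, hence contains $\tu_{\ge j}$. For the existence of decomposition triangles, I would iteratively peel off pieces: given $M \in \tu_{\le j}$, write $M$ as an extension built from the $\tu_i$ using Lemma \ref{lbsn}(1) and the generation condition $\tu = \tu_{\le n}$; then collect the top piece in $\tu_j$ and the rest in $\tu_{\le j-1}$. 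The other two displayed decompositions are obtained symmetrically. A useful consequence that emerges from this argument (and is needed for the later parts) is the identity $\tu_{\ge i} \cap \tu_{\le i} = \tu_i$: an object in the intersection decomposes in $(\tu_i, \tu_{\le i-1})$ but also sits inside $\tu_{\ge i}$, forcing the $\tu_{\le i-1}$-component to be self-orthogonal.

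For part 2, the forward map $(\tu_i) \mapsto (\tu_{\le i-1})$ is well-defined by part 1 together with Proposition \ref{pdec}(2): $\tu_{\le i-1}$ is the right component of a binary decomposition of $\tu$, hence left admissible. For the inverse, given an ascending chain $(\ro_i)$ of left admissible subcategories of $\tu$, I would first verify that $\ro_i$ is left admissible in $\ro_{i+1}$ by noting that the left adjoint $\tu \to \ro_i$ restricts; applying Proposition \ref{pdec}(2) inside $\ro_{i+1}$ makes $\tu_i = {}^{\perp}_{\ro_{i+1}} \ro_i$ the left component of a binary decomposition of $\ro_{i+1}$. The orthogonality $\tu_j \perp \tu_i$ for $i < j$ follows from $\tu_i \subset \ro_{i+1} \subset \ro_j$ and $\tu_j \perp \ro_j$ by construction. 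The generation $\tu_{\le n} = \tu$ is proved by induction on $i$: the decomposition $(\tu_i, \ro_i)$ of $\ro_{i+1}$ shows $\ro_{i+1} \subset \tu_{\le i}$, and $\ro_{n+1} = \tu$. Mutual inverseness is then a direct inspection. The bijection with right admissible chains follows by combining with Proposition \ref{pdec}(2) in the ``right'' direction, or more intrinsically from the binary decomposition $(\tu_{\ge i}, \tu_{\le i-1})$ whose components are a matched pair in the sense of Proposition \ref{pdec}(1).

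For part 3, (a) $\Leftrightarrow$ (b) is immediate since the orthogonality required for a decomposition $(\tu_i \cap \tu')$ is inherited automatically from $\tu$; only the generation matters. For (a) $\Rightarrow$ (c), (d): if $(\tu_i \cap \tu')$ decomposes $\tu'$, then part 1 applied inside $\tu'$ yields binary decompositions whose components I claim coincide with $\tu_{\ge j+1} \cap \tu'$, $\tu_j \cap \tu'$, and $\tu_{\le j} \cap \tu'$ respectively; the nontrivial inclusion (e.g. $\tu_{\ge j+1} \cap \tu'$ is generated by $\tu_i \cap \tu'$ for $i \ge j+1$) follows from the uniqueness of decomposition triangles in $\tu$ together with the identity $\tu_{\ge i} \cap \tu_{\le i} = \tu_i$ observed above. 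For the converses (c) $\Rightarrow$ (a) and (d) $\Rightarrow$ (a), I would nest the binary restrictions inductively: starting from $M \in \tu'$, applying (c) or (d) for successive values of $j$ exhibits $M$ as an iterated extension of objects lying in $\tu_i \cap \tu'$, verifying (b). The main obstacle I expect is the careful bookkeeping needed to identify $\tu_{\ge j+1} \cap \tu'$ with the subcategory of $\tu'$ generated by $\tu_i \cap \tu'$ for $i \ge j+1$; this hinges on using uniqueness of decomposition triangles cleanly so that pieces living in $\tu'$ remain in $\tu'$ after being passed through the (unique) binary decomposition functors provided by Proposition \ref{pdec}(1).
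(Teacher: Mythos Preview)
Your proposal is correct and follows essentially the same approach as the paper: bootstrapping from the binary case via Proposition \ref{pdec} and Lemma \ref{lbsn}(1), with induction on $j$ (or $n$). Two small remarks: in part 1 the orthogonality $\tu_{\ge j}\perp\tu_{\le j-1}$ uses that orthogonal complements are triangulated (a basic fact) rather than Lemma \ref{lbsn}(1) itself, which concerns extension classes; and in part 3 you have correctly identified and handled the one subtle point (identifying $\tu_{\ge j+1}\cap\tu'$ with the subcategory generated by the $\tu_i\cap\tu'$ via uniqueness of decomposition triangles) that the paper's very terse proof glosses over.
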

\begin{proof}

1. Clearly, 
$\tu_j\perp \tu_{\le j-1}$. The existence of decompositions of the type (\ref{edec}) for all objects of $\tu_{\le j}$ easily follows from  Lemma \ref{lbsn}(1); cf. the proof of Theorem \ref{tamain}(I.1).

The statements that  $(\tu_{\ge j}, \tu_{j-1})$ is decomposition of $\tu_{\ge j-1}$ and $(\tu_{\ge j},\tu_{\le j-1})$ is a decomposition of $\tu$ are proved similarly.

2. Clearly, it suffices to study the first correspondence since the second one is essentially its categorical dual.

Applying assertion 1 along with Proposition \ref{pdec} we obtain that $\tu_{\le i-1}$ is left admissible in $\tu_{\le i}$ for $1\le i\le n$. Since  $\tu_{\le n}=\tu$, we obtain that all $\tu_{\le i}$  are left admissible in $\tu$.

Conversely, if one starts from
 for  $(\ro_i)$ as above then 
  she clearly obtains $\tu_j\perp \tu_i$ whenever $0\le i<j\le n$. 
Applying easy induction along with Proposition \ref{pdec}(2) we also obtain that  $\tu_{\le n}=\tu$ indeed.

Now let us calculate the compositions of our maps. 
If we start from $(\tu_i)$ then Proposition \ref{pdec}(1) easily implies that the corresponding orthogonals 
 equal $\tu_i$.
If we start from $(\ro_i)$  then the existence of the triangles of the type (\ref{edec})  provided by Proposition \ref{pdec}(2) yields that the corresponding smallest triangulated subcategories of $\tu$ are $\ro_i$ indeed.

Lastly, if we start from a right admissible chain $\tu_{\le i}$ that corresponds to $\tu_i$ then the corresponding left admissible chain consists of $\tu_{\ge n+1-i}=\tu_{\le n-i}{}^{\perp}_{\tu}$; here we apply assertion 1. 

3. The equivalence of conditions (a) and (b) is obvious. Next, applying assertion 1 (along with Proposition \ref{pdec}(1)) we easily obtain that (b) is equivalent to (c) and (d).
\end{proof}

Now we are able to generalize Theorem \ref{tamain}.

\begin{theo}\label{tamult}

Assume 
 that $\dmu=\tu^{\coprod}$, 
    where $\tu\subset \dmu$ is a triangulated subcategory  whose objects are $\dmu$-compact,   and $(\tu_i),\ 0\le i\le n,$ is a semi-orthogonal decomposition of $\tu$. 

I.1. 
Then the family $(\tu_i^{\coprod})$ 
 (see  Definition \ref{dadm}(\ref{icopr}))  is a semi-orthogonal decomposition of $\dmu$. 

2. Assume in addition that $\tu$ is essentially small (consequently, $\dmu$ is compactly generated by it). Then the family $(\hat \tu_i)=(\tu_{\ge n-i+1}{}^{\perp}_{\tu_{\ge n-i}^{\coprod}})$
is  a semi-orthogonal decomposition  of $\dmu$ as well; here we take $0\le i\le n$  (note that 
$\tu_{\ge n+1}=\ns$).

Moreover, the corresponding 
 ascending chain of  left (resp. right) admissible subcategories of $\dmu$ (see Proposition \ref{pmult}(2))  equals $(\tu_{\le n-i}{}^{\perp}_{\dmu})$ (resp. $(\tu_{\le i-1}^{\coprod})$); here $1\le i\le n$.



3. Assume that $\tu_0$ is a subcategory of $\dmu$ such that 
 the family $(\hat\tu_i{})$  restricts to a semi-orthogonal decomposition 
 $D_0$ on it (see Definition \ref{dmult}(3)). 

Then 
the family $(\hat\tu_i{})$  also restricts to the category $\tu_{0}^{\coprod}$ as well, and this restriction equals  $D_0^{\coprod}$ 
  (see  Definition \ref{dadm}(\ref{icopr}) once again).  


II. Assume that $R$, $\au$, and $\tu_{\au}$ are as in Theorem \ref{tamain}(II).
 
Then the family     $(\hat\tu_i)$ 
(see assertion I.2)   
 restricts to a decomposition of the category $\tu_{\au}$ (which is triangulated according to  Theorem \ref{tamain}(II.1)). 


\end{theo}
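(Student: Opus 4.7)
The plan is to bootstrap from the binary Theorem \ref{tamain} via the binary/multiple dictionary of Proposition \ref{pmult}.

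For Part I.1 I would proceed by induction on $n$. Proposition \ref{pmult}(1) gives the binary decomposition $(\tu_n,\tu_{\le n-1})$ of $\tu$; Theorem \ref{tamain}(I.1) applied to this yields the binary decomposition $(\tu_n^{\coprod},\tu_{\le n-1}^{\coprod})$ of $\dmu=\tu^{\coprod}$. Since the objects of $\tu_{\le n-1}$ are $\dmu$-compact and $\tu_{\le n-1}^{\coprod}\subset\dmu$ is closed under $\dmu$-coproducts, the inductive hypothesis applied to the length $n-1$ decomposition $(\tu_i)_{i\le n-1}$ of $\tu_{\le n-1}$ (with ambient category $\tu_{\le n-1}^{\coprod}$) completes the induction.

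The main content is in Part I.2. I would apply Theorem \ref{tamain}(I.2) to each binary decomposition $D_j=(\tu_{\ge j+1},\tu_{\le j})$ of $\tu$ (itself coming from Proposition \ref{pmult}(1)) for $0\le j\le n-1$: this yields the decomposition $D_j^{\perp}_{\dmu}=(\tu_{\le j}^{\coprod},\tu_{\le j}{}^{\perp}_{\dmu})$ of $\dmu$, where the identification of the left part uses the formula $\lo^{\perp}_{\dmu}=\ro^{\coprod}$ from Theorem \ref{tamain}(I.2) together with Lemma \ref{lbsn}(3). This exhibits $(\tu_{\le i-1}^{\coprod})_{1\le i\le n}$ as an ascending chain of right admissible subcategories of $\dmu$ and $(\tu_{\le n-i}{}^{\perp}_{\dmu})_{1\le i\le n}$ as an ascending chain of left admissible subcategories. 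By Proposition \ref{pmult}(2) each such chain determines a length $n$ semi-orthogonal decomposition of $\dmu$, and the two chains correspond to the same decomposition $(\hat\tu_i)$. Feeding the right admissible chain into the inverse formula of Proposition \ref{pmult}(2) and applying Theorem \ref{tamain}(I.2) once more --- this time to the binary decomposition $(\tu_{\ge n-i+1},\tu_{n-i})$ of $\tu_{\ge n-i}$ with ambient category $\tu_{\ge n-i}^{\coprod}$ --- identifies the $i$-th piece with $\tu_{\ge n-i+1}{}^{\perp}_{\tu_{\ge n-i}^{\coprod}}$ as stated.

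Parts I.3 and II I would handle in parallel, using the observation (forced by Part I.2) that the $j$-th two-piece regrouping $(\hat\tu_{\ge j+1},\hat\tu_{\le j})$ coincides with $D_{n-j-1}^{\perp}_{\dmu}$. Consequently, applying Theorem \ref{tamain}(I.3) (respectively Theorem \ref{tamain}(II.2)) to each $D_{n-j-1}^{\perp}_{\dmu}$ shows that each of these binary regroupings restricts to $\tu_0^{\coprod}$ (respectively to $\tu_{\au}$); the equivalence of conditions (a) and (d) in Proposition \ref{pmult}(3) then upgrades this to a restriction of the full multiple decomposition, and the explicit description $D_0^{\coprod}$ in Part I.3 follows from the corresponding formula in the binary case. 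The main obstacle throughout is really only index book-keeping: in the convention of Definition \ref{dmult}(2) small indices correspond to the right-most pieces, so care is needed to match the right admissible chain of $\dmu$ with the formula for $\hat\tu_i$, and to align the reindexing $j\leftrightarrow n-j-1$ when comparing binary regroupings of $(\hat\tu_i)$ with the $D_j^{\perp}_{\dmu}$.
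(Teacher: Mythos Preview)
Your proposal is correct and follows essentially the same route as the paper: both reduce the multiple statement to the binary Theorem \ref{tamain} via the dictionary of Proposition \ref{pmult}, applying the binary result to the two-term regroupings $(\tu_{\ge j+1},\tu_{\le j})$ and then reassembling. The only cosmetic differences are that for I.1 the paper simply says the argument of Theorem \ref{tamain}(I.1) carries over directly (using Lemma \ref{lbsn}(1,2) in the multiple setting) rather than inducting on $n$, and for I.3 the paper again points to that direct argument together with Proposition \ref{pmult}(3), whereas you route through Theorem \ref{tamain}(I.3) applied to each binary regrouping; your handling of Part II matches the paper's exactly.
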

\begin{proof}
I.1. The proof of Theorem \ref{tamain}(I.1) 
 extends to this setting straightforwardly. 

2. 
Combining  
 the previous assertion with 
Proposition \ref{pmult}(1) we obtain that $(\tu_{\ge i}^{\coprod},\tu_{\le i-1}^{\coprod})$ is a semi-orthogonal decomposition of $\dmu$ whenever $1\le i\le n$. Combining Theorem \ref{tamain}(I.2) with Proposition \ref{pdec} we obtain that the subcategory $(\tu_{\ge i}^{\coprod})^{\perp}_{\dmu}=\tu_{\le i-1}^{\coprod}$ is left admissible in $\dmu$. 
Applying Proposition \ref{pmult}(2) 
 we deduce that the family $((\tu_{\ge n-i+1}^{\coprod})^{\perp}_{\tu_{\ge n-i}^{\coprod}})=(\hat\tu_i)$ gives a semi-orthogonal decomposition of $\dmu$ indeed.

Lastly, the corresponding "left admissible chain" equals $(\tu_{\le n-i}^{\coprod})^{\perp}_{\dmu}= ((\tu_{\le n-i})^{\perp}_{\dmu})$; see Proposition \ref{pmult}(2).


3. The proof of Theorem \ref{tamain}(I.1) carries over to this setting easily (as well) if one applies Proposition \ref{pmult}(3). 

II. According to Proposition \ref{pmult}(3), it suffices to verify that the corresponding decompositions $(\tu_{\le i-1}^{\coprod}, \tu_{\le i-1}{}^{\perp}_{\dmu})$ of $\dmu$ (see assertion I.2) restrict to $\dmu_{\au}$ for $1\le i\le n$. The latter statement immediately follows from Theorem \ref{tamain}(II.2).

\end{proof}

\begin{rema}\label{rmult}
One can also describe the family $(\hat\tu_i)$  as follows: $$\hat \tu_i=\{M\in \obj \dmu:\ \tu_j\perp M\ \forall j,\ 0\le j\le n,\ j\neq 
n- i\}.$$ 
\end{rema}

\begin{coro}\label{cmult}
The easily formulated "multiple decomposition" versions of Theorem \ref{tone} and  Corollary \ref{cdual} (one should just use the correspondence $(\tu_i)\mapsto (\hat\tu_i)$ instead of $D\mapsto D^{\perp}_{\dmu}$ in them) are valid. 

\end{coro}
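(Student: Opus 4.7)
The plan is to show that everything goes through by passing from multiple decompositions to the chain structure encoded in Proposition \ref{pmult}(2) and then quoting the binary results, with Theorem \ref{tamult} providing the crucial abstract existence statement in place of Theorem \ref{tamain}.

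First I would address the analogue of Theorem \ref{tone}(I.1): given a length $n$ decomposition $(\tu_i)$ of $\tux$, apply Theorem \ref{tamult}(I.1,I.2) with $\tu = \tux$ and $\dmu = \dux$ (using Theorem \ref{tcond}(I.1) to guarantee the compact generation hypothesis) to obtain that $(\tu_i^{\coprod})$ and $(\hat\tu_i)$ are length $n$ semi-orthogonal decompositions of $\dux$. Next, by Theorem \ref{tamult}(II) applied with $\au = \au^b$ (respectively $\au^-$, and if $X$ is weakly projective also $\au^u$ and $\au^+$), combined with Theorem \ref{tcond}(I.2,I.3), the decomposition $(\hat\tu_i)$ restricts to $\tub$, $\dmu^-$ (and to $\tup$, $\dmu^+$ under the projectivity hypothesis). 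The identification $(\hat\tu_i)\cap \tub = $ the \emph{multiple version} of $D^{\perp}_{\tub}$ follows from the explicit description of $\hat\tu_i$ in Remark \ref{rmult} together with Proposition \ref{pdec}(1) applied to each binary sub-decomposition $(\tu_{\ge j}, \tu_{\le j-1})$ from Proposition \ref{pmult}(1). Finally, the coproduct identity $(\hat\tu_i)$ restricts to $(\hat\tu_i\cap\tub)^{\coprod}$ on $\dux$ is Theorem \ref{tamult}(I.3).

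For the injectivity and order-equivalence (the analogue of Theorem \ref{tone}(I.2)), I would use Proposition \ref{pmult}(2) to translate length $n$ decompositions of $\tux$ into ascending chains $\ro_1 \subset \cdots \subset \ro_n$ of left admissible subcategories and similarly on each target. Under this translation the correspondence $(\tu_i) \mapsto (\hat\tu_i)$ becomes the componentwise application of $\lo \mapsto \lo^{\perp}_{?}$ on chains, so injectivity and the $\llo$-equivalences reduce chain-by-chain to the already proved binary versions from Theorem \ref{tone}(I.2).

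For the analogue of Theorem \ref{tone}(II.1) I would argue in the same spirit. Given a length $n$ decomposition $E = (E_i)$ of $\tub$, form the descending chain of right admissible subcategories $E_{\le i}$ of $\tub$; by the binary statement of Theorem \ref{tone}(II.1), each term extends uniquely to a right admissible subcategory $\perpp E_{\le i}$ of $\tux$, and these still form a chain. Reassembling via Proposition \ref{pmult}(2) produces a length $n$ decomposition $\perpp E$ of $\tux$ whose image $(\hat{\perpp E_i})$ recovers $E$ on $\tub$ and extends it to $\dux$, $\dmu^-$ (and $\tup$, $\dmu^+$ under projectivity) by the previous paragraph. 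The bijection of the analogue of Theorem \ref{tone}(II.2) follows identically from the chain translation.

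For the multiple version of Corollary \ref{cdual}, I would exploit that the Grothendieck duality $D_X$ is an exact involution (Proposition \ref{pdual}(1,2)) that sends a length $n$ decomposition $(\tu_i)$ of any of our categories to the reversed decomposition $(D_X(\tu_{n-i}))$ of the dual category; this is Proposition \ref{pdual}(4) applied componentwise, using Proposition \ref{pmult}(1) to decompose $(\tu_i)$ into binary pieces. Combining this with the newly established multiple analogue of Theorem \ref{tone} then yields the multiple versions of parts I, II, and III of Corollary \ref{cdual} by the same manipulations used in the binary proof. The main obstacle, in my view, is purely bookkeeping: ensuring that the chain-translation under Proposition \ref{pmult}(2) is compatible with the explicit orthogonal constructions $\hat\tu_i$ and with the coproduct closures $\tu_i^{\coprod}$ at every intermediate category; once this is laid out carefully, no new ideas beyond the binary arguments are required.
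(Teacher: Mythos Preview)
Your proposal is correct and follows essentially the same approach as the paper: the paper's proof is a single sentence stating that, given Theorem \ref{tamult} and Proposition \ref{pmult}, all the arguments for Theorem \ref{tone} and Corollary \ref{cdual} carry over without difficulty, and your write-up is a careful elaboration of exactly that. Your occasional reduction to the already-established binary results via the chain translation of Proposition \ref{pmult}(2) (rather than rerunning the full argument in the multiple setting) is a harmless variant and, apart from a minor slip in labeling $E_{\le i}$ as a descending chain of right admissible subcategories (in Proposition \ref{pmult}(2) these are ascending and left admissible; the right-admissible chain is $(\tu_{\ge n+1-i})$), the bookkeeping is in order.
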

\begin{proof}
Given Theorem \ref{tamult} and Proposition \ref{pmult}, all the arguments used for the proof of 
 Theorem \ref{tone} and  Corollary \ref{cdual}  carry over to the "multiple decomposition context" without any difficulty. 
\end{proof}

\subsection{On the relation to  weight and $t$-structures}\label{swt}


Now we recall that a semi-orthogonal decomposition couple gives both a {\it weight structure} and a {\it$t$-structure}; see Proposition 3.4(4) and Remark 3.5(2) of \cite{bvt}. The (main) difference between the latter notions and Definition \ref{dsort}(2) is that we do not require the corresponding $\lo$ and $\ro$ to be the object classes of triangulated subcategories of $\tu$. Instead, we only demand that $\lo\subset \lo[1]$ and $\ro[1]\subset \ro$ for weight structures and vice versa for $t$-structures (see  Definition 3.1 and Proposition 3.2(1,2) of ibid.). 

If one uses the so-called homological conventions for weight and structures 
(see Definitions 1.1.1 and 1.2.1, and  Remarks 1.1.3(4) and  1.2.3(3)  of \cite{bvttr})  then one 
 usually passes to the couples $w=(\tu_{w\le 0},\tu_{w\ge 0})=(\lo,\ro[-1])$ and  $t=(\tu_{t\le 0},\tu_{t\ge 0})=(\ro[1],\lo)$, respectively.\footnote{Note that $(\tu_{t\le 0},\tu_{t\ge 0})$ corresponds to the couple $(\tu^{t\ge 0},\tu^{t\le 0})$ in the "cohomological" notation that was used in \cite{bbd} and \cite{neesat}.} 
 Consequently, for any object $M$ of $\tu$ and $n\in \z$ there exists an  $n$-weight decomposition 
  triangle 
$$L^w_nM\to M\to R^w_{n+1}M\to L^w_{n}M[1]$$ with $L^w_nM\in \tu_{w\le 0}[n]$ and $R^w_{n+1}M\in \tu_{w\ge 0}[n+1]$ and 
and $n$-$t$-decomposition triangle $L^t_nM\to M\to R^t_{n-1}M\to L^t_{n}[1]$ with $L^t_nM\in \tu_{t\ge 0}[n]$ and $R^t_{n-1}\in \tu_{t\ge 0}[n-1]$; see Remarks  1.1.3(1) and 1.2.3(2) of ibid. If $w$ (resp. $t$) comes from a semi-orthogonal decomposition $D$ then Proposition \ref{pdec} 
 gives canonical isomorphisms $R_DM\to L_m^wM$ and $L_DM\to R_m^wM$  (resp. $R_DM\to L_m^tM$ and $L_DM\to R_m^tM$)     for any $m\in \z$.

\begin{rema}\label{radj}
1. Now let us recall the predecessors of our Definition \ref{dsort}(3).

The notion of (left or right) adjacent weight and $t$-structures was introduced in \cite{bws}; a weight structure on $\tu$ was said to be left adjacent to $t$ on $\tu$  if $\tu_{w\ge 0}=\tu_{t\ge 0}$ (if one uses the aforementioned homological conventions). If this is the case then $\tu_{t\le -1}=\tu_{w\ge 0}\perpp $ and $\tu_{t\ge 1}=\tu_{w\le 0}\perpp $; cf. our  Definition \ref{dsort}(3).

Next, in \cite{bger} weight and $t$-structures on (distinct) triangulated categories $\tu$ and $\tu'$ endowed with a so-called {\it duality} bi-functor $\tu\opp\times \tu'\to \ab$ were considered (moreover, one can replace $\ab$ by an arbitrary abelian category here). 
  In this setting a weight structure $w$ on $\tu$ was said to be {\it (left)  orthogonal} to a $t$-structure $t$ on $\tu'$  whenever $\Phi (X,Y)=0$ if $X\in \tu_{w\le 0}$ and $Y\in \tu'_{t \ge 1}$ or if $X\in \tu_{w\ge 0}$ and $Y\in \tu'_{t \le -1}$. A simple example of a duality is given by the corresponding restriction of the bi-functor $\dmu(-,-)$ whenever $\tu$ and $\tu'$ are triangulated subcategories of $\dmu$; see Remark 5.2.2(1) of \cite{bvttr}.

  Clearly, this orthogonality condition is fulfilled (for this choice of $\Phi$) in the setting of our  Definition \ref{dsort}(3). 
  
  2.  Proposition 2.5.4(1) of \cite{bger} in our setting (and notation) says
  \begin{equation}\label{eger}
 \begin{split}  \dmu(M,L^t_nN)\cong \imm(\dmu (R^w_{n}M, N)\to \dmu(R^w_{n-1}M, N))\\
 {\text{ and }}\dmu(M,R^t_nN)\cong \imm(\dmu ( L^w_{n+1}M)\to \dmu( L^w_nM,N)) \end{split}
  \end{equation}
   for any $n\in \z$; cf. Definition 2.1.1(1) and \S2.2 of \cite{bvttr}. 
If $w$ and $t$ come from $D$ and $D'$  in the setting of Proposition \ref{port}(I) then the connecting morphisms  $R^w_{n-1}M\to R^w_{n}M$ and $ L^w_{n}M\to L^w_{n+1}M$  in 
 (\ref{eger}) are just $\id_{R_D(M)}$ and $\id_{L_D(M)}$, respectively. This concludes the proof of  Proposition \ref{port}(I).
\end{rema}

Now we explain the relation of the current paper to (\S\S4--5 of) \cite{bvttr} whose main results inspired the current texts. In \S4 of ibid. it was demonstrated that in the settings closely related to the ones above one can construct (left or right) orthogonal $t$-structures on $\tu'$ from certain weight structures on $\tu$; these arguments are closer to that of the current paper than the "converse ones" of \cite[\S5]{bvttr} (see below).

However, the results of 
 \cite[\S4]{bvttr} suffer from two disadvantages. Firstly,  the author does not know of any "general"  methods for constructing  weight structures on $\tu$ of this sort (in contrast to the "smashing setting" treated in \S3 of ibid.; yet cf. Remark 4.2.2(5) of ibid.).  

Secondly, to compute  $\dmu(M[m],L^t_nN)$ using (\ref{eger}) for all $m\in \z$ one needs certain values of $\dmu(L^w_jM,-)$ for all $j\in \z$. For this reason one requires a certain "stabilization" of $L^w_jM$ (which is equivalent to the stabilization of  $R^w_{j+1}M$)  for $j\ll 0$ or $j\gg 0$; 
  cf. Theorem 4.1.2(I.1, II.1). Note here that in loc. cit. it was assumed that $w$ is {\it bounded} ({\it above, below}, or both; see Definition 1.2.2(7) of ibid.); this corresponds to the vanishing of certain ("candidates for"; see Remark 1.2.3(2) of ibid.) $L^w_jM$ and $R^w_jM$.  
It appears that these conditions can be weakened. Instead, it suffices to have certain functors $M\mapsto "L_{\ll 0}M"$ and $M\mapsto "R_{\gg 0}M"$ that correspond to certain semi-orthogonal decompositions; thus, our Theorem \ref{tamain} can probably be "mixed" with  Theorem 4.1.2 of ibid. Possibly, the author will study this question in more detail later. However, currently there is little hope to obtain "interesting geometric" examples for it that do not correspond to semi-orthogonal decompositions (and that are not treated in this paper, respectively).

Next, the "inverse correspondence $t\to w$" was studied in Theorem 5.3.1(II,IV) of ibid. The main disadvantage of all results of this sort  is that they require enough projectives in the abelian category $\hrt=\tu'_{t\le 0}\cap \tu'_{t\ge 0}$ (at least, in the case $\tu\subset \tu'$); certain boundedness assumptions are also needed. 
   This makes the construction of examples quite difficult (as well).







\end{document}